\newtheorem{theorem}{Theorem}[section]
\newtheorem{lemma}[theorem]{Lemma}
\newtheorem{definition}[theorem]{Definition}
\newtheorem{corollary}{Corollary}[theorem]
\theoremstyle{definition}
\numberwithin{equation}{section}
\newcommand{\rw}{\rightarrow}
\newcommand{\Rr}{\mathbb{R}}
\newcommand{\C}{\mathbb{C}}
\newcommand{\Zr}{\mathbb{Z}}
\newcommand{\Nr}{\mathbb{N}}
\newcommand{\eps}{\epsilon}
\newcommand{\la}{\lambda}
\newcommand{\vth}{\vartheta}
\newcommand{\nth}[1]{\left[#1\right]_n}
\newcommand{\set}[1]{\left\{#1\right\}}
\newcommand{\E}[1]{\mathbb{E}\left[#1\right]}
\newcommand{\En}[1]{\mathbb{E}_{n}\left[#1\right]}
\newcommand{\EAn}[1]{\mathbb{E}_{\mathcal{A}_n}\left[#1\right]}
\newcommand{\Pb}[1]{\mathbb{P}\left[#1\right]}
\newcommand{\slan}{\sum_{\la \vdash n}\frac{1}{z_\la}}
\newcommand{\sla}{\sum_{\la}\frac{1}{z_\la}}
\begin{document}

\title[Randomized class functions on the symmetric groups]{On averages of randomized class functions on the symmetric groups and their asymptotics}


\author{Paul-Olivier Dehaye and Dirk Zeindler}

\address{%
Department of Mathematics, ETH Z\"urich, R\"amistrasse 101, 8092 Z\"urich, Switzerland
and
Department of Mathematics, Universit\"at Z\"urich, Winterthurerstrasse 190, 8057 Z\"urich, Switzerland
}

\begin{abstract}
The second author had previously obtained explicit generating functions for moments of characteristic polynomials of permutation matrices ($n$ points).
In this paper, we generalize many aspects of this situation.
We introduce random shifts of the eigenvalues of the permutation matrices, in two different ways: independently or not for each subset of eigenvalues associated to the same cycle.
We also consider vastly more general functions than the characteristic polynomial of a permutation matrix, by first finding an equivalent definition in terms of cycle-type of the permutation.
We consider other groups than the symmetric group, for instance the alternating group and other Weyl groups.
Finally, we compute some asymptotics results when $n$ tends to infinity.
This last result requires additional ideas: it exploits properties of the Feller coupling, which gives asymptotics for the lengths of cycles in permutations of many points.
\end{abstract}

\maketitle

\tableofcontents

\section{Introduction}
The study of spectral properties of random matrices has gained importance in many areas of mathematics and physics. In particular, the study of the spectrum or the characteristic polynomial of a random matrix in a compact Lie group has proved central in obtaining conjectures in number theory (see, for instance, the book~\cite{miller} and many papers in its reference list). 

Our initial motivation for this work was to consider another ensemble of random matrices, this time discrete: the group of permutation matrices on $n$ points. Since the spectrum of a permutation matrix is completely determined by the cycle structure of the corresponding permutation, for which many results are known, this setting allows for more precise results. Wieand has already studied \cite{WieandThesis,WieandPaper1} the fluctuation of the number of eigenvalues in an arc of the unit circle. Hambly, Keevash, O'Connell and Stark \cite{HKOS} have obtained a central limit theorem for the asymptotic value (in $n$) of the characteristic polynomial of a permutation matrix, while Zeindler \cite{DZ} obtained explicit generating functions for those values, even when evaluated inside the unit circle. The present paper simplifies some of the proofs given in the latter paper, but more importantly extends the results in many different ways: 
\begin{itemize}
\item We completely break away from the interpretation in terms of permutation matrices, and instead consider the permutations (and their cycle-type decompositions) themselves. This allows us to reexpress the characteristic polynomial associated to a permutation $\sigma$. It is now given, as a function of $x$, as the product over the cycles of $\sigma$ of $(1-x^l)$, where $l$ is the length of the cycle. This break actually happens from the start in the paper, but thinking in terms of matrices is helpful to find the natural generalizations to consider. 
\item We randomize the permutation matrices by replacing the 1s in the matrices with iid variables on the unit circle. This rotates the eigenvalues of the permutation matrix by random iid angles. We investigate both natural ways to do this: one can shift every eigenvalue independently of the other, or only shift independently each block of $l$ evenly spaced eigenvalues (each such block corresponds to a $l$-cycle present in the permutation). This is studied concurrently by Nikeghbali and Najnudel \cite{NN}. 
\item Instead of only considering the characteristic polynomial of a permutation matrix associated to the permutation $\sigma$, which is associated to a product of $(1-x^l)$ as explained above, we can replace this by a more complicated~$f(x^l)$. This defines what we call the \emph{class function associated to $f$}. We first manage to replace $f$ with a polynomial in $x$, then with a holomorphic function. We also pass onto the multivariable case $x_i$. We only present the results in two variables however, as this is necessary and sufficient to indicate the full generalization of the results and proofs. A very special case of this construction is given by the Ewens measure, which corresponds to rescalings of $f$ (see \cite{barbour}). 
\item We change the group considered, from $\mathcal{S}_n$ to $\mathcal{A}_n$ and groups closely related that appear as Weyl groups of compact Lie groups.
\end{itemize}
We give explicit generating functions for the moments over $\mathcal{S}_n$ ($\mathcal{A}_n$, etc), summing over $n$ (Theorem~\ref{thm_gen_fn_poly} for polynomial $f$ and theorem~\ref{thm_gen_holo} for holomorphic $f$).  The combinatorics always relies heavily on lemmas~\ref{lem_gen_poly} and the more general \ref{lem_gen_allg}. 

In the $\mathcal{S}_n$ case, when $|x_i|<1$, we also succeed in computing asymptotics for $n \rightarrow \infty$. The computation in the case of holomorphic $f$ (theorem~\ref{thm_conv_x<1_holo}) is actually much more difficult than for polynomial $f$ (theorem~\ref{thm_conv_x<1_poly}), and requires the introduction of the Feller coupling, a probabilistic result giving asymptotic distribution of cycle lengths of permutations in symmetric groups. 

This paper is structured as follows. In section~\ref{sec_definitions}, we introduce the combinatorial definitions needed to work with symmetric groups. In section~\ref{sec_intro_to_gen_fn}, we present the main combinatorial lemmas, which will allows us to pass from sums over conjugacy classes to products. In section~\ref{sec_wreath_prod_poly}, we start with characteristic polynomials of permutation matrices and generalize these objects to products over eigenvalues of polynomial functions and then introduce more randomness in the picture. We give some examples, and finish by computing some asymptotics for $n \rightarrow \infty$. In section~\ref{sec_extension_to_holo}, we extend definitions and results to the case of holomorphic functions. Note however that the problem of asymptotics becomes much more complicated and is thus relegated to section~\ref{sec_asym_for_holo}. That section includes the definition of the Feller coupling (in \ref{sec_feller2}) and  makes an ansatz for the asymptotic, for which we compute the moments (in \ref{sec_the_limit}).  In \ref{sec_conv_for_x<1_holo}, we prove the convergence result. We finish the paper with results about other groups than $\mathcal{S}_n$ in section~\ref{sec_other_groups}.

\section{Definitions and Notation}
\label{sec_definitions}
We introduce here notation focused on the combinatorics of the symmetric group. We leave the definitions associated to the Feller coupling for section~\ref{sec_feller2}, and to other groups for section~\ref{sec_other_groups}.
\begin{definition}
\label{def_Sn}
The \emph{permutation group $\mathcal{S}_n$} is defined to be the set of all permutations of the set $\set{1,2,\cdots,n}$.
\end{definition}
\begin{definition}
\label{def_haar_allg}
Let $G$ be a finite group. The \emph{Haar-measure} $\mu_{G}$ on $G$ is defined as
$$\mu_{G}(A):=\frac{|A|}{|G|}\text{ for every }A\subset G.$$
In the special case of $G=\mathcal{S}_n$, we write
$\En{f} := \frac{1}{n!}\sum_{\sigma  \in \mathcal{S}_n} f(\sigma)$ for the expectation on $\mathcal{S}_n$ with respect to the Haar-measure $\mu_{\mathcal{S}_n}$.
\end{definition}
We will only work with class functions on $\mathcal{S}_n$ (i.e.~$f(hgh^{-1})=f(g)$). We therefore reformulate $\En{f}$ by first parametrizing the conjugation classes of $\mathcal{S}_n$ with partitions of $n$.
\begin{definition}
 \label{def_part}
A \emph{partition} $\la$ is a  sequence of non-negative integers $\la_1 \ge \la_2 \ge \cdots$ eventually trailing to 0s, which we usually omit. The \emph{length} of $\la$ is the largest $l$ such that $\la_l \ne 0$. We define the \emph{size} $|\lambda|:= \sum_i \lambda_i$ and we set for $n\in \Nr$
$$\la\vdash n:=\set{\la\text{ partition };|\la|=n}.$$
\end{definition}
Let $\sigma\in \mathcal{S}_n$ be arbitrary. We can write $\sigma=\sigma_1\cdots \sigma_l$ with $\sigma_i$ disjoint cycles of length $\la_i$. Since disjoint cycles commute, we can assume that $\la_1\geq\la_2\geq\cdots \geq\la_l$.
We call the partition $\la=(\la_1,\la_2,\cdots,\la_l)$ the \emph{cycle-type} of $\sigma$ and write $\mathcal{C}_\la$ for the subset of $\mathcal{S}_n$ with cycle-type $\la=(\la_1,\la_2,\cdots,\la_l)$.

%
%
%
Two elements $\sigma,\tau\in \mathcal{S}_n$ are conjugate if and only if $\sigma$ and $\tau$ have the same cycle-type, and so the sets $\mathcal{C}_\la$ with $|\la|=n$ are the conjugation classes of $\mathcal{S}_n$ (see, as for much of this material on symmetric groups, \cite{macdonald} for instance).
The cardinality of each $C_\la$ is given by
$$ |\mathcal{C}_\la|=\frac{|\mathcal{S}_n|}{z_\la} \text{ with } z_\la:=\prod_{r=1}^{n} r^{c_r}c_r!$$
%
%
We put all this information together and get
\begin{lemma}
Let $f:\mathcal{S}_n\rw \C$ be a class function. Then
\begin{align}\label{E_f_class_n}
\En{f}
=
\slan f(\la).
\end{align}
\end{lemma}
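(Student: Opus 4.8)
The plan is to evaluate the Haar-measure expectation $\En{f} = \frac{1}{n!}\sum_{\sigma \in \mathcal{S}_n} f(\sigma)$ by partitioning the sum over $\mathcal{S}_n$ according to conjugacy classes, and then using that $f$ is constant on each class. Concretely, I would first recall from the preceding discussion that the conjugacy classes of $\mathcal{S}_n$ are exactly the sets $\mathcal{C}_\la$ for $\la \vdash n$, so that $\mathcal{S}_n = \bigsqcup_{\la \vdash n} \mathcal{C}_\la$ is a disjoint union. Hence
\begin{align*}
\En{f} = \frac{1}{n!}\sum_{\la \vdash n} \; \sum_{\sigma \in \mathcal{C}_\la} f(\sigma).
\end{align*}

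Next, since $f$ is a class function, $f(\sigma)$ takes the same value for every $\sigma \in \mathcal{C}_\la$; I would denote this common value by $f(\la)$ (matching the notation used in the statement). Therefore the inner sum is simply $|\mathcal{C}_\la| \, f(\la)$, giving
\begin{align*}
\En{f} = \frac{1}{n!}\sum_{\la \vdash n} |\mathcal{C}_\la| \, f(\la).
\end{align*}
Then I would substitute the cardinality formula $|\mathcal{C}_\la| = \frac{|\mathcal{S}_n|}{z_\la} = \frac{n!}{z_\la}$ recalled just above the lemma, so that the factor $n!$ cancels and one obtains $\En{f} = \sum_{\la \vdash n}\frac{1}{z_\la} f(\la)$, which is exactly \eqref{E_f_class_n} with the shorthand $\slan$.

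This is entirely routine and there is no real obstacle; the only points requiring care are bookkeeping ones: making sure the notation $f(\la)$ is legitimate (which is precisely where the class-function hypothesis is used — without it, $f(\sigma)$ would depend on $\sigma$ and not just on its cycle-type $\la$), and correctly invoking the two facts imported from the surrounding text, namely that the $\mathcal{C}_\la$ exhaust and partition $\mathcal{S}_n$ and that $|\mathcal{C}_\la| = n!/z_\la$. Once those are in place the computation is just a two-line substitution.
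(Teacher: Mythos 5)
Your proof is correct and is exactly the argument the paper intends: it simply assembles the two facts stated just before the lemma (the $\mathcal{C}_\la$ with $\la\vdash n$ partition $\mathcal{S}_n$ into conjugacy classes, and $|\mathcal{C}_\la|=n!/z_\la$), which the paper summarizes with "We put all this information together." Nothing is missing.
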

%
%
We now give a construction for class functions, which we will generalize later but which already covers interesting examples.
\begin{definition}
\label{def_associated_class_fn}
Let $f(x) = \sum_{k=0} b_k x^k$ be a polynomial. Then, for a partition $\la$, set
\begin{align}
\label{eq_class_fn}
 f_\la(x)  :=  \prod_{m=1}^{l(\la)}f(x^{\la_m}).
\end{align}
This defines a class function on all $\mathcal{S}_n$ (since it only depends on the cycle-type).
We call this function the \emph{class-function associated} to $f$.
\end{definition}
We will see in section~\ref{sec_char_poly} that this includes the example of the characteristic polynomial of a permutation matrix in $\mathcal{S}_n$, when $f(x)=1-x$.
The polynomial $f$ could be taken to be a holomorphic function instead, as will be done in section~\ref{sec_extension_to_holo}. We could also introduce random variables instead of a complex number $x$, as in section~\ref{sec_Wprod_Zn}.
Definition~\ref{def_associated_class_fn} can be generalized to the case of several variables $x_1, \cdots, x_p$, with
$$
f(x_1,\cdots,x_p) = \sum_{i_1,\cdots,i_p =0} b_{i_1,\cdots,i_p} x_1^{i_1} \cdots x_p^{i_p}
$$
and
$$
f_\la(x_1,\cdots,x_p) = \prod_{m=1}^{l(\la)}  f(x_1^{\la_m},\cdots,x_p^{\la_m} ).
$$
For simplicity, we later restrict the statements and proofs to the case $p=2$.
\section{Lemmas for Generating Functions}
\label{sec_intro_to_gen_fn}
We define in this section generating functions and give two lemmas.
\begin{definition}
\label{def_gen_fn}
Let $(h_n)_{n\in\Nr}$ with $h_n\in\C$ be given.
Then the formal power series $h(t):=\sum_{n\in\Nr} h_n t^n$ is called \emph{the generating function} of the sequence $(h_n)$.
\end{definition}
\begin{definition}
\label{def_nth}
Let $h(t)=\sum_{n=0}^\infty h_n t^n$ be given. We define
$\nth{h(t)}:=h_n$.
\end{definition}
Note that we will first only need the formal aspects of these generating functions, up to the point where we use for the function  $f$ a holomorphic function (section~\ref{sec_extension_to_holo}) instead of a polynomial. After that, analytic properties in the variable $t$ will start to play a role, and we will have to be careful with radii of convergence of those power series.\\
We use here two tools for writing down generating functions.
The first is
\begin{lemma}
\label{lem_symm_fn}
Let $(a_m)_{m\in\Nr}$ be a sequence of complex numbers. Define
$$a_\la:=\prod_{m=1}^{l(\la)} a_{\la_m}.$$
Then,
\begin{align}
\label{eq_lem_symm_fn}
\sum_{\la} \frac{1}{z_\la} a_\la t^{|\la|}=\exp\left(\sum_{m=1}\frac{1}{m} a_m t^m\right).
\end{align}
Moreover, if the RHS or the LHS of \eqref{eq_lem_symm_fn} is absolutely convergent then so is the other.
\end{lemma}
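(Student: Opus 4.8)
The statement is the classical exponential formula for the cycle-index / power-sum symmetric functions, so the plan is to reduce it to a factorization over cycle lengths. First I would rewrite the sum on the left by grouping partitions according to their multiplicities: a partition $\la$ is the same data as a sequence $(c_m)_{m\ge 1}$ of non-negative integers, almost all zero, where $c_m$ is the number of parts of $\la$ equal to $m$; then $|\la|=\sum_m m c_m$, $a_\la=\prod_m a_m^{c_m}$, and $z_\la=\prod_m m^{c_m} c_m!$. Substituting these into the LHS turns the single sum over partitions into a sum over all such sequences $(c_m)$, and the summand factors as $\prod_m \frac{1}{m^{c_m} c_m!} a_m^{c_m} t^{m c_m}$.

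Next I would invoke the standard fact that a sum over all finitely-supported sequences of a product indexed by $m$ equals the product over $m$ of the corresponding one-variable sums, i.e.
\begin{align*}
\sum_{(c_m)} \prod_{m=1}^{\infty} \frac{1}{m^{c_m} c_m!}\bigl(a_m t^m\bigr)^{c_m}
=
\prod_{m=1}^{\infty} \sum_{c=0}^{\infty} \frac{1}{c!}\left(\frac{a_m t^m}{m}\right)^{c}
=
\prod_{m=1}^{\infty} \exp\left(\frac{a_m t^m}{m}\right),
\end{align*}
and then combine the product of exponentials into $\exp\bigl(\sum_{m\ge1} \frac{1}{m} a_m t^m\bigr)$, which is the RHS. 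At the level of formal power series in $t$ this interchange is legitimate because, for each fixed power $t^N$, only finitely many sequences $(c_m)$ contribute (those with $\sum m c_m = N$), so the manipulation is really an identity of polynomials in each graded piece; I would state this explicitly to justify working formally.

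For the final sentence about absolute convergence, I would argue as follows. If the RHS is absolutely convergent, then in particular $\sum_m \frac{1}{m}\abs{a_m}\abs{t}^m < \infty$, and repeating the computation above with absolute values everywhere — each step being an identity or an inequality valid for series of non-negative terms — shows $\sum_\la \frac{1}{z_\la}\abs{a_\la}\abs{t}^{|\la|} = \exp\bigl(\sum_m \frac1m \abs{a_m}\abs{t}^m\bigr) < \infty$, giving absolute convergence of the LHS. Conversely, if the LHS converges absolutely, then summing only over partitions with a single part ($\la=(m)$) gives $\sum_m \frac1m \abs{a_m}\abs{t}^m \le \sum_\la \frac{1}{z_\la}\abs{a_\la}\abs{t}^{|\la|} < \infty$, so the inner series of the RHS converges absolutely and hence so does $\exp$ of it. The main obstacle is not any single hard estimate but rather being careful about the two regimes (formal power series versus genuine analytic convergence) and making sure the rearrangement of the double sum into an infinite product is justified in each — I would handle the formal case by the graded-finiteness remark and the analytic case by Tonelli-type reasoning on non-negative terms.
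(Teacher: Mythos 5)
Your proof is correct and is essentially the argument the paper has in mind: the paper's proof simply says the identity "can be found in \cite{macdonald} or directly verified using the definitions of $z_\la$ and the exponential series," and your multiplicity-coordinates computation $z_\la=\prod_m m^{c_m}c_m!$ followed by factoring into $\prod_m\exp(a_mt^m/m)$ is exactly that direct verification, with the formal/graded justification made explicit. Your handling of the absolute-convergence claim (Tonelli-type rearrangement of non-negative terms plus the single-part lower bound) is a fine substitute for the paper's appeal to dominated convergence and needs no changes.
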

\begin{proof}
The first part can be found in \cite{macdonald} or directly verified using the definitions of $z_\la$ and the exponential series. The second statement follows from applying the dominated convergence theorem at each relevant $t$.
\end{proof}

The second tool is
\begin{lemma}
\label{lem_gen_poly}
Let $f(x_1,x_2)$ be a polynomial with
$$f(x_1,x_2)=\sum_{k_1,k_2=0}^\infty b_{k_1, k_2} x_1^{k_1} x_2^{k_2}$$ and $f_\la$ its associated  class function.  We have for $|t| <1$ and $|x_i| \leq 1$

\begin{align}\label{eq_gen_poly}
\sla f_\la(x_1,x_2) t^n
=\prod_{k_1=0}^\infty \prod_{k_2=0}^\infty(1-x_1^{k_1}x_2^{k_2}t)^{-b_{k_1,k_2}},
\end{align}
and both sides of \eqref{eq_gen_poly} are holomorphic. We use the principal branch of logarithm to define $z^s$ for $z\in \C\setminus{\Rr_-}$.
\end{lemma}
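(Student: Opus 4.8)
The plan is to compute the left-hand side of \eqref{eq_gen_poly} by combining the two preceding lemmas, and then to deal with convergence as a second, separate step. First I would fix the variables $x_1, x_2$ with $|x_i|\le 1$ and treat the sum over $\la$ as a formal power series in $t$. Writing $f_\la(x_1,x_2) = \prod_{m=1}^{l(\la)} f(x_1^{\la_m}, x_2^{\la_m})$, we are exactly in the setting of Lemma~\ref{lem_symm_fn} with the choice $a_m := f(x_1^m, x_2^m)$, so that $a_\la = f_\la(x_1,x_2)$. Applying \eqref{eq_lem_symm_fn} gives
\begin{align*}
\sla f_\la(x_1,x_2) t^{|\la|}
= \exp\left(\sum_{m=1}^\infty \frac{1}{m} f(x_1^m,x_2^m) t^m\right).
\end{align*}
(Here $\sla f_\la t^{|\la|}$ is the same as $\sum_n \left(\slan f_\la\right) t^n$ after grouping by size, which is the quantity in \eqref{eq_gen_poly}.)

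Next I would expand $f(x_1^m, x_2^m) = \sum_{k_1,k_2} b_{k_1,k_2} x_1^{m k_1} x_2^{m k_2}$ inside the exponent and interchange the finite sum over $(k_1,k_2)$ with the sum over $m$:
\begin{align*}
\sum_{m=1}^\infty \frac{1}{m} f(x_1^m,x_2^m) t^m
= \sum_{k_1,k_2} b_{k_1,k_2} \sum_{m=1}^\infty \frac{1}{m} \left(x_1^{k_1} x_2^{k_2} t\right)^m
= -\sum_{k_1,k_2} b_{k_1,k_2} \Log\left(1 - x_1^{k_1} x_2^{k_2} t\right),
\end{align*}
using the Taylor series $-\Log(1-z) = \sum_{m\ge 1} z^m/m$, valid for $|z|<1$, which holds here since $|x_1^{k_1} x_2^{k_2} t| \le |t| < 1$. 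Exponentiating term by term converts the sum in the exponent into the product $\prod_{k_1,k_2} (1 - x_1^{k_1} x_2^{k_2} t)^{-b_{k_1,k_2}}$, with the powers defined via the principal branch as stated; since $f$ is a polynomial, only finitely many $b_{k_1,k_2}$ are nonzero, so this is a finite product and there is no subtlety in the rearrangement or in defining the non-integer powers (the base $1 - x_1^{k_1} x_2^{k_2} t$ stays away from $\Rr_-$ for $|t|<1$, $|x_i|\le1$).

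For the convergence and holomorphy claims, the right-hand side is a finite product of functions $(1 - x_1^{k_1} x_2^{k_2} t)^{-b_{k_1,k_2}}$, each of which is holomorphic in $(t,x_1,x_2)$ on the polydisc where $|x_1^{k_1} x_2^{k_2} t| < 1$, in particular on $\{|t|<1, |x_i|\le 1\}$ (a neighborhood thereof in $t$); so the product is holomorphic there. For the left-hand side, I would invoke the ``moreover'' clause of Lemma~\ref{lem_symm_fn}: absolute convergence of the RHS (which is immediate, being a finite product of convergent series) transfers to absolute convergence of the LHS $\sla f_\la t^{|\la|}$, and a power series that converges absolutely on $|t|<1$ defines a holomorphic function there; uniform convergence on compact subsets of the polydisc handles joint holomorphy in $(t,x_1,x_2)$. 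The main thing to be careful about — though it is not really an obstacle in the polynomial case — is the bookkeeping with the principal branch of the logarithm: one must check that each factor $1 - x_1^{k_1}x_2^{k_2}t$ lies in $\C\setminus\Rr_-$ so that $(\cdot)^{-b_{k_1,k_2}}$ is unambiguously defined and so that $\exp(-b_{k_1,k_2}\Log(\cdot)) = (\cdot)^{-b_{k_1,k_2}}$; this is where the hypothesis $|t|<1$ (rather than $|t|\le 1$) is used, and it is the one place where, in the later holomorphic generalization, genuine care with infinite products and radii of convergence will be needed.
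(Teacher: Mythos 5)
Your proposal is correct and follows essentially the same route as the paper's proof: apply Lemma~\ref{lem_symm_fn} with $a_m = f(x_1^m,x_2^m)$, exchange the (finite) sum over $(k_1,k_2)$ with the sum over $m$, recognize $-\log(1-x_1^{k_1}x_2^{k_2}t)$, and exponentiate to obtain the product, with the same observation that finiteness of the set of nonzero $b_{k_1,k_2}$ makes the interchange and the branch bookkeeping immediate. Your added remarks on holomorphy via the ``moreover'' clause of Lemma~\ref{lem_symm_fn} are consistent with what the paper leaves implicit.
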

%
%
\begin{proof}
We use lemma~\ref{lem_symm_fn} with $a_m=f(x_1^m,x_2^m)$ and get
\begin{align*}
\sum_{\la} \frac{1}{z_\la} \prod_{i=1}^{l(\la)}f_\la(x_1,x_2)t^n
&=
\exp\left(\sum_{m=1}\frac{1}{m} f(x_1^m,x_2^m) t^m \right)
=\exp\left(\sum_{k_1,k_2=0}^\infty b_{k_1,k_2} \sum_{m=1}\frac{t^m}{m} (x_1^{k_1}x_2^{k_2})^m\right)\\
&=\exp\left(\sum_{k_1,k_2=0}^\infty b_{k_1,k_2} (-1)\log(1-x_1^{k_1}x_2^{k_2} t)\right)
=\prod_{k_1,k_2=0}^\infty (1-x_1^{k_1}x_2^{k_2}t)^{-b_{k_1,k_2}}.
\end{align*}
Unlike later, the exchange of the sums in the second equality is immediate as only finitely many $b_{k_1,k_2}\neq 0$.
\end{proof}
\section{Randomized Class Functions associated to Polynomials}
\label{sec_wreath_prod_poly}
We want to randomize the construction of associated class function. To motivate our definitions, we base ourselves on the special case of characteristic polynomials of permutation matrices.
\subsection{The characteristic polynomial of $\mathcal{S}_n$}
\label{sec_char_poly}
We first identify $\mathcal{S}_n$ with the subgroup of permutation matrices of the unitary group $U(n)$  as follows:
\begin{align*}
\mathcal{S}_n    &\rw  U(n)\\
\sigma &\rw  (\delta_{i,\sigma(j)})_{1\leq i,j\leq n}
\end{align*}
It is easy to see that this map is an injective group homomorphism. In this section, we use the notation $g\in \mathcal{S}_n$ for matrices and $\sigma\in \mathcal{S}_n$ for permutations. The identification allows to define the characteristic polynomial of elements of $\mathcal{S}_n$ as
\begin{align}\label{eq_def_zn_det}
Z_n(x)=Z_n(x)(g):=\det(I-xg)
\end{align}
for $x\in \mathbb{C}$ and $g \in \mathcal{S}_n$. Note that this is a class function, since
$$Z_n(x)(hgh^{-1})=\det(I-xhgh^{-1})=\det(h(I-xg)h^{-1})=\det{(I-xg)}=Z_n(x)(g).$$
\begin{lemma}
Let $g \in \mathcal{S}_n$ have cycle-type $(\la_1,\cdots,\la_l)$. Then,
\begin{align}
\label{eq_def_zn_la}
Z_n(x)(g)=\prod_{m=1}^l (1-x^{\la_m}).
\end{align}

\end{lemma}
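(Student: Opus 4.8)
The plan is to reduce the determinant $\det(I - xg)$ to a product over cycles by exploiting the block structure of the permutation matrix $g$ in a suitably chosen basis. First I would observe that if $\sigma$ has cycle-type $(\la_1,\dots,\la_l)$, then by relabelling the indices $\{1,\dots,n\}$ so that each cycle occupies a consecutive block of coordinates, the matrix $g$ becomes block-diagonal with one block $g_m$ of size $\la_m \times \la_m$ for each cycle $\sigma_m$. Since conjugation by a permutation matrix (the relabelling) leaves $\det(I-xg)$ unchanged — as already noted, $Z_n(x)$ is a class function — it suffices to compute the determinant of each block and multiply: $\det(I-xg) = \prod_{m=1}^l \det(I_{\la_m} - x g_m)$.

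Next I would compute $\det(I_{\la} - x C_\la)$ where $C_\la$ is the $\la \times \la$ cyclic permutation matrix corresponding to an $\la$-cycle. This is the routine core of the argument: one can either expand the determinant directly (cofactor expansion along the first row gives two nonzero terms, $1$ and $\pm x^\la$, with the sign working out to yield $1 - x^\la$), or, more conceptually, note that $C_\la$ has characteristic polynomial $T^\la - 1$ since its eigenvalues are the $\la$-th roots of unity $e^{2\pi i k/\la}$ for $k = 0,\dots,\la-1$. Hence $\det(I - x C_\la) = \prod_{k=0}^{\la-1}(1 - x e^{2\pi i k/\la}) = 1 - x^\la$, using the factorization $\prod_{k=0}^{\la-1}(T - e^{2\pi i k /\la}) = T^\la - 1$ evaluated appropriately.

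Combining the two steps gives $Z_n(x)(g) = \prod_{m=1}^l (1 - x^{\la_m})$, which is the claim.

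I do not expect any genuine obstacle here; the only mild care needed is bookkeeping in the reduction to block-diagonal form (making precise that reordering the standard basis according to cycles conjugates $g$ into a direct sum of cyclic blocks) and getting the sign right in the $2\times 2$-and-up cofactor expansion if one takes the elementary route rather than the eigenvalue route. The eigenvalue computation is cleaner and I would present that one.
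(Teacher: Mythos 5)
Your proposal is correct and follows essentially the same route as the paper: reduce to the single-cycle case by the block decomposition of the permutation matrix, then compute $\det(I_{\la}-xC_\la)=1-x^{\la}$ for a cyclic block (the paper sketches exactly this and defers the explicit single-cycle computation to an earlier reference). Your eigenvalue argument for the cyclic block is a clean way of supplying the detail the paper omits.
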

\begin{proof}
The proof follows from the simple case of $\la = (\la_1)$, i.e.~ the case of a one-cycle permutation, and observing that the characteristic polynomial factors when the permutation matrix decomposes into  blocks. More explicit details can be found in~\cite{DZ}.
\end{proof}

This shows that the characteristic polynomial of permutation matrices is the class function associated to the polynomial $1-x$, as promised in section~\ref{sec_definitions}. We will now see how to introduce some randomness in this construction, for general $f$.
\subsection{Definition of randomized class functions}
\label{sec_Wprod_Zn}
 In the special case of permutation matrices, we have at least two options: we could replace all the 1s with iid variables or only introduce one new iid variable for each cycle. We describe here the two possibilities, starting with the second option.

\subsubsection{One new variable per cycle}
\begin{definition}
\label{def_W1_Zn}
Let $\theta$ be a random variable with values in $S^1$. We set for $\la\vdash n$ and $g\in C_\la$
\begin{align}\label{eq_def_W1_Zn}
W^1Z_n(x)=W_{\theta}^1 Z_n(x)(g):=\prod_{m=1}^{l(\la)}(1-\theta_m x^{\la_m}),
\end{align}
with $\theta_m \stackrel{d}{=}\theta$, $\theta_m$ iid and independent of $g$.
\end{definition}
The fact that this corresponds to introducing one new variable per cycle can be deduced by comparison with  \eqref{eq_def_zn_la}.
We use the letter $W$ because we originally thought of this as a characteristic polynomial of the wreath product $S^1 \wr \mathcal{S}_n$ in the special case $f(x) = 1-x$.
We generalize this to arbitrary polynomials in $\C[x_1,x_2]$.
\begin{definition}
\label{def_W1_poly}
Let $\theta$ and $\vth$ be random variables with values in $S^1$ and $P$ be a polynomial with
$$P(x_1,x_2)=\sum_{k_1,k_2=0}^{\infty}b_{k_1,k_2} x_1^{k_1} x_2^{k_2}. $$
We set for $g\in C_\la$
\begin{align}\label{eq_def_W1_poly_in_two}
W^1(P)(x_1,x_2)&=W_{\theta,\vth}^{1,n} (P)(x_1,x_2)(g)
:=
\prod_{m=1}^{l(\la)}P\left(\theta_m x_1^{\la_m},\vth_m x_2^{\la_m}\right)
\end{align}
with $(\theta_m,\vth_m) \stackrel{d}{=}(\theta,\vth)$, $(\theta_m,\vth_m)$ iid and independent of $g$. This defines the \emph{first randomized class function} (of the variable $g$) \emph{associated to} the polynomial~$P$.
%
%
%
We also set
\begin{align}\label{eq_def_W1_poly}
W^1(P_1,P_2)(x_1,x_2):=W^1(P)(x_1,x_2)
\end{align}
with $P(x_1,x_2):=P_1(x_1)P_2(x_2)$.
\end{definition}
We are primarily interested in class functions of the form $W^1(P_1,P_2)(x_1,x_2)$. We have introduced this more complicated definition because we need it in section~\ref{sec_asym_for_holo}.
\subsubsection{One new variable per point}
Let $D$ be a $n\times n$ diagonal matrix with iid variables $\theta_i$  on the diagonal.
\begin{definition}
\label{def_W2_Zn_det}
We set for $g\in \mathcal{S}_n$
\begin{align}\label{eq_def_W2_Zn_det}
W^2Z_n(x)=W_\theta^2 Z_n(x)=W_\theta^2 Z_n(x)(g):=\det(I-xDg).
\end{align}
\end{definition}
An explicit computation shows that
\begin{align}\label{eq_W2_Zn_part}
W^2Z_n(x)=\prod_{m=1}^{l(\la)}\left(1-x^{\la_m} \prod_{i=1}^{\la_m}\theta_i^{(m)}\right) \text{ for }g\in C_\la
\end{align}
with $\set{\theta_i^{(m)}; 1\leq m\leq l(\la), 1\leq i \leq \la_m}=\set{\theta_1,\cdots,\theta_n}$.

This again generalizes to arbitrary polynomials.
\begin{definition}
\label{def_W2_poly}
Let $\theta$ and $\vth$ be a random variables with values in $S^1$ and $P$ be polynomials with
$$P(x_1,x_2)=\sum_{k_1,k_2=0}^{\infty}b_{k_1,k_2} x_1^{k_1} x_2^{k_2} $$
We set for $g\in C_\la$
\begin{align}\label{eq_def_W2_poly_in_two}
W^2(P)(x_1,x_2)
&=
W_{\theta,\vth}^{2,n} (P)(x_1,x_2)(g)
:=
\prod_{m=1}^{l(\la)} P\left( x_1^{\la_m}\prod_{i=1}^{\la_m}\theta_i^{(m)}, x_2^{\la_m}\prod_{i=1}^{\la_m}\vth_i^{(m)}\right)
\end{align}
with $(\theta_i^{(m)},\vth_i^{(m)}) \stackrel{d}{=} (\theta,\vth)$,
$(\theta_i^{(m)},\vth_i^{(m)})$ iid and independent of $g$. This defines the \emph{second randomized class function} (of the variable $g$) \emph{associated to} the polynomial~$P$.
We also define
\begin{align}\label{eq_def_W2_poly}
W^2(P_1,P_2)(x_1,x_2):=W^2(P)(x_1,x_2),
\end{align}
with $P(x_1,x_2):=P_1(x_1)P_2(x_2)$.
\end{definition}
\subsection{Generating functions for $W^1$ and $W^2$}
\label{sec_gen_fn_poly}
We prove in this subsection
\begin{theorem}
\label{thm_gen_fn_poly}
Let $\theta$ and $\vth$ be random variables with values in $S^1$ and $P$ be a polynomial with
$$P(x_1,x_2)=\sum_{k_1,k_2=0}^{\infty}b_{k_1,k_2} x_1^{k_1} x_2^{k_2}. $$
We define
\begin{align}\label{eq_def_alpha}
\alpha_{k_1,k_2}:=\E{\theta^{k_1}\vth^{k_2}}.
\end{align}
Then,
\begin{align}\label{eq_thm_gen_W1_poly}
\En{W^1(P)(x_1,x_2)}
&=
\nth{\prod_{k_1,k_2=0}^\infty (1-x_1^{k_1} x_2^{k_2}t)^{-b_{k_1,k_2} \alpha_{k_1,k_2}} }\\
\label{eq_thm_gen_W2_poly}
\En{W^2(P)(x_1,x_2)}
&=
\nth{\prod_{k_1,k_2=0}^\infty (1-\alpha_{k_1,k_2} x_1^{k_1} x_2^{k_2}t)^{-b_{k_1,k_2}} }.
\end{align}
These identities of coefficients can be obtained by expanding formally the (finite) products, but the products in \eqref{eq_thm_gen_W1_poly}, \eqref{eq_thm_gen_W2_poly} are actually holomorphic for $|t|<1$ and $\max\set{|x_1|,|x_2|}\leq 1$.
\end{theorem}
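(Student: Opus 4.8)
The plan is to reduce both identities to Lemma~\ref{lem_gen_poly} by first averaging over the random variables $(\theta_m,\vth_m)$ (resp. $(\theta_i^{(m)},\vth_i^{(m)})$) for a fixed permutation $g$, and only then summing over conjugacy classes. Since the randomizing variables are independent of $g$, we have $\En{W^i(P)(x_1,x_2)} = \En{\,\mathbb{E}_\theta\!\left[W^i(P)(x_1,x_2)(g)\right]}$, where the inner expectation is over the iid shifts. So the first step is to compute that inner expectation as a genuine class function of $g$, i.e. a function of the cycle-type $\la$ only, and then feed it into \eqref{E_f_class_n} and Lemma~\ref{lem_gen_poly}.

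For $W^1$: by definition \eqref{eq_def_W1_poly_in_two}, $W^1(P)(x_1,x_2)(g) = \prod_{m=1}^{l(\la)} P(\theta_m x_1^{\la_m}, \vth_m x_2^{\la_m})$, a product over cycles of \emph{independent} factors, each of the same form. Expanding $P$ and using independence,
\[
\mathbb{E}_\theta\!\left[P(\theta_m x_1^{\la_m}, \vth_m x_2^{\la_m})\right]
= \sum_{k_1,k_2} b_{k_1,k_2}\, \E{\theta^{k_1}\vth^{k_2}}\, x_1^{k_1\la_m} x_2^{k_2\la_m}
= \widetilde P(x_1^{\la_m}, x_2^{\la_m}),
\]
where $\widetilde P(x_1,x_2) := \sum_{k_1,k_2} b_{k_1,k_2}\alpha_{k_1,k_2} x_1^{k_1} x_2^{k_2}$ is again a polynomial with coefficients $b_{k_1,k_2}\alpha_{k_1,k_2}$. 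By independence across cycles, $\mathbb{E}_\theta[W^1(P)(x_1,x_2)(g)] = \prod_m \widetilde P(x_1^{\la_m},x_2^{\la_m}) = \widetilde P_\la(x_1,x_2)$, i.e. exactly the (non-randomized) class function associated to $\widetilde P$. Applying Lemma~\ref{lem_gen_poly} to $\widetilde P$ and reading off the $n$-th coefficient gives \eqref{eq_thm_gen_W1_poly}. (Here $|\alpha_{k_1,k_2}|\le 1$ since $\theta,\vth$ take values on $S^1$, so nothing worsens the convergence; holomorphy for $|t|<1$, $\max\{|x_1|,|x_2|\}\le 1$ follows exactly as in Lemma~\ref{lem_gen_poly}, the exponent sum $\sum b_{k_1,k_2}\alpha_{k_1,k_2}$ being finite.)

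For $W^2$ the structure is slightly different because, from \eqref{eq_def_W2_poly_in_two}, the $m$-th cycle contributes $P\!\left(x_1^{\la_m}\prod_{i=1}^{\la_m}\theta_i^{(m)},\, x_2^{\la_m}\prod_{i=1}^{\la_m}\vth_i^{(m)}\right)$, and the key observation is that for the monomial $x_1^{k_1\la_m}x_2^{k_2\la_m}$ the accompanying product of shifts is $\prod_{i=1}^{\la_m}(\theta_i^{(m)})^{k_1}(\vth_i^{(m)})^{k_2}$, whose expectation factorizes over $i$ into $\E{\theta^{k_1}\vth^{k_2}}^{\la_m} = \alpha_{k_1,k_2}^{\la_m}$ by iid-ness. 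Hence
\[
\mathbb{E}_\theta\!\left[P\!\left(x_1^{\la_m}\!\textstyle\prod_i\theta_i^{(m)}, x_2^{\la_m}\!\textstyle\prod_i\vth_i^{(m)}\right)\right]
= \sum_{k_1,k_2} b_{k_1,k_2}\,\alpha_{k_1,k_2}^{\la_m}\, x_1^{k_1\la_m} x_2^{k_2\la_m}
= Q(x_1^{\la_m},x_2^{\la_m}),
\]
where now $Q(x_1,x_2) := \sum_{k_1,k_2} b_{k_1,k_2}(\alpha_{k_1,k_2}x_1^{k_1}x_2^{k_2})$ — i.e. the substitution is $x_i^{k_i}\mapsto \alpha_{k_1,k_2}x_i^{k_i}$ rather than a rescaling of coefficients. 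Independence across cycles again gives $\mathbb{E}_\theta[W^2(P)(x_1,x_2)(g)] = Q_\la(x_1,x_2)$, and Lemma~\ref{lem_gen_poly} applied to $Q$ yields \eqref{eq_thm_gen_W2_poly}; equivalently one runs Lemma~\ref{lem_symm_fn} directly with $a_m = \sum_{k_1,k_2} b_{k_1,k_2}\alpha_{k_1,k_2}^m (x_1^{k_1}x_2^{k_2})^m$, so that $\sum_m \frac1m a_m t^m = \sum_{k_1,k_2} b_{k_1,k_2}(-\log(1-\alpha_{k_1,k_2}x_1^{k_1}x_2^{k_2}t))$, which exponentiates to the stated product.

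The main thing to be careful about — rather than a deep obstacle — is the bookkeeping that distinguishes the two cases: in $W^1$ one iid pair is shared by a whole cycle, so its $\la_m$-th power never appears and $\alpha_{k_1,k_2}$ enters linearly (rescaling $b_{k_1,k_2}$), whereas in $W^2$ each of the $\la_m$ points of the cycle carries its own iid pair, producing $\alpha_{k_1,k_2}^{\la_m}$ and hence the substitution inside the $(1-\cdots t)$ factor. Once the correct auxiliary polynomial ($\widetilde P$ vs. $Q$) is identified, everything reduces verbatim to Lemma~\ref{lem_gen_poly}; the exchange of finite sums is legitimate because $P$ has only finitely many nonzero $b_{k_1,k_2}$, and the holomorphy and convergence claims transfer unchanged since $|\alpha_{k_1,k_2}|\le 1$. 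I would also remark that taking $P(x_1,x_2)=P_1(x_1)P_2(x_2)$ recovers the generating functions for $W^1(P_1,P_2)$ and $W^2(P_1,P_2)$ as an immediate special case.
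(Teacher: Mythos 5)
Your proposal is correct and follows essentially the same route as the paper: average over the iid shifts cycle by cycle so that $\En{W^1(P)}$ becomes the ordinary class function associated to $\widetilde P(x_1,x_2)=\sum_{k_1,k_2} b_{k_1,k_2}\alpha_{k_1,k_2}x_1^{k_1}x_2^{k_2}$, then apply Lemma~\ref{lem_gen_poly}. For $W^2$, be aware that your $Q$ as literally defined coincides with $\widetilde P$, and its associated class function would produce $\alpha_{k_1,k_2}$ rather than $\alpha_{k_1,k_2}^{\la_m}$; it is your ``equivalently'' step --- running Lemma~\ref{lem_symm_fn} directly with $a_m=\sum_{k_1,k_2} b_{k_1,k_2}\alpha_{k_1,k_2}^m\bigl(x_1^{k_1}x_2^{k_2}\bigr)^m$ --- that makes this case rigorous, and it is exactly what the paper packages as the several-variables case of Lemma~\ref{lem_gen_poly} with the $\alpha_{k_1,k_2}$ treated as extra variables.
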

\textbf{Remark: }Thanks to this theorem, we can also compute the generating functions of expressions of the type $\En{\frac{\textup{d}}{\textup{d}x_1}W^1(P)(x_1,x_2)},\En{\left(\frac{\textup{d}}{\textup{d}x_1}\right)^2 W^1(P)(x_1,x_2)},\cdots$. One simply has to apply the differential operator to the products in \eqref{eq_thm_gen_W1_poly} and \eqref{eq_thm_gen_W2_poly}, after proving appropriate convergence results.
\begin{proof}[Proof of theorem~\ref{thm_conv_x<1_poly}]
The main ingredients of this proof are equation~\eqref{E_f_class_n} and lemma~\ref{lem_gen_poly}.
\begin{description}
\item[Proof of \eqref{eq_thm_gen_W1_poly}.]

We first give an expression for $\En{W^1(P)(x_1,x_2)}$ with~\eqref{E_f_class_n}:
\begin{align}
\En{W^1(P)(x_1,x_2)}\label{eq_W1_explicit_simple}
&=
\slan \E{\prod_{m=1}^{l(\la)}P\left(\theta_m x_1^{\la_m}, \vth_m x_2^{\la_m}\right)}
&=
\slan \prod_{m=1}^{l(\la)} \E{P\left(\theta_m x_1^{\la_m},\vth_m x_2^{\la_m}\right)}.
\end{align}

We therefore have to calculate $\E{P\left(\theta_m x_1^{\la_m},\vth_m x_2^{\la_m}\right)}$:
\begin{align}\label{eq_E_P(x_1,x_2)}
\E{P\left(\theta_m x_1^{\la_m}, \vth_m x_2^{\la_m}\right)}
&=
\sum_{k_1,k_2=0}^\infty b_{k_1,k_2} (x_1^{\la_m})^{k_1} (x_2^{\la_m})^{k_2}\E{\theta_m^{k_1}\vth_m^{k_2}}\nonumber\\
&=
\sum_{k_1,k_2=0}^\infty b_{k_1,k_2} \alpha_{k_1,k_2} (x_1^{\la_m})^{k_1} (x_2^{\la_m})^{k_2}.
\end{align}

We set $f(x_1,x_2):=\sum_{k_1,k_2=0}^\infty b_{k_1,k_2} \alpha_{k_1,k_2} x_1^{k_1} x_2^{k_2}$ and get
\begin{align}\label{eq_En_W^1_explicit}
\En{W^1(P)(x_1,x_2)}
=\slan f_\la(x_1,x_2).
\end{align}
We now can use lemma~\ref{lem_gen_poly} for this $f$ and get
\begin{align}\label{eq_gen_W1_pol}
\sum_{n=0}^\infty \En{W^1(P)(x_1,x_2)}t^n
&=
\sum_{n=0}^\infty \left(\slan f_\la(x_1,x_2)\right) t^n 
=
\prod_{k_1,k_2=0}^\infty (1-x_1^{k_1} x_2^{k_2}t)^{-b_{k_1,k_2} \alpha_{k_1,k_2}}.
\end{align}
We have therefore found a generating function for $W^1(P)(x_1,x_2)$.
\item[Proof of \eqref{eq_thm_gen_W2_poly}.]
The calculations are very similar. The only difference is that:
\begin{multline*}
\E{P\left(x_1^{\la_m}\prod_{i=1}^{\la_m}\theta_i^{(m)}, x_2^{\la_m}\prod_{i=1}^{\la_m}\vth_i^{(m)}\right)}
\\=
\sum_{k_1,k_2=0}^\infty b_{k_1,k_2}(x_1^{\la_m})^{k_1} (x_2^{\la_m})^{k_2}\prod_{i=1}^{\la_m}\E{(\theta_i^{(m)})^{k_1}(\vth_i^{(m)})^{k_2}}\\
\\=
\sum_{k_1,k_2=0}^\infty b_{k_1,k_2}(x_1^{\la_m})^{k_1} (x_2^{\la_m})^{k_2}\alpha_{k_1,k_2}^{\la_m},
\end{multline*}
with $\alpha_{k_1,k_2}$ as above. Since the exponent of $\alpha_{k_1,k_2}^{\la_m}$ is dependent on $\la_m$, we have to use the several (i.e.~more-than-2) variables case of lemma~\ref{lem_gen_poly}.
Explicitly, we use
$$f(x_1,x_2,\alpha_{1,1},\cdots,\alpha_{d_1,d_2})
= \sum_{k_1,k_2=0}^\infty b_{k_1,k_2} x_1^{k_1} x_2^{k_2}\alpha_{k_1,k_2},$$
where $d_1,d_2$ is the degree of $P$ in $x_1,x_2$.
The only monomials in $f$ with non-zero coefficients have the form $x_1^{k_1} x_2^{k_2}\alpha_{k_1,k_2}$.
Therefore,
\begin{align}\label{eq_gen_W2_pol}
\sum_{n=0}^\infty \En{W^2(P)(x_1,x_2)}t^n
=
\prod_{k_1,k_2=0}^\infty (1-x_1^{k_1} x_2^{k_2}\alpha_{k_1,k_2}t)^{-b_{k_1,k_2} }.
\end{align}
We have thus found a generating function for $W^2(P)(x_1,x_2)$.
\end{description}
\end{proof}
\subsection{Examples}
\label{sec_examples_poly}
We now give some examples of generating functions that can be obtained through these results.
\subsubsection{The characteristic polynomial and $\theta\equiv\vth \equiv1$}
We now write down a generating function for $\En{Z_n^{s_1}(x_1)Z_n^{s_2}(x_2)}$.
We set $P_1(x_1)=(1-x)^{s_1}$ and $P_2(x_2)=(1-x)^{s_2}$.
Clearly $\alpha_{k_1,k_2}=1$, so $W^1=W^2$ (this of course needs not be true in general). We get
\begin{align}
\sum_{n=0}^\infty \En{Z_n^{s_1}(x_1)Z_n^{s_2}(x_2)} t^n
&=
\sum_{n=0}^\infty \En{W^1(P_1,P_2)(x_1,x_2)} t^n
=
\sum_{n=0}^\infty \En{W^2(P_1,P_2)(x_1,x_2)} t^n\nonumber\\
&=
\prod_{k_1=0}^{s_1}\prod_{k_2=0}^{s_2}\left(1-x_1^{k_1}x_2^{k_2}t\right)^{\binom{s_1}{k_1}\binom{s_2}{k_2}(-1)^{k_1+k_2+1}}
\label{eq_gen_Zn_theta=1}
\end{align}
\begin{corollary}
[already proved in \cite{DZ}]
We have for $n \geq 1$ that
\begin{align}\label{E_Z_n}
\E{Z_n(x)}=1-x.
\end{align}
\end{corollary}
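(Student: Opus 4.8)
The plan is to derive $\E{Z_n(x)}=1-x$ directly from the generating function identity~\eqref{eq_gen_Zn_theta=1} by specializing to a single variable. First I would set $s_1=1$ and $x_1=x$, and drop the second variable entirely (equivalently take $s_2=0$, so the $k_2$-product is trivial). With $P_1(x)=1-x$ we are exactly in the situation of the characteristic polynomial, and since $\theta\equiv\vth\equiv 1$ gives $\alpha_{k_1,k_2}=1$, Theorem~\ref{thm_gen_fn_poly} (or its special case already recorded in~\eqref{eq_gen_Zn_theta=1}) yields
\begin{align*}
\sum_{n=0}^\infty \E{Z_n(x)}\, t^n = \prod_{k=0}^{1}\bigl(1-x^k t\bigr)^{\binom{1}{k}(-1)^{k+1}} = (1-t)^{-1}(1-xt).
\end{align*}

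Next I would simply expand the right-hand side as a power series in $t$: since $(1-t)^{-1}=\sum_{n\geq 0} t^n$, we get $(1-xt)\sum_{n\geq 0}t^n = \sum_{n\geq 0}t^n - x\sum_{n\geq 0}t^{n+1} = 1 + \sum_{n\geq 1}(1-x)t^n$. Reading off the coefficient of $t^n$ via $\nth{\cdot}$ gives $\E{Z_n(x)}=1-x$ for every $n\geq 1$ (and $\E{Z_0(x)}=1$, consistent with the empty product), which is the claim.

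There is essentially no obstacle here: the content is entirely in Theorem~\ref{thm_gen_fn_poly}, and the corollary is just the extraction of a coefficient from a rational generating function that happens to telescope. The only point requiring a word of care is the justification that the single-variable specialization of the two-variable theorem is legitimate — but this is immediate, since setting $x_2=0$ and $s_2=0$ collapses the second factor to $1$ and the $k_2$-index to $k_2=0$, leaving precisely the one-variable generating function for $\En{Z_n(x)}$. One could alternatively invoke Lemma~\ref{lem_gen_poly} directly with $f(x)=1-x$, i.e. $b_0=1$, $b_1=-1$, to get $\sum_n \En{Z_n(x)}t^n = (1-t)^{-1}(1-xt)$ without passing through the randomized machinery at all; I would mention this as the cleaner route. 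Either way, comparing coefficients finishes the proof.
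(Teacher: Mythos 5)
Your derivation is correct and matches the paper's intent: the corollary is placed immediately after the generating-function identity \eqref{eq_gen_Zn_theta=1} precisely so that one specializes to $s_1=1$, $s_2=0$, obtains $(1-xt)/(1-t)$, and reads off the coefficient $1-x$ of $t^n$ for $n\geq 1$. Your alternative remark that Lemma~\ref{lem_gen_poly} with $b_0=1$, $b_1=-1$ gives the same series directly is also sound and is essentially the same computation.
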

%
%
%
%
%
%
%
%
%
%
\subsubsection{The case $\theta=\overline{\vth}$ uniform on $S^1$}
We have $\alpha_{k_1,k_2}=\left\{
                            \begin{array}{ll}
                              1, & k_1=k_2 \\
                              0, & \hbox{otherwise.}
                            \end{array}
                          \right.$, which again implies $W^1 = W^2$.
We get for
$$P_1(x_1)=\sum_{k=0}^{d_1} a_k x^k, \qquad P_2(x_2)=\sum_{k=0}^{d_2} b_k x^k$$ that
\begin{align}
\sum_{n=0}^\infty \En{W^1(P_1,P_2)(x_1,x_2)} t^n
&=\sum_{n=0}^\infty \En{W^2(P_1,P_2)(x_1,x_2)} t^n\nonumber\\
&= \prod_{k=0}^\infty (1-x_1 x_2 t)^{-a_k b_k}
\end{align}
and
\begin{align}
\sum_{n=0}^\infty \En{\bigl(W^1Z_n(x)\bigr)^{s_1} \overline{\bigl(W^1Z_n(x)\bigr)^{s_2}} } t^n
&=\sum_{n=0}^\infty \En{\bigl(W^2Z_n(x)\bigr)^{s_1} \overline{\bigl(W^2Z_n(x)\bigr)^{s_2}} } t^n\nonumber\\
&= \prod_{k=0}^\infty (1-|x|^{2k} t)^{-\binom{s_1}{k} \binom{s_2}{k}}.
\label{eq_example_chara_theta_uniform}
\end{align}
Equation \eqref{eq_example_chara_theta_uniform} is also valid for $|x|=1$ (see theorem~\ref{thm_gen_fn_poly}). We get in this case

\begin{multline}\label{eq_example_chara_theta_uniform_x=1}
\sum_{n=0}^\infty \En{\bigl(W^1Z_n(x)\bigr)^{s_1} \overline{\bigl(W^1Z_n(x)\bigr)^{s_2}} } t^n
\\=
\sum_{n=0}^\infty \En{\bigl(W^2Z_n(x)\bigr)^{s_1} \overline{\bigl(W^2Z_n(x)\bigr)^{s_2}} } t^n
=
(1-t)^{-\binom{s_1+s_2}{s_1}},
\end{multline}
where we have used the Vandermonde identity for binomial coefficients.
%
%
\subsubsection{An example with $\theta=\overline{\vth}$ discrete on $S^1$}
\label{ex_discrete}
We choose $\theta$ with $\Pb{\theta=e^{m\frac{2\pi i}{p}}}=\frac{1}{p}$ for $p\in\Nr$ and $\vth=\overline{\theta}$.
Then $\alpha_{k_1,k_2}=\left\{
                         \begin{array}{ll}
                           1, &p\hbox{ divides }(k_1-k_2)\\
                           0, & \hbox{otherwise}
                         \end{array}
                       \right.$, and still $W^1=W^2$:
\begin{multline}
\sum_{n=0}^\infty \En{W^1(P_1,P_2)(x_1,x_2)} t^n
=
\sum_{n=0}^\infty \En{W^2(P_1,P_2)(x_1,x_2)} t^n
\\=
\prod_{\substack{k_1,k_2=0\\p|(k_1-k_2)}}^\infty (1-x_1 x_2 t)^{-a_{k_1} b_{k_2}}.
\end{multline}
This situation is similar to what is described in \cite{WieandPaper2}.
\subsection{Asymptotics for $|x|<1$}
\label{sec_conv_for_x<1_poly}
We have found in section \ref{sec_gen_fn_poly} generating functions for both types of  class functions. We can now extract the behaviour of $\En{W^j(P)}$ for $n\rw\infty$ and $\max\set{|x_1|,|x_2|}<1$.
\begin{theorem}
\label{thm_conv_x<1_poly}
Let $x_1,x_2$ be complex numbers with $|x_i|<1$ and $P$ be a polynomial with
$$P(x_1,x_2)=\sum_{k_1,k_2=0}^{\infty} b_{k_1,k_2}x_1^{k_1}x_2^{k_2}.$$
If $b_{0,0}\notin\Zr_{\leq 0}$ then
\begin{align}
\label{eq_thm_conv_poly_w1}
\En{W^{1}(P)(x_1,x_2)} \sim
\frac{n^{b_{0,0}-1}}{\Gamma(b_{0,0})} \prod_{\substack{k_1,k_2\in\Nr\\k_1+k_2\neq0}} (1-x_1^{k_1}x_2^{k_2})^{-b_{k_1,k_2}\alpha_{k_1,k_2}}
\end{align}
and
\begin{align}\label{eq_thm_conv_poly_w2}
\En{W^{2}(P)(x_1,x_2)} \sim
\frac{n^{b_{0,0}-1}}{\Gamma(b_{0,0})} \prod_{\substack{k_1,k_2\in\Nr\\k_1+k_2\neq0}}^\infty (1-\alpha_{k_1,k_2}x_1^{k_1}x_2^{k_2})^{-b_{k_1,k_2}}.
\end{align}
%
If $b_{0,0}\in\Zr_{\leq 0}$ then we just have
\begin{align}
\En{W^1(P)(x_1,x_2)} &\rw 0 \label{eq_thm_conv_poly_w1_special},\\
\En{W^2(P)(x_1,x_2)} &\rw 0 \label{eq_thm_conv_poly_w2_special}.
\end{align}
\end{theorem}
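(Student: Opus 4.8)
The plan is to extract the asymptotics directly from the generating function identities of Theorem~\ref{thm_gen_fn_poly} by means of a singularity analysis à la Flajolet--Odlyzko. Consider first the $W^1$ case. By \eqref{eq_thm_gen_W1_poly}, the quantity $\En{W^1(P)(x_1,x_2)}$ is the $n$-th coefficient of
$$
F(t) := \prod_{k_1,k_2=0}^{\infty}(1-x_1^{k_1}x_2^{k_2}t)^{-b_{k_1,k_2}\alpha_{k_1,k_2}}.
$$
Since $P$ is a polynomial, only finitely many $b_{k_1,k_2}$ are non-zero, so this is a finite product. Split off the factor coming from $(k_1,k_2)=(0,0)$, which is $(1-t)^{-b_{0,0}}$ (here $\alpha_{0,0}=\E{\theta^0\vth^0}=1$), and write
$$
F(t) = (1-t)^{-b_{0,0}}\, G(t), \qquad
G(t) := \prod_{\substack{k_1,k_2\geq 0\\ k_1+k_2\neq 0}}(1-x_1^{k_1}x_2^{k_2}t)^{-b_{k_1,k_2}\alpha_{k_1,k_2}}.
$$
Because $|x_i|<1$ and $|\alpha_{k_1,k_2}|\le 1$, every factor of $G$ has its singularity at $t = (x_1^{k_1}x_2^{k_2})^{-1}$, which has modulus $>1$; hence $G$ is holomorphic on a disc of radius $\rho>1$ strictly larger than $1$, and in particular $G$ is analytic at $t=1$ with $G(1)=\prod_{k_1+k_2\neq 0}(1-x_1^{k_1}x_2^{k_2})^{-b_{k_1,k_2}\alpha_{k_1,k_2}}$.

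When $b_{0,0}\notin\Zr_{\le 0}$, the function $(1-t)^{-b_{0,0}}$ genuinely has an algebraic singularity at $t=1$, and $F$ is analytic in a domain of the form $\{|t|<\rho\}\setminus[1,\rho)$ (a "Delta-domain"). The standard transfer theorem for singularity analysis (see Flajolet--Odlyzko / Flajolet--Sedgewick) gives
$$
\nth{(1-t)^{-b_{0,0}}} = \frac{n^{b_{0,0}-1}}{\Gamma(b_{0,0})}\bigl(1+O(1/n)\bigr),
$$
and multiplication by the function $G$, analytic at $t=1$, is handled by the transfer of products: $\nth{(1-t)^{-b_{0,0}}G(t)} \sim G(1)\,\dfrac{n^{b_{0,0}-1}}{\Gamma(b_{0,0})}$. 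This is precisely \eqref{eq_thm_conv_poly_w1}. The $W^2$ case is identical: by \eqref{eq_thm_gen_W2_poly} one factors out $(1-\alpha_{0,0}t)^{-b_{0,0}}=(1-t)^{-b_{0,0}}$ and the remaining product is again analytic past $t=1$ since $|\alpha_{k_1,k_2}x_1^{k_1}x_2^{k_2}|<1$ for $k_1+k_2\neq 0$; evaluating the regular part at $t=1$ yields \eqref{eq_thm_conv_poly_w2}.

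When $b_{0,0}\in\Zr_{\le 0}$, the factor $(1-t)^{-b_{0,0}}$ is a polynomial in $t$, so $F=(1-t)^{-b_{0,0}}G(t)$ is in fact holomorphic on the whole disc $\{|t|<\rho\}$ with $\rho>1$. Hence its Taylor coefficients decay geometrically, $\nth{F(t)} = O(r^{-n})$ for any $1<r<\rho$, which gives \eqref{eq_thm_conv_poly_w1_special}; the same argument with $\eqref{eq_thm_gen_W2_poly}$ gives \eqref{eq_thm_conv_poly_w2_special}.

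The main obstacle is a bookkeeping one rather than a conceptual one: one must make sure the "Delta-domain" hypothesis of the transfer theorem is genuinely met, i.e. that after removing the single factor at $t=1$ the rest of $F$ extends analytically slightly beyond the unit circle with no other singularity on $|t|=1$. This is exactly where $|x_i|<1$ is used — it forces $|x_1^{k_1}x_2^{k_2}|<1$ for all $(k_1,k_2)\neq(0,0)$ and hence pushes all other singularities strictly outside the closed unit disc; combined with $|\alpha_{k_1,k_2}|\le 1$ the finite product $G$ is unproblematic. A minor point to check is that the principal-branch interpretation of $(1-t)^{-b_{0,0}}$ used in Theorem~\ref{thm_gen_fn_poly} is consistent with the branch for which the $\Gamma$-asymptotic above holds; this is the standard normalization and causes no trouble.
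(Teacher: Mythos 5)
Your proposal is correct and follows essentially the same route as the paper: the paper's proof simply refers to the singularity-analysis transfer theorems VI.1 and VI.3 of Flajolet--Sedgewick, which is exactly the argument you carry out (factor out $(1-t)^{-b_{0,0}}$, note the remaining finite product is analytic past $|t|=1$ because $|x_i|<1$ and $|\alpha_{k_1,k_2}|\le 1$, and transfer).
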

\begin{proof}
One can prove this theorem by induction over the number of factors, but this is rather technical. A more sophisticated way is to use Cauchy's integral formula. The details of this proof can be found in \cite{FlSe09}, theorem VI.1 and VI.3.
\end{proof}

\section{Randomized Class Functions associated to Holomorphic Functions}
\label{sec_extension_to_holo}
Our goal is now to extend what we did in section~\ref{sec_wreath_prod_poly} for polynomials onto holomorphic functions. The proofs of this section thus apply to section~\ref{sec_wreath_prod_poly} as well, but they are more challenging technically: the products that were finite now become infinite, which require us to leap beyond formal generating series in $t$ and actually consider convergence issues in $t$.

The main issue arises with the extension of theorem~\ref{thm_conv_x<1_poly}: the theorem is also true for holomorphic functions, but we cannot argue anymore by induction over the number of factors, since there are now infinitely many. We could still devise a proof based on complex analysis, as in \cite{FlSe09}. We will give a different proof with probability theory. Since this needs a lot of work, we defer the extension of theorem~\ref{thm_conv_x<1_poly}  to section~\ref{sec_asym_for_holo}.

Let $x_0\in\C$ and $r\in\Rr_+$, and set $B_r(x_0):=\set{ x\in \C ; |x-x_0|<r}.$ We now extend lemma~\ref{lem_gen_poly} (again stated for the case of $p=2$ variables only):
\begin{lemma}
\label{lem_gen_allg}
Let $f(x_1,x_2)$ be a holomorphic function in $B_{r_1}(0)\times B_{r_2}(0)$  with
$$f(x_1,x_2)=\sum_{k_1,k_2=0}^\infty b_{k_1, k_2} x_1^{k_1} x_2^{k_2}$$
and $f_\lambda$ its associated class function. Set
$$\Omega':=\set{(x_1,x_2)\in \C^2; |x_i| \leq 1 \text{ if } r_i>1 \text{ and } |x_i| <   r_i \text{ if }r_i \leq1}.$$
We then have on $\Omega'\times B_1(0)$
\begin{align}\label{eq_gen_allg}
\sla f_\la(x_1,x_2) t^n
=\prod_{k_1=0}^\infty \prod_{k_2=0}^\infty(1-x_1^{k_1}x_2^{k_2}t)^{-b_{k_1,k_2}}.
\end{align}

The product is holomorphic in $(x_1,x_2,t)$ in the interior of $\Omega'\times B_1(0)$.
The product is holomorphic in $t$ in $B_1(0)$ for all $(x_1,x_2)\in\Omega'$.
Note that we use the principal branch of logarithm to define $z^s$ for $z\in \C\setminus{\Rr_-}$.
\end{lemma}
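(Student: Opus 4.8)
The plan is to reduce Lemma~\ref{lem_gen_allg} to Lemma~\ref{lem_symm_fn} (and morally to Lemma~\ref{lem_gen_poly}), with all the genuinely new work being the bookkeeping of convergence. First I would fix $(x_1,x_2)\in\Omega'$ and $t\in B_1(0)$ and apply Lemma~\ref{lem_symm_fn} with $a_m:=f(x_1^m,x_2^m)$. For this to be legitimate I must check that $a_m$ is well-defined, i.e.\ that $(x_1^m,x_2^m)$ lies in the polydisc of holomorphy $B_{r_1}(0)\times B_{r_2}(0)$ for every $m\ge1$: if $r_i>1$ then $|x_i|\le1$ forces $|x_i^m|\le1<r_i$ (and $|x_i^m|<1$ in the interior, which handles the branch-cut worry since then $1-x_1^{k_1}x_2^{k_2}t$ never hits $\Rr_-$), while if $r_i\le1$ then $|x_i|<r_i\le1$ gives $|x_i^m|=|x_i|^m\le|x_i|<r_i$. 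Then Lemma~\ref{lem_symm_fn} yields $\sum_\la \frac{1}{z_\la} f_\la(x_1,x_2)t^{|\la|}=\exp\left(\sum_{m\ge1}\frac1m f(x_1^m,x_2^m)t^m\right)$, provided one side converges absolutely, which is the point to establish next.

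The second step is the absolute-convergence check on the right-hand exponent. Since $f$ is holomorphic on $B_{r_1}(0)\times B_{r_2}(0)$, on any slightly smaller polydisc its Taylor coefficients satisfy a bound $|b_{k_1,k_2}|\le C\rho_1^{-k_1}\rho_2^{-k_2}$ for suitable $\rho_i$ with $|x_i|<\rho_i\le r_i$ (and $\rho_i$ can be taken $>1$ when $r_i>1$, using $|x_i|\le1$). Plugging into $\sum_{m\ge1}\frac1m\sum_{k_1,k_2}|b_{k_1,k_2}|\,|x_1|^{mk_1}|x_2|^{mk_2}|t|^m$ and summing the geometric series in $m$ first, one gets a bound by $C\sum_{k_1,k_2}\frac{|x_1/\rho_1|^{k_1}|x_2/\rho_2|^{k_2}\,|t|}{1-|x_1|^{k_1}|x_2|^{k_2}|t|}$, and because $|t|<1$ the denominators are bounded below away from $0$, so this is dominated by $\frac{C|t|}{1-|t|}\sum_{k_1,k_2}|x_1/\rho_1|^{k_1}|x_2/\rho_2|^{k_2}<\infty$. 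Hence the exponent converges absolutely and locally uniformly, so by Lemma~\ref{lem_symm_fn} the left side converges absolutely as well; rearranging the double sum (justified by absolute convergence) and using $\sum_{m\ge1}\frac{z^m}{m}=-\log(1-z)$ for $|z|<1$ with the principal branch turns the exponent into $-\sum_{k_1,k_2}b_{k_1,k_2}\log(1-x_1^{k_1}x_2^{k_2}t)$, i.e.\ the infinite product $\prod_{k_1,k_2}(1-x_1^{k_1}x_2^{k_2}t)^{-b_{k_1,k_2}}$, which therefore converges. This establishes~\eqref{eq_gen_allg} pointwise on $\Omega'\times B_1(0)$.

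The third step is holomorphy. For holomorphy in $t$ on $B_1(0)$ with $(x_1,x_2)\in\Omega'$ fixed: the partial products are holomorphic in $t$, and the estimate above shows $\sum_{k_1,k_2}|b_{k_1,k_2}\log(1-x_1^{k_1}x_2^{k_2}t)|$ converges uniformly on compact subsets of $B_1(0)$, so the product converges locally uniformly and the limit is holomorphic (no factor vanishes identically, and one can note each factor is nonzero for $|t|<1$ when $|x_1^{k_1}x_2^{k_2}|\le1$, except possibly when $x_1^{k_1}x_2^{k_2}t=1$, which cannot happen for $|t|<1$). For joint holomorphy in $(x_1,x_2,t)$ on the interior of $\Omega'\times B_1(0)$: there $|x_i|<\rho_i$ with the coefficient bound still in force and all three variables ranging over an open polydisc, so the same Weierstrass-type argument (locally uniform absolute convergence of $\sum b_{k_1,k_2}\log(1-x_1^{k_1}x_2^{k_2}t)$, each summand being holomorphic in $(x_1,x_2,t)$ where its argument avoids $\Rr_{\le0}$) gives joint holomorphy by Hartogs / the several-variable Weierstrass convergence theorem.

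The main obstacle I anticipate is not any single deep idea but getting the convergence regions exactly right — in particular the asymmetric definition of $\Omega'$ (closed disc when $r_i>1$, open disc when $r_i\le1$) is precisely what makes $(x_1^m,x_2^m)$ stay inside the domain of $f$ for all $m$ while also keeping the product's factors off the branch cut, and one has to be careful that the ``closed'' case only gives a product that is holomorphic in the \emph{interior} (hence the two separate holomorphy statements in the lemma). A secondary technical nuisance is justifying the interchange of the $\sum_m$ and $\sum_{k_1,k_2}$ summations and the passage from the exponential of a convergent series to a convergent infinite product, but both are routine once the single absolute bound $\frac{C|t|}{1-|t|}\sum_{k_1,k_2}|x_1/\rho_1|^{k_1}|x_2/\rho_2|^{k_2}$ is in hand.
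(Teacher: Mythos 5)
Your proposal is correct and follows essentially the same route as the paper: apply Lemma~\ref{lem_symm_fn} with $a_m=f(x_1^m,x_2^m)$ exactly as in Lemma~\ref{lem_gen_poly}, and justify the interchange of $\sum_m$ and $\sum_{k_1,k_2}$ by an absolute bound using $|t|<1$ and $|x_1^{k_1}x_2^{k_2}|\leq 1$. The only cosmetic difference is that the paper gets the needed bound directly from the absolute convergence of $\sum_{k_1,k_2}|b_{k_1,k_2}x_1^{k_1}x_2^{k_2}|$ at a point of the polydisc of holomorphy, whereas you route through Cauchy coefficient estimates; you also spell out the holomorphy statements, which the paper leaves implicit in the locally uniform bound.
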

%
\begin{proof}
The proof works as the proof of lemma~\ref{lem_gen_poly}, except that the justification for the exchange of sums that occurs in the second equality is barely more tricky:
\begin{multline*}
\left|\sum_{k_1,k_2=0}^\infty \sum_{m=1}^\infty\frac{t^m}{m} b_{k_1,k_2} (x_1^{k_1}x_2^{k_2})^m\right|
\leq \sum_{k_1,k_2=0}^\infty \sum_{m=1}^\infty \left|\frac{t^m}{m}\right||b_{k_1,k_2} x_1^{k_1}x_2^{k_2}|
\\=\log(1-|t|)\sum_{k_1,k_2=0}^\infty |b_{k_1,k_2} x_1^{k_1}x_2^{k_2}|<\infty,
\end{multline*}
where the last inequality is true since $f$ is holomorphic in $B_{r_1}(0)\times B_{r_2}(0)$.
\end{proof}
\textbf{Remark}: The conditions on $x_1,x_2$ and $t$ ensures that the Taylor-expansion of $\log(1-x_1^{k_1}x_2^{k_2}t)$ is absolutely convergent.
If $f$ is a (Laurent-) polynomial, one can replace this condition by any other that ensures $\sup\set{|x_1^{k_1}x_2^{k_2}t|, b_{k_1,k_2}\neq 0}<1$.

Until now, we have defined randomized class functions of two types associated to polynomials.
We can use formula \eqref{eq_def_W1_poly_in_two} and \eqref{eq_def_W2_poly_in_two} to define $W^{j,n}(f)(x_1,x_2)$ for a holomorphic function $f$.
We will always assume in what follows that $f$ is holomorphic in $B_{r_1}(0)\times B_{r_2}(0)$.
The function $W^{j,n}(f)(x_1,x_2)$ is also holomorphic in $B_{r_1}(0)\times B_{r_2}(0)$ since $\theta,\vth\in S^1$.

We now get a complete analog of the result in section \ref{sec_gen_fn_poly}.
\begin{theorem}
\label{thm_gen_holo}
Let $f$ be a holomorphic function in $B_{r_1}(0)\times B_{r_2}(0)$ with
$$f(x_1,x_2)=\sum_{k_1,k_2=0}^\infty b_{k_1,k_2}x_1^{k_1}x_2^{k_2}$$
We define as in \eqref{eq_def_alpha} $\alpha_{k_1,k_2}:=\E{\theta^{k_1}\vth^{k_2}}$ and
$$\Omega':=\set{(x_1,x_2)\in \C^2; |x_i| \leq 1 \text{ if } r_i>1 \text{ and }\\
                                |x_i| <   r_i \text{ if }r_i \leq1}.
$$
We get
\begin{align}
\label{eq_lem_gen_holo_1}
\En{W^1(f)(x_1,x_2)}
&=
\nth{\prod_{k_1,k_2=0}^\infty (1-x_1^{k_1}x_2^{k_2}t)^{-\alpha_{k_1,k_2}b_{k_1,k_2}}}\\
\label{eq_lem_gen_holo_2}
\En{W^2(f)(x_1,x_2)}
&=
\nth{\prod_{k_1,k_2=0}^\infty (1-\alpha_{k_1,k_2}x_1^{k_1}x_2^{k_2}t)^{-b_{k_1,k_2}}}.
\end{align}
The products are holomorphic in $(x_1,x_2,t)$ in the interior of $\Omega'\times B_1(0)$.
The products are holomorphic in $t$ in $B_1(0)$ for all $(x_1,x_2)\in\Omega'$.
\end{theorem}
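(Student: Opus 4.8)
The plan is to mirror the structure of the proof of Theorem~\ref{thm_gen_fn_poly}, replacing the finite-product argument with the convergence machinery of Lemma~\ref{lem_gen_allg}. First I would compute, exactly as in the polynomial case, the conditional expectation
\begin{align*}
\E{P\left(\theta_m x_1^{\la_m},\vth_m x_2^{\la_m}\right)}
=\sum_{k_1,k_2=0}^\infty b_{k_1,k_2}\alpha_{k_1,k_2}(x_1^{\la_m})^{k_1}(x_2^{\la_m})^{k_2},
\end{align*}
where now one must check that the interchange of $\E{\cdot}$ with the infinite sum is legitimate: since $\theta,\vth\in S^1$ we have $|\theta_m^{k_1}\vth_m^{k_2}|=1$, so dominated convergence applies as soon as $(x_1^{\la_m},x_2^{\la_m})$ lies in the domain of absolute convergence of $f$, which is guaranteed for $(x_1,x_2)\in\Omega'$ because $\la_m\geq 1$ shrinks the arguments. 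Using independence of the $(\theta_m,\vth_m)$ this gives
\begin{align*}
\En{W^1(f)(x_1,x_2)}=\slan g_\la(x_1,x_2),\qquad
g(x_1,x_2):=\sum_{k_1,k_2}b_{k_1,k_2}\alpha_{k_1,k_2}x_1^{k_1}x_2^{k_2}.
\end{align*}

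Next I would verify that $g$ is holomorphic on the same polydisc $B_{r_1}(0)\times B_{r_2}(0)$ as $f$: because $|\alpha_{k_1,k_2}|\leq 1$, the coefficient bound $\sum|b_{k_1,k_2}\alpha_{k_1,k_2}x_1^{k_1}x_2^{k_2}|\leq\sum|b_{k_1,k_2}x_1^{k_1}x_2^{k_2}|<\infty$ shows $g$ converges absolutely wherever $f$ does. Hence Lemma~\ref{lem_gen_allg} applies verbatim to $g$ (the associated $\Omega'$ is the same), yielding
\begin{align*}
\sum_{n=0}^\infty\En{W^1(f)(x_1,x_2)}t^n
=\sum_\la\frac{1}{z_\la}g_\la(x_1,x_2)t^{|\la|}
=\prod_{k_1,k_2=0}^\infty(1-x_1^{k_1}x_2^{k_2}t)^{-\alpha_{k_1,k_2}b_{k_1,k_2}}
\end{align*}
on $\Omega'\times B_1(0)$, together with the stated holomorphy in $(x_1,x_2,t)$ on the interior and in $t$ on $B_1(0)$. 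Extracting the $t^n$ coefficient gives \eqref{eq_lem_gen_holo_1}.

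For \eqref{eq_lem_gen_holo_2} the computation of the conditional expectation instead produces the factor $\alpha_{k_1,k_2}^{\la_m}$ (one $\alpha$ per point in the $m$-th cycle), exactly as in the $W^2$ part of the proof of Theorem~\ref{thm_gen_fn_poly}; the $\la_m$-dependent exponent forces the use of the several-variables version of Lemma~\ref{lem_gen_allg}, treating the $\alpha_{k_1,k_2}$ as extra variables so that the relevant one-variable exponential sum in Lemma~\ref{lem_symm_fn} is applied to $a_m=f(x_1^m,\dots)$ evaluated with $\alpha^m$ in place of $\alpha$, and only monomials of the shape $x_1^{k_1}x_2^{k_2}\alpha_{k_1,k_2}$ occur. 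Here the convergence check is slightly more delicate because one is expanding $\log(1-\alpha_{k_1,k_2}x_1^{k_1}x_2^{k_2}t)$, but again $|\alpha_{k_1,k_2}|\leq 1$ keeps everything inside the region where the estimate in the proof of Lemma~\ref{lem_gen_allg} holds, so the same bound $\log(1-|t|)\sum|b_{k_1,k_2}x_1^{k_1}x_2^{k_2}|<\infty$ controls the double sum.

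The main obstacle is purely one of bookkeeping for convergence: one must be careful that every interchange of $\E{\cdot}$, $\sum_{k_1,k_2}$, $\sum_m$ and $\sum_n$ is justified on the precise region $\Omega'\times B_1(0)$, and that the resulting infinite products genuinely converge to holomorphic functions there rather than merely as formal power series. All of this reduces to the two facts that $|\theta|=|\vth|=1$ (hence $|\alpha_{k_1,k_2}|\le 1$) and that raising $x_i$ to the power $\la_m\ge 1$ cannot leave the polydisc of convergence of $f$; once these are in place, Lemma~\ref{lem_symm_fn} and Lemma~\ref{lem_gen_allg} do the rest, and the holomorphy statements are inherited directly from Lemma~\ref{lem_gen_allg}.
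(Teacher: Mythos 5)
Your proposal is correct and follows essentially the same route as the paper: for \eqref{eq_lem_gen_holo_1} you absorb the $\alpha_{k_1,k_2}$ into the coefficients, note $|\alpha_{k_1,k_2}|\leq 1$ keeps the new series holomorphic on the same polydisc, and invoke lemma~\ref{lem_gen_allg}. The one caveat concerns \eqref{eq_lem_gen_holo_2}: you cannot literally use a ``several-variables version of lemma~\ref{lem_gen_allg}'' as in the polynomial case, since for holomorphic $f$ there are infinitely many nonzero $b_{k_1,k_2}$ and hence infinitely many extra variables $\alpha_{k_1,k_2}$ --- this is exactly the obstruction the paper points out; however, the rest of your own sentence (apply lemma~\ref{lem_symm_fn} directly to $a_m=\sum_{k_1,k_2}b_{k_1,k_2}\alpha_{k_1,k_2}^{m}x_1^{mk_1}x_2^{mk_2}$ and justify the interchange by bounding the expansion of $\log(1-\alpha_{k_1,k_2}x_1^{k_1}x_2^{k_2}t)$ using $|\alpha_{k_1,k_2}|\leq 1$, with the same estimate as in lemma~\ref{lem_gen_allg}) is precisely the paper's fix, so your argument has no genuine gap once that phrasing is dropped.
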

\begin{proof}
The proof of \eqref{eq_lem_gen_holo_1} is almost similar to the proof of \eqref{eq_thm_gen_W1_poly}.
There is only one important difference. The function $f$ defined in the proof of \eqref{eq_thm_gen_W1_poly} is now a holomorphic function and not a polynomial. We thus have to apply lemma~\ref{lem_gen_allg} instead of lemma~\ref{lem_gen_poly}. The function $f$ is holomorphic in $B_{r_1}(0)\times B_{r_2}(0)$ since $|\alpha_{k_1,k_2}|\leq 1$. This is thus sufficient to  prove \eqref{eq_lem_gen_holo_1}.

The proof of \eqref{eq_lem_gen_holo_2} is little bit more intricate. We cannot keep the analogy with the proof of \eqref{eq_thm_gen_W2_poly} and simply use lemma~\ref{lem_gen_allg}. Indeed, we would now have a holomorphic function in infinitely many variables. Thankfully, we can still use our main lemma, lemma~\ref{lem_symm_fn}. We have found in the proof of lemma~\ref{lem_gen_allg} that
\begin{align*}
\E{P\left(x_1^{\la_m}\prod_{i=1}^{\la_m}\theta_i^{(m)},x_2^{\la_m}\prod_{i=1}^{\la_m}\vth_i^{(m)}\right)}
&=
\sum_{k_1,k_2=0}^\infty b_{k_1,k_2}(x_1^{\la_m})^{k_1} (x_2^{\la_m})^{k_2}\alpha_{k_1,k_2}^{\la_m}.
\end{align*}
We define here $$a_m:=\sum_{k_1,k_2=0}^\infty b_{k_1,k_2}(x_1^{m})^{k_1} (x_2^{m})^{k_2}\alpha_{k_1,k_2}^{m}$$
and get with lemma~\ref{lem_symm_fn}
\begin{align*}
\sum_{n=0}^\infty \left( \En{W^2(f)(x_1,x_2)}\right)t^n
&=
\sum_{n=0}^\infty \left( \slan a_\la\right)t^n
=
\exp \left(\sum_{m=1}^\infty a_m t^m\right).
\end{align*}
The last steps are now completely similar to the proof of lemma~\ref{lem_gen_allg}.
\end{proof}
We now consider the generalization of theorem~\ref{thm_conv_x<1_poly}  to the case of holomorphic functions.
\section{Asymptotics for randomized Class Functions associated to Holomorphic Functions}
\label{sec_asym_for_holo}
We assume as in the last section that $f$ is holomorphic in $B_{r_1}(0)\times B_{r_2}(0)$.
We have calculated in theorem~\ref{thm_conv_x<1_poly} the behaviour of $\En{W^j(P)}$ for $n\rw\infty$. It is natural to ask if this lemma generalizes to class functions associated to holomorphic functions. Explicitly, we prove
\begin{theorem}
\label{thm_conv_x<1_holo} Let $x_1,x_2\in\C$ be given with $|x_i| < \min(r_i,1)$ and $f,\alpha_{k_1,k_2}$ be as in theorem \ref{thm_gen_holo}.

If $b_{0,0}\notin\Zr_{\leq 0}$, then
\begin{align}\label{eq_lem_conv_holo_w1}
\En{W^{1}(f)(x_1,x_2)} \sim
\frac{n^{b_{0,0}-1}}{\Gamma(b_{0,0})} \prod_{\substack{k_1,k_2\in\Nr\\k_1+k_2\neq0}} (1-x_1^{k_1}x_2^{k_2})^{-b_{k_1,k_2}\alpha_{k_1,k_2}}
\end{align}
and
\begin{align}\label{eq_lem_conv_holo_w2}
\En{W^{2}(f)(x_1,x_2)} \sim
\frac{n^{b_{0,0}-1}}{\Gamma(b_{0,0})} \prod_{\substack{k_1,k_2\in\Nr\\k_1+k_2\neq0}} (1-\alpha_{k_1,k_2}x_1^{k_1}x_2^{k_2})^{-b_{k_1,k_2}}.
\end{align}
If $b_{0,0}\in\Zr_{\leq 0}$ then there exists a $\delta=\delta(x_1,x_2)>0$ such that
\begin{align}
\En{W^1(f)(x_1,x_2)} & = O\left(\frac{1}{(1+\delta)^n}\right) \qquad (n\rw\infty)\label{eq_lem_conv_holo_w1_special},\\
\En{W^2(f)(x_1,x_2)} & = O\left(\frac{1}{(1+\delta)^n}\right) \qquad (n\rw\infty) \label{eq_lem_conv_holo_w2_special}.
\end{align}
\end{theorem}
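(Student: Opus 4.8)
The plan is to extract the asymptotics directly from the generating function identities in Theorem~\ref{thm_gen_holo} by a singularity analysis in the variable $t$. Write $F_j(t) = \sum_n \En{W^j(f)(x_1,x_2)} t^n$ for the generating function given by the product formulas \eqref{eq_lem_gen_holo_1}, \eqref{eq_lem_gen_holo_2}. Fix $x_1,x_2$ with $|x_i|<\min(r_i,1)$. The first step is to split off the term $k_1=k_2=0$: this contributes the factor $(1-t)^{-b_{0,0}}$ (resp.\ $(1-\alpha_{0,0}t)^{-b_{0,0}} = (1-t)^{-b_{0,0}}$, since $\alpha_{0,0}=\E{1}=1$), and all other factors have the form $(1-x_1^{k_1}x_2^{k_2}t)^{-c_{k_1,k_2}}$ with $|x_1^{k_1}x_2^{k_2}|\le \max(|x_1|,|x_2|) =: \rho < 1$ for $k_1+k_2\ge 1$. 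So one writes $F_j(t) = (1-t)^{-b_{0,0}} G_j(t)$, where $G_j(t)$ is the remaining (infinite) product.

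The key step is to show that $G_j(t)$ extends to a holomorphic function on a disk $B_{1+\delta}(0)$ for some $\delta>0$, with $G_j(1) = \prod_{k_1+k_2\ne 0}(1-x_1^{k_1}x_2^{k_2})^{-c_{k_1,k_2}}$ equal to the constant appearing in \eqref{eq_lem_conv_holo_w1}, \eqref{eq_lem_conv_holo_w2}. For this I would first note that each factor $(1-x_1^{k_1}x_2^{k_2}t)^{-c_{k_1,k_2}}$ is holomorphic and zero-free on $|t| < 1/|x_1^{k_1}x_2^{k_2}|$, which contains $B_{1/\rho}(0) \supsetneq B_1(0)$; so one should take $1+\delta < 1/\rho$. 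Then one must check that the infinite product converges locally uniformly on $B_{1+\delta}(0)$: taking logarithms, $\sum_{k_1+k_2\ne 0} c_{k_1,k_2}\log(1-x_1^{k_1}x_2^{k_2}t)$, and bounding $|\log(1-w)| \le C|w|$ for $|w|\le (1+\delta)\rho < 1$, the sum is dominated by $C(1+\delta)\sum_{k_1+k_2\ne 0}|c_{k_1,k_2}|\,\rho^{k_1+k_2 \text{-ish}}$. Here the decisive input is that $f$ is holomorphic on $B_{r_1}(0)\times B_{r_2}(0)$, so $\sum|b_{k_1,k_2}|s_1^{k_1}s_2^{k_2}<\infty$ for $s_i$ slightly larger than $|x_i|$; combined with $|\alpha_{k_1,k_2}|\le 1$ this gives absolute convergence of the relevant series, exactly as in the proof of Lemma~\ref{lem_gen_allg}. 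This is the main obstacle: one must manage the two-index summation carefully and pick $\delta$ (equivalently, the radii $s_i \in (|x_i|,\min(r_i,1))$) so that simultaneously $(1+\delta)\max(|x_1|,|x_2|)<1$ and the tail series converges; a single explicit choice such as $1+\delta = \min(1/\rho, s_1/|x_1|, s_2/|x_2|)$ minus a hair should work.

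Once $F_j(t) = (1-t)^{-b_{0,0}} G_j(t)$ with $G_j$ holomorphic and nonvanishing near $t=1$ and holomorphic on $B_{1+\delta}(0)$, the conclusion follows from standard transfer theorems (e.g.\ \cite{FlSe09}, Theorems~VI.1 and VI.3, the same tools invoked for Theorem~\ref{thm_conv_x<1_poly}). If $b_{0,0}\notin\Zr_{\le 0}$, then $\nth{(1-t)^{-b_{0,0}}} \sim n^{b_{0,0}-1}/\Gamma(b_{0,0})$, and singularity analysis gives $\nth{F_j(t)} \sim G_j(1)\, n^{b_{0,0}-1}/\Gamma(b_{0,0})$, which is precisely \eqref{eq_lem_conv_holo_w1}, \eqref{eq_lem_conv_holo_w2}. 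If $b_{0,0}\in\Zr_{\le 0}$, then $(1-t)^{-b_{0,0}}$ is a polynomial in $t$, so $F_j(t)$ is holomorphic on all of $B_{1+\delta}(0)$; hence its coefficients decay geometrically, $\nth{F_j(t)} = O((1+\delta')^{-n})$ for any $\delta' < \delta$, by Cauchy's estimate on a circle of radius $1+\delta'$. Taking that $\delta'$ as the $\delta$ in the statement yields \eqref{eq_lem_conv_holo_w1_special}, \eqref{eq_lem_conv_holo_w2_special}. (The probabilistic route via the Feller coupling promised in the introduction is an alternative; the analytic argument above is the most direct and reuses the machinery already set up for the polynomial case.)
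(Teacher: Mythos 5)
Your proposal is correct, but it is not the paper's proof: the paper explicitly sets the analytic route aside (``one can still use the proof in \cite{FlSe09}, but we give here a different proof with probability theory'') and instead argues via the Feller coupling. Your argument splits the generating function of theorem~\ref{thm_gen_holo} as $(1-t)^{-b_{0,0}}G_j(t)$ (using $\alpha_{0,0}=1$), shows $G_j$ extends holomorphically and zero-free (it is the exponential of an absolutely convergent log-series, by the same estimate as in lemma~\ref{lem_gen_allg}, since $|x_1^{k_1}x_2^{k_2}t|\le(1+\delta)\max(|x_1|,|x_2|)<1$ for $k_1+k_2\neq 0$ and $\sum|b_{k_1,k_2}||x_1|^{k_1}|x_2|^{k_2}<\infty$) to a disk of radius $1+\delta$, and then transfers via \cite{FlSe09} (whose standard scale does allow complex exponents $b_{0,0}$), with Cauchy estimates giving the geometric decay when $b_{0,0}\in\Zr_{\leq 0}$; this is all sound, treats both variables and both $W^1$, $W^2$ uniformly, and needs no reduction step. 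The paper instead first reduces to $b_{0,0}=1$ (lemma~\ref{lem_reduction_to_1}, itself an analytic coefficient manipulation close in spirit to your transfer step), rewrites $W^{1,n}(f)$ through the cycle counts $C_m^{(n)}$, couples these to independent Poisson variables $Y_m$ via the Feller coupling, proposes the limit object $W^{1,\infty}(f)=\prod_m f(x^m)^{Y_m}$, computes $\E{W^{1,\infty}(f)}=\prod_k(1-x^k)^{-b_k}$, and proves $\En{W^{1,n}(f)}\rw\E{W^{1,\infty}(f)}$ by convergence in distribution plus uniform integrability (or, in its second proof, by a real-interval comparison argument and Montel). What your route buys is brevity and direct reuse of the machinery already invoked for theorem~\ref{thm_conv_x<1_poly}; what the paper's route buys is strictly more probabilistic information, namely an explicit limiting random variable and the distributional convergence $W^{1,n}(f)\xrightarrow{d}W^{1,\infty}(f)$ (lemma~\ref{lem_Zn rw Z_infty}), which is of interest beyond the first-moment asymptotics stated in the theorem.
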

This theorem is a direct generalization of theorem~\ref{thm_conv_x<1_poly}.
We cannot argue anymore by induction over the number of factors, since there are infinitely many factors in the RHS of \eqref{eq_thm_conv_poly_w1} and \eqref{eq_thm_conv_poly_w2}. One can still use the proof in \cite{FlSe09}, but we give here a different proof with probability theory. The main argumentation will be based on the Feller coupling. This proof will occupy us for this whole section.
%
%

The proof of theorem~\ref{thm_conv_x<1_holo} will run as follows.  We prove in section~\ref{sec_reduction_to_1} that the case $b_{0,0}=1$ implies the general case.  We first prove theorem~\ref{thm_conv_x<1_holo} for $W^{1}(f)$ for $b_{0,0}=1$ and give at the end some comments for the proof for $W^2$.  In order to prove this, we give in section \ref{sec_easy_facts} some definitions, conventions and some easy facts. In section~\ref{sec_feller2} we give an alternative expression for $W^1(f)$ using cycles and define the Feller coupling. This allows us to compare $W^{1,n}(f)$ and $W^{1,n+1}(f)$.  After these preparations, we suggest in section~\ref{sec_the_limit} a candidate $W^{1,\infty}(f)$ for the limit in $n$ of $W^{1,n}(f)$ and prove some analytic results on it. Finally, we prove in section~\ref{sec_conv_for_x<1_holo} the convergence $\E{W^{1,n}(f)}\rw \E{W^{1,\infty}(f)} $.
%
%
%
%
\subsection{Reduction to $b_{0,0}=1$}
\label{sec_reduction_to_1}
\begin{lemma}
\label{lem_reduction_to_1}
If theorem~\ref{thm_conv_x<1_holo} is true for $b_{0,0}=1$ then it is true for all $b_{0,0}\in \mathbb{C}$.
\end{lemma}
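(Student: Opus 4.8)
The plan is to reduce the general case $b_{0,0}\in\C$ to the case $b_{0,0}=1$ by multiplying $f$ by a suitable correction function. Concretely, I would observe that the RHS of the asymptotic formulas in theorem~\ref{thm_conv_x<1_holo} depends on $b_{0,0}$ only through the scalar prefactor $n^{b_{0,0}-1}/\Gamma(b_{0,0})$; the infinite product over $(k_1,k_2)\neq(0,0)$ is insensitive to the value of $b_{0,0}$. So the natural strategy is: given $f$ with $f(0,0)=b_{0,0}$, write $f = b_{0,0}\cdot g$ where $g := f/b_{0,0}$ (assuming $b_{0,0}\neq 0$; the case $b_{0,0}=0$ is separate, see below). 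Then $g$ is holomorphic in the same polydisc $B_{r_1}(0)\times B_{r_2}(0)$ with $g(0,0)=1$, so the theorem applies to $g$. The multiplicativity of the construction $f\mapsto f_\la$ over products — namely $f_\la = (b_{0,0})^{l(\la)} g_\la$, and likewise at the level of $W^{1}$ and $W^{2}$, since $W^j(f)(x_1,x_2)(\sigma)$ is a product over the $l(\la)$ cycles of $\sigma$ — then lets me relate $\En{W^j(f)}$ to a weighted expectation involving $W^j(g)$.

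The subtlety is that multiplying by a constant per cycle is not the same as multiplying by a global constant: $W^j(f) = b_{0,0}^{\,l(\sigma)}\,W^j(g)$, where $l(\sigma)$ is the number of cycles of $\sigma$. So $\En{W^j(f)(x_1,x_2)} = \En{b_{0,0}^{\,l(\sigma)}\,W^j(g)(x_1,x_2)}$, which is a change of measure: weighting $\mathcal{S}_n$ by $b_{0,0}^{\,l(\sigma)}$ (up to normalization) is exactly the Ewens measure with parameter $b_{0,0}$, mentioned in the introduction. The cleanest way to exploit this at the level of generating functions is to go through the generating function identity of theorem~\ref{thm_gen_holo}: replacing $f$ by $b_{0,0} g$ in \eqref{eq_lem_gen_holo_1} multiplies every exponent $\alpha_{k_1,k_2}b_{k_1,k_2}$ by $b_{0,0}$ only in bookkeeping, but more usefully, one checks directly that if $h_g(t)$ denotes the generating function $\sum_n \En{W^1(g)}t^n = \prod(1-x_1^{k_1}x_2^{k_2}t)^{-\alpha_{k_1,k_2}c_{k_1,k_2}}$ with $c_{0,0}=1$, then the generating function for $f=b_{0,0}g$ is obtained by raising the $(0,0)$-factor $(1-t)^{-\alpha_{0,0}}=(1-t)^{-1}$ to the power $b_{0,0}$ and leaving all other factors unchanged (here $\alpha_{0,0}=\E{\theta^0\vth^0}=1$). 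Thus the generating function for $f$ is $(1-t)^{-(b_{0,0}-1)}$ times the generating function for $g$.

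From here the reduction is a standard singularity-analysis / Hadamard-product-type argument: if $\En{W^1(g)(x_1,x_2)} \sim C/\Gamma(1) \cdot n^{0} = C$ (the $b_{0,0}=1$ asymptotic, with $C = \prod_{k_1+k_2\neq 0}(1-x_1^{k_1}x_2^{k_2})^{-\alpha_{k_1,k_2}b_{k_1,k_2}}$), then multiplying the generating function by $(1-t)^{-(b_{0,0}-1)}$, whose coefficients are $\binom{n+b_{0,0}-2}{n}\sim n^{b_{0,0}-2}/\Gamma(b_{0,0}-1)$, and taking the Cauchy product yields $\En{W^1(f)(x_1,x_2)}\sim C\cdot n^{b_{0,0}-1}/\Gamma(b_{0,0})$, which is exactly \eqref{eq_lem_conv_holo_w1}; one invokes a transfer theorem (e.g.\ the convolution estimate in \cite{FlSe09}) to justify that the error term from $g$ does not spoil the leading order. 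The case $b_{0,0}\in\Zr_{\leq 0}$ requires separate handling: there $b_{0,0}=0$ means $f = b_{0,0}g$ degenerates, so instead I would factor out $(1-t)$ to the appropriate positive integer power $1-b_{0,0}$, making the relevant singularity milder than $t=1$, and conclude the geometric decay $O((1+\delta)^{-n})$ from the fact that the remaining generating function (for $g$ with $g(0,0)=1$, multiplied by the polynomial $(1-t)^{1-b_{0,0}}$) is then holomorphic in a disc strictly larger than $B_1(0)$ in $t$ — this uses that the only singularity on $|t|=1$ of the $g$-generating function is at $t=1$, which itself needs the hypothesis $|x_i|<\min(r_i,1)$. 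The main obstacle is being careful about this last point: controlling the location and nature of the singularities of the infinite product in $t$, and ensuring the transfer theorem applies uniformly; once the generating-function factorization $h_f(t) = (1-t)^{-(b_{0,0}-1)}h_g(t)$ is in hand, the rest is routine analytic combinatorics.
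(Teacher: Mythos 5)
There is a genuine flaw at the heart of your reduction: you normalize multiplicatively, $g:=f/b_{0,0}$, but the generating-function factorization you then assert is false for that $g$. If $f=b_{0,0}\,g$, then \emph{every} Taylor coefficient of $f$ is $b_{0,0}$ times the corresponding coefficient $c_{k_1,k_2}$ of $g$, so in \eqref{eq_lem_gen_holo_1} every exponent is multiplied by $b_{0,0}$, not only the $(0,0)$ one: $\sum_n\En{W^1(f)}t^n=\prod_{k_1,k_2}(1-x_1^{k_1}x_2^{k_2}t)^{-b_{0,0}c_{k_1,k_2}\alpha_{k_1,k_2}}$, which is not $(1-t)^{-(b_{0,0}-1)}h_g(t)$. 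Your limiting constant $C=\prod_{k_1+k_2\neq0}(1-x_1^{k_1}x_2^{k_2})^{-b_{k_1,k_2}\alpha_{k_1,k_2}}$ is also not the one attached to your $g$ (that one would carry exponents $b_{k_1,k_2}/b_{0,0}$), so the argument is internally inconsistent, and it degenerates at $b_{0,0}=0$. The Ewens-measure remark $W^j(f)=b_{0,0}^{\,l(\sigma)}W^j(g)$ does not repair this: the hypothesis you are allowed to use is the asymptotics of the Haar expectation $\En{W^j(\cdot)}$ for functions with constant term $1$, not of a $b_{0,0}^{\,l(\sigma)}$-weighted expectation, so the change of measure cannot be combined with the induction hypothesis.

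The correct normalization, and the one the paper uses, is \emph{additive}: set $\widetilde f:=f-(b_{0,0}-1)$, which has constant term $1$ and the same coefficients $b_{k_1,k_2}$ for $k_1+k_2\neq0$ (recall $\alpha_{0,0}=1$). Then \eqref{eq_lem_gen_holo_1} genuinely factors as $\sum_n\En{W^1(f)}t^n=(1-t)^{-(b_{0,0}-1)}h(x_1,x_2,t)$ with $h$ the generating function of $W^1(\widetilde f)$, and your $C$ is exactly the limit $h_\infty$ of its coefficients supplied by the case $b_{0,0}=1$. With that fix your outline coincides with the paper's proof, but note that the step you call routine is where most of the work lies: mere convergence $h_n\rw h_\infty$ is not enough when $\Re(b_{0,0}-1)\leq0$, so the paper first upgrades it to the rate $|h_n-h_\infty|=O\bigl((1+\delta)^{-n}\bigr)$ (holomorphy of $(1-t)h$ in $B_{1+\delta}(0)$, which is where $|x_i|<\min(r_i,1)$ enters), then estimates the convolution $\sum_k h_{n-k}\binom{c-1+k}{k}$ via Euler--MacLaurin, shows the constant term vanishes when $\Re(b_{0,0})<1$ (dominated convergence plus $\sum_k\binom{c-1+k}{k}=0$ for $\Re(c)<0$), and treats $\Re(b_{0,0})=1$, $b_{0,0}\neq1$ by continuity in $c$. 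Your handling of $b_{0,0}\in\Zr_{\leq0}$ (write the generating function as the polynomial $(1-t)^{-b_{0,0}}$ times $(1-t)h(t)$, holomorphic on $B_{1+\delta}(0)$, hence geometric decay of coefficients) is sound and is essentially the paper's Case 1 in streamlined form.
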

Before we prove this lemma, we do some (small) preparations
\begin{definition}
\label{def_binom}
We set for $s\in\C, k\in \Nr$.
\begin{align}\label{eq_def_binom}
\binom{s}{k}:=\prod_{m=1}^k \frac{s-m+1}{m}=\frac{\Gamma(s+1)}{\Gamma(k+1)\Gamma(s-k+1)}\text{  and  }\binom{s}{0}:=1.
\end{align}
\end{definition}
The proof of the last equality can be found in \cite{busam}.  We then have
\begin{lemma}
\label{lem_newton_series}
We have for each $s,z\in\C$ with $|z|<1$ or $\Re(s)<0,|z|=1$ that
\begin{align}\label{eq_newton_series }
\frac{1}{(1+z)^{s}}=\sum_{k=0}^\infty \binom{s-1+k}{k} z^k,
\end{align}
and the sum is absolutely convergent in both cases. Also, we have for $s\notin \set{-1,-2,-3,\cdots}$
\begin{align}\label{eq_growth_binom}
\binom{n+s}{n} = \frac{n^{s}}{\Gamma(s+1)}\big(1+O(n^{-1})\bigr)     \qquad (n\rw\infty).
\end{align}
\end{lemma}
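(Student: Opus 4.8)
The plan is to prove the two assertions separately — they are classical and essentially independent, and the only substantive analytic input will be Stirling's formula with an explicit error term. \emph{For the series identity}, I would first record, straight from Definition~\ref{def_binom}, the elementary algebraic fact
\[
\binom{s-1+k}{k}=\frac{s(s+1)\cdots(s+k-1)}{k!}=(-1)^{k}\binom{-s}{k}.
\]
In view of this, the claimed expansion is just Newton's generalized binomial theorem $(1+w)^{\alpha}=\sum_{k\ge0}\binom{\alpha}{k}w^{k}$ with $\alpha=-s$ (and $w$ equal to $z$ up to sign). For $|w|<1$ this is standard: the series on the right has radius of convergence at least $1$ (it terminates when $-s\in\Nr$), and termwise differentiation shows it solves $(1+w)g'(w)=\alpha g(w)$, $g(0)=1$, whose unique holomorphic solution on $B_1(0)$ is the principal branch of $(1+w)^{\alpha}$ (legitimate there since $1+w$ avoids $\Rr_-$). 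This settles the case $|z|<1$ for every $s\in\C$, and the boundary case whenever $s\in\Zr_{\leq 0}$, where the sum is finite.

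\emph{For the growth estimate}, I would use the last equality of \eqref{eq_def_binom} to write
\[
\binom{n+s}{n}=\frac{\Gamma(n+s+1)}{\Gamma(n+1)\,\Gamma(s+1)},
\]
noting that $\Gamma(s+1)$ is finite and nonzero precisely because $s\notin\{-1,-2,-3,\dots\}$. It then suffices to invoke the ratio asymptotics $\Gamma(n+a)/\Gamma(n+b)=n^{a-b}\bigl(1+O(n^{-1})\bigr)$ with $a=s+1$, $b=1$, which follows from Stirling's expansion $\log\Gamma(x+\alpha)=(x+\alpha-\tfrac12)\log x-x+\tfrac12\log(2\pi)+O(x^{-1})$ through $\log\Gamma(n+s+1)-\log\Gamma(n+1)=s\log n+O(n^{-1})$; this is exactly \eqref{eq_growth_binom}.

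\emph{For the remaining boundary case} ($|z|=1$, $\Re(s)<0$, $s\notin\Zr$, so in particular $s\notin\{0,-1,-2,\dots\}$), I would feed the growth estimate back in, with parameter $s-1$ and running index $k$, obtaining
\[
\left|\binom{s-1+k}{k}\right|=\frac{k^{\Re(s)-1}}{|\Gamma(s)|}\bigl(1+O(k^{-1})\bigr).
\]
Since $\Re(s)-1<-1$, the series $\sum_k\binom{s-1+k}{k}z^k$ converges absolutely and uniformly on $\overline{B_1(0)}$, hence defines a continuous function there. The other side of the identity is holomorphic on $\overline{B_1(0)}$ except at the single boundary point where its base vanishes, and there its modulus equals $|1+w|^{-\Re(s)}e^{\Im(s)\arg(1+w)}\to0$ as $1+w\to0$ because $\Re(s)<0$, so it too extends continuously to $\overline{B_1(0)}$. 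Two continuous functions on $\overline{B_1(0)}$ that agree on the open disc agree on its closure, which delivers the identity — and the asserted absolute convergence — on $|z|=1$.

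I do not expect a genuine obstacle here: the lemma assembles classical facts and could simply be cited. The step requiring the most care is the boundary argument — checking that the branch cut of the base meets $\overline{B_1(0)}$ only at the singular point and that the function extends continuously by $0$ there exactly when $\Re(s)<0$ — together with keeping the error in the Gamma-ratio asymptotics genuinely $O(n^{-1})$ rather than merely $o(1)$.
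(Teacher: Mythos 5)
Your proof is correct, and there is in fact nothing in the paper to compare it against: the lemma is stated as a classical fact and never proved there (only the Gamma-function identity in Definition~\ref{def_binom} is referred to \cite{busam}), so your self-contained argument supplies what the paper leaves to the literature. The route you take is the standard one and all steps check out: the ODE characterization $(1+w)g'(w)=\alpha g(w)$, $g(0)=1$ on $B_1(0)$ for Newton's series; the ratio asymptotics $\Gamma(n+a)/\Gamma(n+b)=n^{a-b}\bigl(1+O(n^{-1})\bigr)$ from Stirling with explicit error, which indeed needs only $s\notin\{-1,-2,\dots\}$; and feeding that estimate back in with parameter $s-1$ to get term bounds $O\bigl(k^{\Re(s)-1}\bigr)$, hence absolute and uniform convergence on the closed disc, followed by the continuity argument at the single bad boundary point. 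Your observation that $1+w$ lies in the closed right half-plane for $|w|\le 1$, so that $\arg(1+w)$ stays bounded and $|1+w|^{-\Re(s)}e^{\Im(s)\arg(1+w)}\to 0$ exactly when $\Re(s)<0$, is the crucial point, and your separate treatment of terminating $s\in\Zr_{\le 0}$ covers the values excluded from the growth estimate with parameter $s-1$, so the case split is complete and there is no circularity (the growth estimate is proved independently of the series identity). One remark worth recording: as printed, the displayed identity has a sign slip --- the coefficients $\binom{s-1+k}{k}$ against $z^k$ expand $(1-z)^{-s}$, equivalently $(1+z)^{-s}=\sum_{k\ge 0}\binom{s-1+k}{k}(-z)^k$ --- and the corrected form is the one the paper actually uses later when expanding $(1-t)^{-c}$ in the proof of Lemma~\ref{lem_reduction_to_1}; you silently work with this correct version, which is the right call. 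A pleasant by-product of your boundary argument is the evaluation $\sum_{k\ge 0}\binom{s-1+k}{k}=0$ for $\Re(s)<0$, which is precisely the fact invoked at the end of Case~2 of that proof.
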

We also need
\begin{lemma}[Euler-MacLaurin Formula, see \cite{Apostol:99}]\label{euler_mac}
Let $a:[0,\infty]\rw\C$ be a smooth function. We then have for all $n\geq 2$
\begin{equation}\label{eq_euler_mac}
\sum_{k=2}^n a(k)
=
\int_1^n a(s) \ \text{d}s + \int_1^n (s-\lfloor s\rfloor) a^\prime(s) \ \text{d}s.
\end{equation}
 \end{lemma}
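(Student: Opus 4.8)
The plan is to prove \eqref{eq_euler_mac} by splitting the integration range $[1,n]$ into the unit intervals $[k,k+1]$ for $k=1,\dots,n-1$, on each of which the floor function is constant, and then performing a single integration by parts on each piece.

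First I would write
\[
\int_1^n (s-\lfloor s\rfloor)\, a'(s)\, ds = \sum_{k=1}^{n-1} \int_k^{k+1} (s-k)\, a'(s)\, ds,
\]
which is legitimate because $\lfloor s\rfloor = k$ for $s\in[k,k+1)$ and the single points $s=k+1$ do not affect the integrals. Since $a$ is smooth (indeed, continuous differentiability already suffices), integration by parts on $[k,k+1]$ with $u=s-k$ and $dv=a'(s)\,ds$ gives
\[
\int_k^{k+1}(s-k)\, a'(s)\, ds = \bigl[(s-k)a(s)\bigr]_k^{k+1} - \int_k^{k+1} a(s)\, ds = a(k+1) - \int_k^{k+1} a(s)\, ds.
\]

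Next I would sum this identity over $k=1,\dots,n-1$. The boundary terms telescope into $\sum_{k=1}^{n-1} a(k+1) = \sum_{k=2}^n a(k)$, while the integrals reassemble into $\int_1^n a(s)\,ds$, so that
\[
\int_1^n (s-\lfloor s\rfloor)\, a'(s)\, ds = \sum_{k=2}^n a(k) - \int_1^n a(s)\, ds.
\]
Rearranging this yields \eqref{eq_euler_mac} exactly.

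There is essentially no serious obstacle here: the only point deserving a word is the discontinuity of $s\mapsto\lfloor s\rfloor$ at the integers, which is harmless since it changes the integrand only on a set of measure zero, and the smoothness hypothesis on $a$ is far more than the argument requires. If needed, the same computation with $[1,n]$ replaced by $[m,n]$ gives the corresponding formula with an arbitrary integer lower limit $m$.
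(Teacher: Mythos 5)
Your proof is correct and complete: splitting $[1,n]$ at the integers, integrating by parts on each unit interval, and telescoping the boundary terms is exactly the standard elementary derivation of this first-order Euler--MacLaurin identity, and your remark that the jump of $\lfloor s\rfloor$ at integers only changes the integrand on a null set disposes of the only delicate point. The paper itself offers no proof, simply citing Apostol, so your argument supplies a self-contained verification consistent with how the lemma is used later (there only $C^1$ regularity of $a$ is ever needed, as you observe).
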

\begin{proof}[Proof of lemma~\ref{lem_reduction_to_1}]
We prove this lemma only for $W^1$, since the proof for $W^2$ is the same.  We put $c=b_{0,0}-1$ and rewrite the generating function in \eqref{eq_lem_gen_holo_1} as follows:
\begin{align}\label{eq_gen_reformulated}
\left(\frac{1}{(1-t)^{c}}\right)
\underbrace{\left(\frac{1}{1-t}\prod_{\substack{k_1,k_2=0\\ k_1+k_2 \neq 0}}^\infty(1-x_1^{k_1} x_2^{k_2} t)^{-b_{k_1,k_2} \alpha_{k_1,k_2}}\right)}_{=:h(x_1,x_2,t)}.
\end{align}
We define $\widetilde{f}(x_1,x_2):=f(x_1,x_2)-c$. Then $\widetilde{f}(0,0)=1$ and $h(x_1,x_2,t)$ is the generating function for $W^1(\widetilde{f})(x_1,x_2)$ (compare with \eqref{eq_lem_gen_holo_1}). Since we assume that theorem~\ref{thm_conv_x<1_holo} is true for $b_{0,0}=1$ and in that case the RHS of \eqref{eq_lem_conv_holo_w1} does not depend on $n$, we can write $h(x_1,x_2,t)=\sum_{n=0}^\infty h_n t^n$ with $h_n\rw h_\infty \in \mathbb{C}$.
We first look at the convergence rate of the sequence $(h_k)_{k\in\Nr}$.
Let $x_1,x_2$ be fixed. The function $(1-t)h(x_1,x_2,t)$ is holomorphic in $B_{1+\delta}(0)$ for some $\delta>0$ small enough. This can be seen by inspecting the proof of lemma~\ref{lem_gen_allg}.
The convergence radius of the expansion of $(1-t)h(x_1,x_2,t)$ around $0$ is therefore at least $1+\delta$
and so $|\nth{(1-t)h(x_1,x_2,t)}|= O\bigl( (1+\delta)^{-n}\bigr)$. But $\nth{(1-t)h(x_1,x_2,t)}= h_n-h_{n-1}$. Therefore
$
|h_n-h_{n-1}|
=
O\bigl( (1+\delta)^{-n}\bigr) 
$. We get
\begin{align}
|h_{\infty}-h_n|
&=
\lim_{m\rw\infty} |h_m-h_{n}|
\leq
\lim_{m\rw\infty}\sum_{k=n+1}^m |h_k-h_{k-1}|
=
O\left(
\sum_{k=n+1}^m (1+\delta)^{-k}
\right)\nonumber\\
&\leq 
O\left(
\sum_{k=n+1}^\infty (1+\delta)^{-k}
\right)
=
O\bigl( (1+\delta)^{-n}\bigr) 
\end{align}
We are ready to prove theorem~\ref{lem_reduction_to_1}.
\begin{description}
\item[Case 0, $c=0$ ]
This case is trivial, since $c=0$ implies $b_{0,0}=1$.
\item[Case 1, $c\in\set{-1,-2,-3,\cdots}$ ]
In this case $-c\in\Nr$ and $\binom{-c}{k} = 0$ for $k\geq -c$. We get for $n\geq -c$
\begin{align*}
\nth{\frac{1}{(1-t)^{c}}h(x_1,x_2,t)}
&=
\nth{(1-t)^{-c}h(x_1,x_2,t)}
=
\sum_{k=0}^{-c} h_{n-k} \binom{-c}{k}(-1)^k\\
&=
\sum_{k=0}^{-c}\Bigl(h_\infty + (1+\delta)^{-(n-k)} \Bigr)\binom{-c}{k}(-1)^k\\
&=
h_\infty \left(\sum_{k=0}^{-c}\binom{-c}{k}(-1)^k\right) + O\bigr((1+\delta)^{-n}\bigl)
= 
O\bigr((1+\delta)^{-n}\bigl)
\end{align*}
%
%
%
\item[Case 2, $c\notin\Zr_{\leq 0}$ ]
This case is a little bit more difficult. We have
\begin{align*}
\nth{\frac{1}{(1-t)^{c}}h(x_1,x_2,t)}
&=
\nth{(1-t)^{-c}h(x_1,x_2,t)}
=
\sum_{k=0}^{n} h_{n-k} \binom{c-1+k}{k} \\
&=
h_n + ch_{n-1} +
\sum_{k=2}^{n}\Bigl( h_{\infty}+ O\bigl( (1+\delta)^{-(n-k)}\bigr)\Bigr)\Bigl(\frac{k^{c-1}}{\Gamma(c)}+O\bigl( k^{c-2}\bigr)\Bigr)
\end{align*}
Obviously we have $h_n + ch_{n-1}= (1+c) h_\infty + O\bigl( n^{c-2}\bigr)$. \\
It follows immediately form lemma~\ref{euler_mac} (with a small calculation) that
\begin{align}
\sum_{k=2}^n \frac{k^{c-1}}{\Gamma(c)}
=
const. + \frac{1}{\Gamma(c+1)}n^c+ O\bigl( n^{c-2}\bigr)
\end{align}
Therefore the leading term gives precisely what we want. There remains to show that the other terms behaves well.
We get again with lemma~\ref{euler_mac} and $d:=\Re(c)$
\begin{align*}
&\sum_{k=2}^n \left|(1+\delta)^{-(n-k)}k^{c-1}\right|
=
\frac{1}{(1+\delta)^n} \sum_{k=2}^n k^{d-1}(1+\delta)^k\\
&=
\frac{1}{(1+\delta)^n} \left(
\int_1^n s^{d-1}(1+\delta)^s \ \text{d}s 
+ \int_1^n (s-\lfloor s\rfloor) \left(s^{d-1}(1+\delta)^s\right)^\prime \ \text{d}s
 \right).
\end{align*}
But
\begin{align*}
\left|\int_1^n s^{d-1}(1+\delta)^s \ \text{d}s\right|
&=\left|\left.\Bigl(s^{d-1}\frac{(1+\delta)^s}{\log(1+\delta)}
\Bigr)\right|_{s=1}^n
-
\int_1^n s^{d-2}\frac{(1+\delta)^s}{\log(1+\delta)} \ \text{d}s\right| \\
&\leq
\left.\Bigl(s^{d-1}\frac{(1+\delta)^s}{\log(1+\delta)}
\Bigr)\right|_{s=1}^n
+
\int_1^n s^{d-2}\frac{(1+\delta)^s}{\log(1+\delta)} \ \text{d}s \\
&\leq
\left(n^{d-1}\frac{(1+\delta)^n}{\log(1+\delta)} - \frac{(1+\delta)}{\log(1+\delta)} \right)
+
\int_1^n s^{d-2}\frac{(1+\delta)^n}{\log(1+\delta)} \ \text{d}s \\
& = 
const. + O(n^{d-1}(1+\delta)^n) 
=
 const. + O(n^{c-1} (1+\delta)^n)
\end{align*}
Therefore
$$
\sum_{k=2}^n \left|(1+\delta)^{-(n-k)}k^{c-1}\right|
=
const. +O(n^{c-1})
$$
We put everything together and get
\begin{equation}
\nth{\frac{1}{(1-t)^{c}}h(x_1,x_2,t)}
=
const. + \frac{h_\infty}{\Gamma(c+1)}n^c  +O(n^{c-1})\label{eq_asymptotic_nth_holo}
\end{equation}
If $\Re(c)> 0$, then \eqref{eq_asymptotic_nth_holo} is enough to prove lemma~\ref{lem_reduction_to_1}.
If $\Re(c)< 0$ then we have to prove that the constant is $0$. We know that the sequence $(h_k)_{k\in\Nr}$ is bounded by a constant $C$. Therefore
\begin{multline}
\left|\nth{\frac{1}{(1-t)^{c}}h(x_1,x_2,t)}\right|
=
\left|\sum_{k=0}^{n} h_{n-k} \binom{c-1+k}{k} \right|
\\\leq
C \sum_{k=0}^{n} \left| \binom{c-1+k}{k}   \right|
\leq
C \sum_{k=0}^{\infty} \left| \binom{c-1+k}{k}   \right|
< \infty
\end{multline}
by lemma~\ref{lem_newton_series}. We therefore can apply dominated convergence (with $h_k=0$ for $k<0$) and get
\begin{multline}
\lim_{n\rw\infty}\sum_{k=0}^{\infty} h_{n-k} \binom{c-1+k}{k}
=
\sum_{k=0}^{\infty} \lim_{n\rw\infty} h_{n-k} \binom{c-1+k}{k}
\\=
\sum_{k=0}^{\infty} h_\infty \binom{c-1+k}{k}
=
0
\end{multline}
The case $\Re(c)=0,c\neq 0$ is the only one left. It is easy to see that the constant in \eqref{eq_asymptotic_nth_holo} is continuous in $c$. This completes the proof.

\end{description}
\end{proof}
\subsection{Definitions, conventions, simplifications and some facts}
\label{sec_easy_facts}
We calculate as in \eqref{eq_E_P(x_1,x_2)}:
\begin{align*}
\E{f\left(\theta_m x_1, \vth_m x_2\right)}
&=
\sum_{k_1,k_2=0}b_{k_1,k_2} \alpha_{k_1,k_2} x_1^{k_1} x_2^{k_2}
=:
\widetilde{f}(x_1,x_2)
\end{align*}
 and therefore
\begin{align*}
\En{W_{\theta,\vth}^{1,n}(f)(x_1,x_2)}
=
\En{W_{1,1}^{1,n}(\widetilde{f})(x_1,x_2)}.
\end{align*}
Since we are only interested in expectations, we can assume $\theta \equiv \vth \equiv 1$ and consider $\widetilde{f}$ instead of $f$.

%
We will also assume that $f$ only depends on one variable. The arguments in the proof are the same, but the expressions are much more simple.

We now set a $0<r<\min\set{1,r_1}$ fixed and prove theorem~\ref{thm_conv_x<1_holo} for $|x|<r$.
We shrink $x$ because we sometimes need $\sup |f(x)|$ to be finite.

\subsection{Cycle notation and the Feller coupling}
\label{sec_feller2}
In one variable, for $f$ holomorphic, one of the definitions given above simplifies to
\begin{align}\label{eq_W1_theta=vth=1}
\Bigl(W^1(f)(x)\Bigr)(\sigma)
&=\prod_{m=1}^{l(\la)} f\left(x^{\la_m}\right) \text{ for }\sigma \in \mathcal{C}_\la.
\end{align}
%

%
%
%
%

Some of the factors in \eqref{eq_W1_theta=vth=1} are equal and we therefore can collect them. We do this as follows
\begin{definition}
\label{def_cm_cycle}
Let $\la=(\la_1,\cdots,\la_l)$ be a partition of $n$. We define
\begin{align}
C_m:=C_m^{(n)}:=C_m^{(n)}(\la):=\# \set{i | 1\leq i\leq l  \text{ and } \la_i = m}.
\end{align}
\end{definition}
We get with definition~\ref{def_cm_cycle} and \eqref{eq_W1_theta=vth=1}
\begin{align}\label{eq_def W1_holo_with_cycle}
\Bigl(W^1(f)(x)\Bigr)(\sigma)
&=
\prod_{m=1}^{n} \bigl(f(x^m)\bigr)^{C_m^{(n)}(\la)}\text{ for } \sigma\in \mathcal{C}_\la.
\end{align}
We interpret the functions $C_m^{(n)}$ as class functions $C_m^{(n)}: \mathcal{S}_n \rw \Nr$ and therefore as random variables on $\mathcal{S}_n$. One can find two useful lemmas in \cite{barbour}.
%
%
\begin{lemma}
\label{lem_cycle_distribution}
Let $c_1,c_2,\cdots,c_n\in \Nr$ be given with $\sum_{m=1}^n m c_m=n$. Then
\begin{align}\label{eq_distribution_Cm}
\Pb{(C_1=c_1,\cdots,C_n=c_n)}=\prod_{m=1}^n \left(\frac{1}{m}\right)^{c_m} \frac{1}{c_m!}.
\end{align}
\end{lemma}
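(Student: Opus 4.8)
The plan is to recognize that the event $\{C_1 = c_1, \dots, C_n = c_n\}$ is exactly a conjugacy class of $\mathcal{S}_n$, so that the statement is an immediate reformulation of the cardinality formula for $\mathcal{C}_\la$ recalled in Section~\ref{sec_definitions}.

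First I would fix $c_1,\dots,c_n\in\Nr$ with $\sum_{m=1}^n m c_m = n$ and let $\la$ be the unique partition of $n$ having exactly $c_m$ parts equal to $m$ for each $m$; this is well defined precisely because $|\la|=\sum_m m c_m = n$. By Definition~\ref{def_cm_cycle}, a permutation $\sigma\in\mathcal{S}_n$ satisfies $C_m^{(n)}(\sigma)=c_m$ for all $m$ if and only if its cycle-type is $\la$, i.e.\ if and only if $\sigma\in\mathcal{C}_\la$. Hence
\[
\Pb{(C_1=c_1,\dots,C_n=c_n)} = \mu_{\mathcal{S}_n}(\mathcal{C}_\la) = \frac{|\mathcal{C}_\la|}{|\mathcal{S}_n|}.
\]

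Then I would substitute $|\mathcal{C}_\la| = |\mathcal{S}_n|/z_\la$ with $z_\la=\prod_{r=1}^n r^{c_r}c_r!$ (recalled in Section~\ref{sec_definitions}; it also follows from Lemma~\ref{lem_symm_fn} by extracting the coefficient of $t^n$ with $a_m\equiv1$, or from the fact that the centralizer of an element of cycle-type $\la$ has order $z_\la$). Since the exponents $c_r$ in $z_\la$ are precisely the multiplicities $C_r(\la)=c_r$, this gives
\[
\Pb{(C_1=c_1,\dots,C_n=c_n)} = \frac{1}{z_\la} = \prod_{m=1}^n \left(\frac{1}{m}\right)^{c_m}\frac{1}{c_m!},
\]
which is the claim.

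I do not expect any real obstacle here; the only point requiring a little care is the bookkeeping that matches the multiplicities $c_m$ of the statement with the exponents appearing in $z_\la$, together with the observation that the hypothesis $\sum_m m c_m = n$ is exactly what guarantees $\la\vdash n$. If a self-contained argument is preferred over quoting the cardinality of $\mathcal{C}_\la$, one can instead count directly: there are $n!$ ways to arrange $\{1,\dots,n\}$ into the prescribed list of cycle slots, and this overcounts each permutation by a factor $\prod_m m^{c_m}$ coming from cyclic rotations inside each cycle and a factor $\prod_m c_m!$ coming from reordering the cycles of equal length, so $|\mathcal{C}_\la| = n!/\prod_m(m^{c_m}c_m!)$, and dividing by $n!$ gives the same conclusion.
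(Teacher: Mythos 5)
Your proof is correct: the event $\{C_1=c_1,\dots,C_n=c_n\}$ is exactly the conjugacy class $\mathcal{C}_\la$ for the partition $\la$ with $c_m$ parts equal to $m$, and dividing the cardinality formula $|\mathcal{C}_\la|=|\mathcal{S}_n|/z_\la$ (with $z_\la=\prod_r r^{c_r}c_r!$, recalled in Section~\ref{sec_definitions}) by $n!$ gives \eqref{eq_distribution_Cm} at once. The paper itself does not prove this lemma but only cites \cite{barbour}, so your argument — the standard counting one, including the self-contained count $|\mathcal{C}_\la|=n!/\prod_m\left(m^{c_m}c_m!\right)$ — supplies exactly the routine verification the authors omit.
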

%
%
%
%
%
%
\begin{lemma}
\label{lem_cm_rw_ym}
The random variables $C^{(n)}_m$ converge for each $m\in\Nr$ in distribution to a Poisson-distributed random variable $Y_m$ with $\E{Y_m}=\frac{1}{m}$.
In fact, we have for all $b\in\Nr$
$$(C_1^{(n)},C_1^{(n)},\cdots,C^{(n)}_b) \xrightarrow{d} (Y_1,Y_2,\cdots,Y_b) \qquad (n\rw\infty), $$
with all $Y_m$ independent.
\end{lemma}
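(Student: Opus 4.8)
The statement to prove is Lemma~\ref{lem_cm_rw_ym}: the cycle-counts $C_m^{(n)}$ converge jointly in distribution to independent Poissons $Y_m$ with $\E{Y_m}=1/m$. I will give a proof by the method of moments (factorial moments), which dovetails cleanly with the explicit distribution in Lemma~\ref{lem_cycle_distribution}; this is the classical Goncharov / Arratia--Tavar\'e result, so the proof should be short.

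\textbf{Plan.} First I would recall that for a Poisson$(\mu)$ variable $Y$, the factorial moments are $\E{(Y)_r}=\E{Y(Y-1)\cdots(Y-r+1)}=\mu^r$, and that a family of nonnegative integer-valued random variables converges jointly in distribution to independent Poissons $Y_1,\dots,Y_b$ with means $\mu_1,\dots,\mu_b$ as soon as, for every choice of exponents $r_1,\dots,r_b\in\Nr$,
\begin{align*}
\En{\prod_{m=1}^b (C_m^{(n)})_{r_m}} \xrightarrow[n\to\infty]{} \prod_{m=1}^b \mu_m^{r_m},
\end{align*}
with $\mu_m=1/m$; this is a standard fact (the Poisson distribution being determined by its moments, and convergence of factorial moments implying convergence in distribution for uniformly bounded-type arrays — one can cite \cite{barbour}). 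So the whole problem reduces to computing the joint factorial moments of the $C_m^{(n)}$.

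\textbf{Key computational step.} To compute $\En{\prod_{m=1}^b (C_m^{(n)})_{r_m}}$ I would use a direct combinatorial counting argument rather than summing \eqref{eq_distribution_Cm}. The falling factorial $(C_m^{(n)})_{r_m}$ counts the number of ordered $r_m$-tuples of \emph{distinct} $m$-cycles in $\sigma$; hence $\prod_m (C_m^{(n)})_{r_m}$ counts the number of ways to pick, inside $\sigma$, an ordered collection of disjoint cycles, $r_m$ of them of length $m$ for each $m\le b$. Summing over $\sigma\in\mathcal{S}_n$ and dividing by $n!$: the number of ways to choose the underlying disjoint cycles on labelled points is
\begin{align*}
\frac{n!}{(n-N)!}\prod_{m=1}^b \frac{1}{m^{r_m}}, \qquad N:=\sum_{m=1}^b m\,r_m,
\end{align*}
(choose and arrange the $N$ points into the prescribed cycles: $n!/(n-N)!$ ordered injections, then divide by $m$ for each cyclic rotation of an $m$-cycle), and then the remaining $n-N$ points can be permuted arbitrarily, giving a factor $(n-N)!$. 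Thus for $n\ge N$,
\begin{align*}
\En{\prod_{m=1}^b (C_m^{(n)})_{r_m}}
= \frac{1}{n!}\cdot \frac{n!}{(n-N)!}\cdot (n-N)! \cdot \prod_{m=1}^b \frac{1}{m^{r_m}}
= \prod_{m=1}^b \left(\frac1m\right)^{r_m},
\end{align*}
which is \emph{exactly} the limiting value $\prod_m \mu_m^{r_m}$ — indeed it is already attained for all $n\ge N$. Taking $b=1$ and $r_1=1$ gives $\E{C_m^{(n)}}=1/m$ and identifies the mean of $Y_m$; the general $b$ gives the asserted joint convergence, and the product form of the limit gives the independence of the $Y_m$.

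\textbf{Main obstacle.} There is no serious analytic obstacle here; the computation is exact. The only points needing care are (i) justifying rigorously that convergence of \emph{all} joint factorial moments to those of independent Poissons implies joint convergence in distribution — this is where I would lean on the cited reference \cite{barbour} (or prove it via the fact that a random vector with independent Poisson marginals is moment-determinate and factorial moments control the probability generating function on a neighbourhood of the origin); and (ii) the bookkeeping in the counting argument — correctly accounting for the $m^{r_m}$ overcount from cyclic rotations and for the ordered-vs-unordered choice of the $r_m$ cycles of each length, which is precisely why falling factorials (not ordinary powers) appear on the left. Once those are set up, the proof is essentially the one-line identity above.
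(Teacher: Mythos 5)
Your proof is correct, but note that the paper itself does not prove this lemma at all: it simply states it (together with Lemma~\ref{lem_cycle_distribution}) and refers to \cite{barbour} for both. So you are supplying an argument where the authors rely on the literature, and the argument you give is the classical one (Goncharov; Arratia--Tavar\'e): the joint factorial moments $\En{\prod_{m=1}^b (C_m^{(n)})_{r_m}}$ are computed by the cycle-counting bijection and come out \emph{exactly} equal to $\prod_{m=1}^b m^{-r_m}$ as soon as $n\geq N=\sum_m m r_m$ (and are $0$ for $n<N$), which matches the joint factorial moments of independent Poisson variables with means $1/m$; your combinatorial count $\frac{n!}{(n-N)!}\prod_m m^{-r_m}\cdot(n-N)!$ is right, including the treatment of ordered tuples of distinct cycles and the $1/m$ per cyclic rotation. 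The one step you correctly flag as needing justification is the passage from convergence of all joint factorial moments to convergence in distribution: this is legitimate here because the limiting vector of independent Poissons is determined by its moments (its moment generating function is finite everywhere), so the multivariate method of moments applies; alternatively one can argue through the joint probability generating function, which the factorial moments determine. Compared with the paper's citation, your route buys a short self-contained proof and even a stronger statement (exact equality of moments for $n\geq N$, hence an explicit rate-free identification of the limit), at the cost of invoking the moment-determinacy argument that the probabilistic coupling approach of \cite{barbour} (used later in Theorem~\ref{thm_feller_conv}) avoids.
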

%

Since $W^{1,n}(f)(x)$ and $W^{1,n+1}(f)(x)$ are defined on different spaces, it is difficult to compare them. Fortunately, the Feller coupling constructs a probability space and new random variables $C_m^{(n)}$ and $Y_m$ on this space, which have the same distributions as the $C_m^{(n)}$ and $Y_m$ above and can easily  be compared. Many more details on the Feller coupling can be found in \cite{barbour}.

The construction works as follows: Let $\xi:=(\xi_1\xi_2\xi_3\xi_4\xi_5\cdots)$ be a sequence of independent Bernoulli-random variables with $\E{\xi_m}=\frac{1}{m}$. An $m-$spacing is a sequence of $m-1$ consecutive zeroes in $\xi$ or its truncations: $$1\underbrace{0\cdots0}_{m-1\text{ times}}1.$$
\begin{definition}
\label{def_cm_feller}
Let $C_m^{(n)}(\xi)$ be the number of m-spacings in $1\xi_2\cdots \xi_n 1$.
We define $Y_m(\xi)$ to be the number of m-spacings in the whole sequence $\xi$.
\end{definition}
\begin{theorem}
\label{thm_feller_conv}We have
          \begin{itemize}
            \item The above-constructed $C_m^{(n)}(\xi)$ have the same distribution as the $C_m^{(n)}(\lambda)$ in definition \ref{def_cm_cycle}.
            \item $Y_m(\xi)$ is Poisson-distributed with $\E{Y_m(\xi)}=\frac{1}{m}$ and all $Y_m(\xi)$ are independent.
            \item $\E{\left|C_m^{(n)}(\xi)-Y_m(\xi)\right|}\leq \frac{2}{n+1}$.
            \item For any fixed $b\in\Nr$,
            $$\Pb{(C_1^{(n)}(\xi),\cdots, C_b^{(n)}(\xi))\neq(Y_1(\xi),\cdots ,Y_b(\xi))}\rw 0\ (n\rw\infty).$$
          \end{itemize}
\end{theorem}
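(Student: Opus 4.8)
The plan is to establish Theorem~\ref{thm_feller_conv} as a sequence of four essentially independent verifications, treating each bullet point in turn; only the third (the coupling inequality) requires genuine work, and I would state it last in the proof even though it is listed third, because it is the crux.

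\textbf{First bullet: distributional identity of $C_m^{(n)}(\xi)$.} The plan is to show directly that the joint law of $\left(C_1^{(n)}(\xi),\cdots,C_n^{(n)}(\xi)\right)$ matches the Ewens/uniform cycle-count distribution of Lemma~\ref{lem_cycle_distribution}. First I would condition on the block structure induced by the $1$'s in $1\xi_2\cdots\xi_n 1$: each position $m\in\set{2,\cdots,n}$ independently contributes a $1$ with probability $\frac{1}{m}$ and a $0$ with probability $1-\frac{1}{m}=\frac{m-1}{m}$. Multiplying these probabilities over a configuration that produces prescribed spacing-counts $c_1,\cdots,c_n$ (with $\sum m c_m = n$), the telescoping of the factors $\frac{m-1}{m}$ along each spacing of length $m$ leaves exactly $\prod_{m=1}^n \left(\frac1m\right)^{c_m}\frac{1}{c_m!}$, where the $\frac{1}{c_m!}$ accounts for the number of ways the $c_m$ spacings of length $m$ can be interleaved among the $n$ slots. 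This matches \eqref{eq_distribution_Cm} exactly, which by Lemma~\ref{lem_cycle_distribution} is the distribution of the cycle counts of a uniform element of $\mathcal{S}_n$. (This combinatorial identity is the content of the Feller coupling and is done in detail in \cite{barbour}; here I would simply sketch the telescoping argument.)

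\textbf{Second bullet: the limiting Poisson variables.} The plan is to observe that $Y_m(\xi)$ counts $m$-spacings in the \emph{full} sequence $\xi$, i.e.\ $Y_m(\xi)=\sum_{j\geq 1}\mathbf{1}\set{\xi_j=1,\xi_{j+1}=\cdots=\xi_{j+m-1}=0,\xi_{j+m}=1}$ (with the convention $\xi_1=1$). One shows $Y_m$ is Poisson$(1/m)$ and that the $Y_m$ are jointly independent; this is the classical fact that the record/spacing structure of the sequence $(\xi_j)$ with $\E{\xi_j}=1/j$ produces independent Poisson spacing counts. Again I would cite \cite{barbour} and merely indicate that it follows from a generating-function computation, or from taking $n\rw\infty$ in the first bullet together with Lemma~\ref{lem_cm_rw_ym}.

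\textbf{Third bullet (the main obstacle) and fourth bullet.} The hard part is the quantitative bound $\E{\abs{C_m^{(n)}(\xi)-Y_m(\xi)}}\leq \frac{2}{n+1}$. The plan is to compare, spacing by spacing, the string $1\xi_2\cdots\xi_n 1$ against the infinite string $\xi$. The two counts $C_m^{(n)}(\xi)$ and $Y_m(\xi)$ differ only because of the artificial $1$ inserted at position $n+1$ (and the boundary at position $n$): an $m$-spacing can be created or destroyed only in the window straddling position $n+1$. Quantifying this, the discrepancy is controlled by the event that position $n+1$ behaves differently from $\xi_{n+1}$, which has the relevant probability $\leq\frac{2}{n+1}$ after a short case analysis on whether $\xi_{n+1},\cdots$ extend a run of zeroes; I would carry out this window analysis carefully, as it is the only step that is not a citation. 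Finally, the fourth bullet follows from the third: by Markov's inequality, $\Pb{C_m^{(n)}(\xi)\neq Y_m(\xi)}\leq\E{\abs{C_m^{(n)}(\xi)-Y_m(\xi)}}\leq\frac{2}{n+1}$, and a union bound over $m\in\set{1,\cdots,b}$ gives $\Pb{(C_1^{(n)},\cdots,C_b^{(n)})\neq(Y_1,\cdots,Y_b)}\leq\frac{2b}{n+1}\rw 0$ as $n\rw\infty$ for fixed $b$. I expect the window analysis in the third bullet to be the only place requiring care; everything else is bookkeeping or a direct appeal to \cite{barbour}.
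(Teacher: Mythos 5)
The paper's own ``proof'' of theorem~\ref{thm_feller_conv} is nothing more than a citation of \cite{barbour} plus the bijection between cycle types and the strings $1\xi_2\cdots\xi_n1$, so your appeals to that reference for the first two bullets are consistent with what the authors do. The genuine problem is in the one place where you promise an actual argument, the third bullet. Your plan rests on the claim that an $m$-spacing ``can be created or destroyed only in the window straddling position $n+1$'', so that the discrepancy is controlled by a single local event of probability $\leq \frac{2}{n+1}$. That is false: $Y_m(\xi)$ counts the $m$-spacings of the \emph{entire infinite} sequence $\xi$, so $Y_m-C_m^{(n)}$ picks up every $m$-spacing of $\xi$ whose closing $1$ lies at a position $\geq n+2$, and these occur arbitrarily far to the right (there may be several, so $|C_m^{(n)}-Y_m|$ is not the indicator of any single event). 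The standard argument, which is what \cite{barbour} carries out, splits $\E{\left|C_m^{(n)}-Y_m\right|}$ into $\E{(C_m^{(n)}-Y_m)^+}+\E{(Y_m-C_m^{(n)})^+}$: the first term is at most $\Pb{B_m^{(n)}}=\frac{1}{n+1}$ (the spacing created artificially by the appended $1$, cf.\ lemma~\ref{lem_properties_of_feller}), while the second is the expected number of $m$-spacings of $\xi$ starting at positions $j\geq n+2-m$, which requires the computation $\Pb{\text{an $m$-spacing starts at }j}=\frac{1}{(j+m-1)(j+m)}$ and the telescoping tail sum $\sum_{j\geq n+2-m}\frac{1}{(j+m-1)(j+m)}=\frac{1}{n+1}$. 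No case analysis confined to a neighbourhood of position $n+1$ can produce this second half of the bound, so as written your third bullet has a real gap; once it is repaired, your Markov-plus-union-bound deduction of the fourth bullet is fine.

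A smaller point: the ``telescoping'' you sketch for the first bullet is not correct configuration by configuration. Strings with the same spacing counts have different probabilities (for $n=3$, the strings $\xi_2\xi_3=10$ and $\xi_2\xi_3=01$ both give $c_1=c_2=1$ but have probabilities $\frac{1}{3}$ and $\frac{1}{6}$), so the identity with $\prod_{m}\left(\frac{1}{m}\right)^{c_m}\frac{1}{c_m!}$ only emerges after summing over all interleavings of the spacings; the factor $\frac{1}{c_m!}$ is not a counting correction attached to a single string. Since you defer to \cite{barbour} there anyway — exactly as the paper does — this is a misleading sketch rather than a fatal error, but it would not survive being written out in full.
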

\begin{proof}
For a full proof, we refer the reader to \cite{barbour}. We only need to mention for what follows the bijection between cycle-types for permutations in $\mathcal{S}_n$ and sequences of 0s and 1s $1\xi_2\cdots \xi_n 1$, obtained by writing a $1$ for closing a cycle and $0$ otherwise. For instance, the permutation $(1,2,3,4,5,6,7,8,9)(10,11,12)(13,14)(15,16)(17)$ in $\mathcal{S}_17$, of  cycle-type $(9,3,2,2,1)$ is mapped to the sequence $10000000100101011$. One remarks that $m$-spacings then correspond to $m$-cycles.
\end{proof}
%

%
We will use in the rest of this section only the random variables $C_m^{(n)}(\xi)$ and $Y_m(\xi)$. We thus just write $C_m^{(n)}$ and $Y_m$ for them. One might guess from the definition that $C_m^{(n)}\leq Y_m$, but this is not true. It is indeed possible that $C_m^{(n)}= Y_m+1$, but this can only happen if $\xi_{n-m}\cdots\xi_{n+1}=10\cdots 0$. If $n$ is fixed, we have at most one such $m$ with $C_m^{(n)}=Y_m+1$. In order to state the following lemma, we set
\begin{align}\label{eq_def_Bm}
B_m^{(n)}:=\set{\xi : \xi_{n-m}\cdots\xi_{n+1}=10\cdots 0}.
\end{align}
\begin{lemma}
\label{lem_properties_of_feller}
We have:
\begin{itemize}
        \item $C_m^{(n)}\leq Y_m+\mathbf{1}_{B_m^{(n)}}$
        \item $\Pb{B_m^{(n)}}=\frac{1}{n+1}$
	\item $C_m^{(n)}$ does not converge a.s.~against $Y_m$.
\end{itemize}
\end{lemma}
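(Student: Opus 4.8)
**The plan is to prove each of the three bullets of Lemma~\ref{lem_properties_of_feller} separately, since they are largely independent.**

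For the first bullet, I would argue combinatorially from Definition~\ref{def_cm_feller}. The quantity $Y_m$ counts $m$-spacings in the full sequence $\xi = (1\xi_2\xi_3\cdots)$, i.e.\ occurrences of the pattern $1\underbrace{0\cdots0}_{m-1}1$, while $C_m^{(n)}$ counts them in the truncated-and-capped word $1\xi_2\cdots\xi_n 1$. Every $m$-spacing entirely contained in positions $1$ through $n$ is counted by both; the only way $C_m^{(n)}$ can exceed $Y_m$ is if the terminal $1$ appended at position $n+1$ creates a new $m$-spacing that was not present in $\xi$ itself, which requires positions $n-m+1,\dots,n$ to read $1\,0\cdots0$ (a $1$ followed by $m-1$ zeroes), i.e.\ exactly the event $B_m^{(n)}$ (up to reindexing matching \eqref{eq_def_Bm}). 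Conversely, no genuine $m$-spacing of $\xi$ is destroyed by the truncation except possibly one straddling position $n$, but such a straddling spacing would have its opening $1$ at a position $\le n$ and its closing $1$ at a position $>n+1$, hence would contribute to neither $C_m^{(n)}$ nor $Y_m$ — wait, it contributes to $Y_m$; this is the subtle point, and I would handle it by noting that on the complement of $B_m^{(n)}$ the appended $1$ either completes such a straddling spacing (replacing the lost $Y_m$-contribution) or does nothing, so in all cases $C_m^{(n)} \le Y_m + \mathbf{1}_{B_m^{(n)}}$. This careful case analysis on the window near position $n$ is the one genuinely fiddly part, and I expect it to be the main obstacle.

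For the second bullet, $\Pb{B_m^{(n)}} = \Pb{\xi_{n-m}=1,\ \xi_{n-m+1}=\cdots=\xi_{n+1}=0}$. Since the $\xi_j$ are independent Bernoulli with $\E{\xi_j}=1/j$, this is
\begin{align*}
\Pb{B_m^{(n)}}
= \frac{1}{n-m}\cdot\prod_{j=n-m+1}^{n+1}\Bigl(1-\frac{1}{j}\Bigr)
= \frac{1}{n-m}\cdot\prod_{j=n-m+1}^{n+1}\frac{j-1}{j}
= \frac{1}{n-m}\cdot\frac{n-m}{n+1}
= \frac{1}{n+1},
\end{align*}
using the telescoping of the product. (One checks the edge case $m=1$, where $B_1^{(n)} = \{\xi_{n-1}=1,\ \xi_n=0,\ \xi_{n+1}=0\}$ or the appropriate degenerate reading, gives the same answer; I would state the convention so that the identity $\Pb{B_m^{(n)}}=1/(n+1)$ holds uniformly.)

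For the third bullet, I would argue that $C_m^{(n)}$ fails to be Cauchy almost surely, hence cannot converge a.s. Indeed, for the event $B_m^{(n)}$ we have $C_m^{(n)} = Y_m + 1$, so $C_m^{(n)} \neq C_m^{(n')}$ for suitable larger $n'$ whenever $B_m^{(n)}$ occurs and $B_m^{(n')}$ does not. More cleanly: by the previous bullet $\Pb{B_m^{(n)}} = \tfrac{1}{n+1}$, and the events $B_m^{(n)}$ for $n$ ranging over an arithmetic-progression-like sparse subsequence (chosen so their defining coordinate-windows are disjoint) are independent with probabilities summing to infinity, so by the second Borel--Cantelli lemma $B_m^{(n)}$ occurs infinitely often a.s. On that event $C_m^{(n)} = Y_m+1 \neq Y_m$ infinitely often; since $Y_m$ is a fixed finite random variable, $C_m^{(n)}$ takes the value $Y_m+1$ infinitely often while also (by Theorem~\ref{thm_feller_conv}, last bullet, or directly) returning to $Y_m$ infinitely often, so $(C_m^{(n)})_n$ does not converge. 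The only care needed is selecting the sparse subsequence of $n$'s so that the windows $\{n-m,\dots,n+1\}$ are pairwise disjoint, guaranteeing independence; this is routine.
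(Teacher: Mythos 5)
Your proof is correct, and the first two bullets run along the same lines as the paper (which is even terser there: the inequality is read off from the discussion preceding the lemma, and the probability is the same telescoping product you wrote, insensitive to the off-by-one in the indexing of $B_m^{(n)}$). Where you genuinely diverge is the third bullet. The paper argues pathwise: it fixes a long spacing $\xi_v\cdots\xi_{v+m+1}=10\cdots01$ and observes that as $n$ sweeps through this window, $C_{m_0}^{(n)}$ jumps to $C_{m_0}^{(v)}+1$ at exactly $n=v+m_0$ and drops back immediately after; since a.s.\ $Y_j<\infty$ for every $j$ while $\sum_m Y_m=\infty$, there are a.s.\ infinitely many spacings longer than any fixed $m_0$, so this oscillation recurs forever and convergence fails. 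Your route instead applies the second Borel--Cantelli lemma to a sparsified sequence of the events $B_m^{(n_k)}$ with pairwise disjoint coordinate windows (so independence holds and $\sum_k \Pb{B_m^{(n_k)}}=\sum_k\tfrac{1}{n_k+1}=\infty$ along an arithmetic progression of step about $m+2$), concluding that the spurious cap-closed spacing appears infinitely often a.s. Both are valid; the paper's version needs no probability estimate for $B_m^{(n)}$ beyond the a.s.\ structure of $\xi$, while yours reuses the second bullet and is arguably more mechanical. One small imprecision to fix in your write-up: on $B_m^{(n)}$ you do not always have $C_m^{(n)}=Y_m+1$, but rather $C_m^{(n)}=\#\set{\text{genuine }m\text{-spacings closed by position }n}+1$; since $Y_m<\infty$ a.s., all genuine $m$-spacings lie in an a.s.\ finite initial segment, so the identity $C_m^{(n)}=Y_m+1$ holds for all sufficiently large $n$ in your Borel--Cantelli subsequence, which is all the argument needs (and already $C_m^{(n)}=Y_m+1$ infinitely often rules out a.s.\ convergence to $Y_m$, so the ``returning to $Y_m$'' step is optional).
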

\begin{proof}
The first point follows form the above considerations.
The second point is a simple calculation using the independence of $\xi_i$.
We illustrate the proof of the last point with an example.
Let $\xi=(100010001\cdots)$ be given. Then
\begin{center}
\begin{tabular}{|c|c|c|c|c|c|c|c|c|}
  \hline
  $n$         & 1 & 2 & 3 & 4 & 5 & 6 & 7 & 8  \\ \hline
  $C_1^{(n)}$ & 1 &   &   &   & 1 &   &   &    \\
  $C_2^{(n)}$ &   & 1 &   &   &   & 1 &   &    \\
  $C_3^{(n)}$ &   &   & 1 &   &   &   & 1 &    \\
  $C_4^{(n)}$ &   &   &   & 1 & 1 & 1 & 1 & 2  \\
  \hline
\end{tabular}
\end{center}
The general case is completely similar to this example.
Let $\xi_v\xi_{v+1}\cdots \xi_{v+m+1}=10\cdots01$. We then have for $1\leq{m_0}\leq m-1$ and $v\leq n\leq v+m+1$
$$C_{m_0}^{(n)}=\left\{
              \begin{array}{ll}
                C_{m_0}^{(v)}+1, & \hbox{if } n=v+m_0,\\
                C_{m_0}^{(v)}, & \hbox{if } n\neq v+m_0.
              \end{array}
            \right.
$$
Since all $Y_m<\infty$ a.s. and $\sum_{m=1}^\infty Y_m=\infty$ a.s.~we are done.
\end{proof}

\subsection{The limit distribution}
\label{sec_the_limit}
In this subsection we write down a possible limit of $W^1(f)(x)$ and show that it is a good candidate.
%
%

If a $\sigma\in \mathcal{C}_\la$ with $|\la|=n$ is given then
\begin{align*}
W^{1,n}(f)(x)=\prod_{m=1}^{n}f(x^m)^{C_m^{(n)}}.
\end{align*}
We know that $C_m^{(n)}\xrightarrow{d} Y_m$ and so a natural and  possible limit would be
\begin{align}\label{eq_w1_infty_g_Ym}
f_\infty(x):=W^{1,\infty}(f)(x):=\prod_{m=1}^{\infty}f(x^m)^{Y_m}.
\end{align}
We prove in lemma~\ref{lem_Zn rw Z_infty} that $W^{1,n}(f)\xrightarrow{d} W^{1,\infty}(f)$.
Of course there are many things we need to check. We start with
\begin{lemma}
\label{lem_g_infty_holo}
The function $f_\infty(x)$ is a.s.~a holomorphic function of $x \in B_{r}(0)$.
\end{lemma}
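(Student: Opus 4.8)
The plan is to show that the random infinite product $f_\infty(x)=\prod_{m=1}^\infty f(x^m)^{Y_m}$ converges locally uniformly on $B_r(0)$ almost surely, whence its limit is holomorphic by the standard theorem on uniform limits of holomorphic functions. First I would fix a radius $0<r<\min\{1,r_1\}$ as arranged in section~\ref{sec_easy_facts}, so that $M:=\sup_{|x|\le r}|f(x)|<\infty$ and also $f(0)=b_{0,0}=1$ (after the reduction of lemma~\ref{lem_reduction_to_1} and the replacement of $f$ by $\widetilde f$). Since $f$ is holomorphic on a neighbourhood of $0$ with $f(0)=1$, there is a radius $\rho\le r$ and a constant $K$ with $|f(x)-1|\le K|x|$ for $|x|\le \rho$; shrinking $r$ to $\rho$ if necessary, I may assume this holds on all of $B_r(0)$. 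Then $\log f(x)$ (principal branch) is well defined and holomorphic on $B_r(0)$ once $K r<1$, with $|\log f(x)|\le C|x|$ there for a suitable $C$.

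The key estimate is then: for $|x|\le r$,
\begin{align}\label{eq_prop_tail_est}
\sum_{m=1}^\infty \bigl| Y_m \log f(x^m)\bigr|
\;\le\; C\sum_{m=1}^\infty Y_m\, r^{m}.
\end{align}
So it suffices to show $\sum_{m=1}^\infty Y_m r^m<\infty$ almost surely. This follows from the monotone convergence theorem and the fact that $Y_m$ is Poisson with $\E{Y_m}=1/m$ (lemma~\ref{lem_cm_rw_ym}, or the Feller-coupling statement in theorem~\ref{thm_feller_conv}): indeed
\begin{align}\label{eq_prop_exp_finite}
\E{\sum_{m=1}^\infty Y_m r^m}
=\sum_{m=1}^\infty \frac{r^m}{m}
=-\log(1-r)<\infty,
\end{align}
so the nonnegative random variable $\sum_m Y_m r^m$ has finite expectation and is therefore a.s.\ finite. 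Hence on the almost-sure event $\{\sum_m Y_m r^m<\infty\}$, the series $\sum_m Y_m\log f(x^m)$ converges absolutely and uniformly on $\close{B_{r'}(0)}$ for every $r'<r$, by \eqref{eq_prop_tail_est} with $r$ replaced by $r'$; its sum is a holomorphic function of $x$ on $B_r(0)$, being a locally uniform limit of holomorphic partial sums. Exponentiating, $f_\infty(x)=\exp\bigl(\sum_m Y_m\log f(x^m)\bigr)$ is a.s.\ holomorphic on $B_r(0)$, and since it is everywhere nonzero there is no branch ambiguity in identifying it with the product \eqref{eq_w1_infty_g_Ym}.

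I expect the only genuine subtlety — the ``main obstacle'' — to be the bookkeeping needed to pass from $f$ to a function with $f(0)=1$ that is moreover uniformly close to $1$ on the disc on which one works, i.e.\ making precise the shrinking of $r$ so that both $\log f$ is single-valued and the factors $f(x^m)$ for $m\ge 1$ all lie in the domain of the principal logarithm; this is exactly the sort of bound $\sup|f(x)|<\infty$ flagged in section~\ref{sec_easy_facts}. Everything else is routine: the probabilistic input is just $\E{Y_m}=1/m$ and $\sum_m r^m/m<\infty$, and the analytic input is the classical Weierstrass theorem on locally uniform limits of holomorphic functions. One should also note in passing that the a.s.\ event in \eqref{eq_prop_exp_finite} can be taken independent of $x$, so that ``a.s.\ holomorphic on $B_r(0)$'' is literally correct rather than merely ``a.s.\ holomorphic at each fixed $x$''.
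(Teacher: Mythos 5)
Your overall strategy is the same as the paper's (take logarithms factorwise, bound $|\log f(x^m)|$ by a constant times $r^m$, show $\sum_m Y_m r^m<\infty$ a.s., and conclude by locally uniform convergence), and your argument for the a.s.\ finiteness of $\sum_m Y_m r^m$ via Tonelli and $\E{\sum_m Y_m r^m}=\sum_m r^m/m<\infty$ is actually simpler than the paper's appeal to Borel--Cantelli. However, there is one genuine gap: the step ``shrinking $r$ to $\rho$ if necessary, I may assume $|f(x)-1|\le K|x|$ on all of $B_r(0)$'' changes the statement being proved. The radius $r$ in the lemma is an arbitrary fixed number in $(0,\min\set{1,r_1})$, and the later arguments (the dominating variable $F(r)$, the Montel step, and the final extension of the convergence from $B_u(0)$ to $B_r(0)$) need holomorphy of $f_\infty$ on this full disc, with $r$ eventually taken close to $\min\set{1,r_1}$. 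Your $\rho$ depends on $f$ and can be forced to be much smaller: on $B_r(0)$ the function $f$ need not be close to $1$, and may even vanish or take values in $\Rr_{\le 0}$, so $\log f(x^m)$ is simply not defined for the small values of $m$, and your representation $f_\infty=\exp\bigl(\sum_m Y_m\log f(x^m)\bigr)$ breaks down there. For the same reason your closing remark that $f_\infty$ is ``everywhere nonzero'' on $B_r(0)$ is false in general: if $f(x_0^m)=0$ and $Y_m\ge 1$, the product vanishes at $x_0$.

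The missing idea, which is exactly how the paper handles this, is to split off finitely many initial factors rather than shrink the disc: choose $m_0$ so large that for all $m\ge m_0$ and all $x\in B_r(0)$ the point $x^m$ lies in a small disc around $0$ on which $f$ is uniformly close to $1$ (writing $f(x)=1+xh(x)$, require $|x^m h(x^m)|<r<1$). Your logarithm estimate and the a.s.\ convergence of $\sum_{m\ge m_0} Y_m r^m$ then apply verbatim to the tail $\prod_{m\ge m_0} f(x^m)^{Y_m}$, giving its a.s.\ holomorphy on all of $B_r(0)$. The head $\prod_{m<m_0} f(x^m)^{Y_m}$ is a.s.\ a finite product of nonnegative integer powers of functions holomorphic on $B_r(0)$ (since each $Y_m<\infty$ a.s.), so no logarithm is needed for it, zeros and all. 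With this modification your proof is complete and proves the lemma on the stated disc.
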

\begin{proof}
We first mention that
$\log\left((1-x)^{m}\right)\equiv m \log\left(1-x\right)\mod 2\pi i$ for $m\in \Nr$.
The argument of $\log$ is always in $[-\pi,\pi]$ and therefore
$$\left|\log\left((1-x)^{m}\right)\right|\leq  m \left|\log\left(1-x\right)\right|\text{ for }m\in\Nr.$$
We write next $f(x)=1+xh(x)$ with $h$ holomorphic in $B_{r}(0)$.
Choose $m_0\in\Nr$ such that $|x^m h(x^m)|<r<1$ for all $m\geq m_0$ and all $x\in B_{r}(0)$.
We define $\delta=\delta(r)=\sup_{x\in B_{r}(0)} |h(x)|$ and remember the general fact that there exists $\beta=\beta(r)$ with $\left|\log(1+x)\right|\leq \beta |x|$ for $|x|<r$. We get
\begin{multline*}
\left|\log\left(\prod_{m=m_0}^\infty f(x^m)^{Y_m}\right)\right|
\leq \sum_{m=m_0}^\infty Y_m |\log\bigl(1+x^mh(x^m)\bigr)|\\
\leq \beta\sum_{m=m_0}^\infty Y_m |x^mh(x^m)|
\leq \delta \beta \sum_{m=m_0}^\infty Y_m r^m.
\end{multline*}
It remains to show that the last sum is a.s.~finite. This can be shown with the Borel-Cantelli theorem (see \cite{DZ}). Therefore $\prod_{m=m_0}^\infty f(x^m)^{Y_m}$ is a.s. a holomorphic function in $x$ and so is $f_\infty(x)$.
\end{proof}
We have proven that $f_\infty(x)$ is a.s.~a holomorphic function. This does not imply the holomorphicity of $\E{f_\infty(x)}$, even when it exists. We therefore prove
\begin{lemma}
\label{lem_E(g_infty)_holo_and_value}Let $f(x) := \sum_k b_kx^k$ and $x\in B_r(0)$. Then all moments of $f_\infty(x)$ exist.
The expectation $\E{f_\infty(x)}$ is a holomorphic function on $B_r(0)$ with
$$\E{f_\infty(x)}=\prod_{k=1}^\infty \frac{1}{(1-x^k)^{b_k}}.$$
\end{lemma}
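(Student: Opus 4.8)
The plan is to compute $\E{f_\infty(x)}$ directly from the definition $f_\infty(x)=\prod_{m=1}^\infty f(x^m)^{Y_m}$, using the independence of the Poisson variables $Y_m$ and a term-by-term factorization of the expectation, and then to match the answer with Lemma~\ref{lem_symm_fn}. First I would treat the finite partial products $P_N(x):=\prod_{m=1}^N f(x^m)^{Y_m}$. By independence of $Y_1,\dots,Y_N$, one has $\E{P_N(x)}=\prod_{m=1}^N \E{f(x^m)^{Y_m}}$. For a single factor, write $f(x^m)=1+x^m h(x^m)$ as in the proof of Lemma~\ref{lem_g_infty_holo}, so that (for $x\in B_r(0)$ and $m$ large enough that $|x^m h(x^m)|<1$) the quantity $f(x^m)^{Y_m}=\exp(Y_m\log f(x^m))$ is well defined, and since $Y_m$ is Poisson with mean $1/m$ we get the moment generating function identity
\begin{align*}
\E{f(x^m)^{Y_m}}=\E{e^{Y_m\log f(x^m)}}=\exp\left(\tfrac{1}{m}\bigl(f(x^m)-1\bigr)\right).
\end{align*}
Hence $\E{P_N(x)}=\exp\left(\sum_{m=1}^N \tfrac{1}{m}(f(x^m)-1)\right)$. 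Letting $N\to\infty$, the sum converges absolutely for $x\in B_r(0)$ because $f(x^m)-1=O(r^m)$, and expanding $f(x^m)-1=\sum_{k\ge 1} b_k x^{km}$ and exchanging the two sums (justified exactly as in the proof of Lemma~\ref{lem_gen_allg}, since $\sum_{m,k}\frac{1}{m}|b_k||x|^{km}<\infty$) gives
\begin{align*}
\sum_{m=1}^\infty \tfrac{1}{m}\bigl(f(x^m)-1\bigr)=\sum_{k=1}^\infty b_k\sum_{m=1}^\infty \tfrac{(x^k)^m}{m}=-\sum_{k=1}^\infty b_k\log(1-x^k),
\end{align*}
so that $\lim_{N\to\infty}\E{P_N(x)}=\prod_{k=1}^\infty (1-x^k)^{-b_k}$, which is the claimed value.

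The remaining, and genuinely delicate, point is to justify passing the limit inside the expectation, i.e.\ $\E{f_\infty(x)}=\lim_{N\to\infty}\E{P_N(x)}$, and simultaneously to establish existence of all moments. For this I would produce an integrable dominating function. From the estimate in the proof of Lemma~\ref{lem_g_infty_holo}, for $m_0$ chosen so that $|x^m h(x^m)|<r<1$ for all $m\ge m_0$ and all $x\in B_r(0)$, we have $\left|\log\prod_{m=m_0}^N f(x^m)^{Y_m}\right|\le \delta\beta\sum_{m=m_0}^\infty Y_m r^m=:S$, uniformly in $N$, where $\delta,\beta$ depend only on $r$. Hence $|P_N(x)|\le C\cdot e^{S}$ where $C$ bounds the (random but $N$-independent) contribution of the finitely many factors $m<m_0$; in fact that finite part is bounded by $\bigl(\max_{|y|\le r}|f(y)|\bigr)^{Y_1+\cdots+Y_{m_0-1}}$. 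So it suffices to show $\E{e^{\gamma S'}}<\infty$ for every $\gamma>0$, where $S'=\sum_{m\ge 1} Y_m r^m$ (absorbing the finite part, which only involves finitely many $Y_m$, is then automatic). Since the $Y_m$ are independent Poisson$(1/m)$, $\E{e^{\gamma S'}}=\prod_m \E{e^{\gamma r^m Y_m}}=\prod_m \exp\!\left(\tfrac1m(e^{\gamma r^m}-1)\right)$, and $\frac1m(e^{\gamma r^m}-1)=O(r^m)$ is summable, so the product converges. This gives a dominating random variable in $L^1$ (indeed in every $L^p$), which both yields existence of all moments of $f_\infty(x)$ and, via dominated convergence, the interchange of limit and expectation.

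Finally, holomorphicity of $x\mapsto \E{f_\infty(x)}$ on $B_r(0)$ follows either by noting that the explicit product $\prod_{k=1}^\infty(1-x^k)^{-b_k}$ converges locally uniformly on $B_r(0)$ (the logarithm $-\sum_k b_k\log(1-x^k)$ converges locally uniformly since $b_k\log(1-x^k)=O(r^k)$ on compact subsets), or, more in the spirit of the paper, by applying Morera's theorem together with Fubini: for any triangle $\Delta\subset B_r(0)$, $\oint_{\partial\Delta}\E{f_\infty(x)}\,dx=\E{\oint_{\partial\Delta} f_\infty(x)\,dx}=0$, where the exchange is licensed by the same uniform $L^1$-dominating bound and $\oint_{\partial\Delta}f_\infty(x)\,dx=0$ a.s.\ by Lemma~\ref{lem_g_infty_holo}. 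The main obstacle is thus entirely the construction of the uniform integrable majorant for $\{P_N(x)\}_N$; once that is in hand, the computation and the analytic conclusions are routine.
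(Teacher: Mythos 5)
Your proposal is correct, and it reaches the same core computation as the paper (the Poisson generating-function identity $\E{z^{Y_m}}=\exp\bigl(\tfrac{1}{m}(z-1)\bigr)$, factorization over independent $Y_m$, and the exchange of the double sum to produce $\prod_k(1-x^k)^{-b_k}$), but the surrounding justifications follow a genuinely different route. Where the paper justifies pulling the expectation through the infinite product by its Step~1 absolute-convergence bound $\E{\prod_m(1+\delta r^m)^{Y_m}}<\infty$, you work with the finite partial products $P_N$ and construct an explicit $N$- and $x$-uniform majorant $M^{Y_1+\cdots+Y_{m_0-1}}e^{\gamma S'}$ with all exponential moments finite, then invoke dominated convergence; this is a cleaner and more explicitly licensed interchange (the paper's ``justified by step 1'' is terser on exactly this point). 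The bigger divergence is in the moments: the paper uses the multiplicative identity $W^1(f_1)(x)\,W^1(f_2)(x)=W^1(f_1f_2)(x)$ to reduce higher moments of $f_\infty(x)$ to the first-moment computation applied to $f^j$, which has the bonus of giving the \emph{values} of all moments as explicit products, whereas your $L^p$-for-all-$p$ dominating variable gives existence of all moments directly but not their values (which is all the lemma asserts, so this is fine). Your Morera-plus-Fubini alternative for holomorphicity is also sound, since your majorant is uniform over $B_r(0)$ and $f_\infty$ is a.s.\ holomorphic by lemma~\ref{lem_g_infty_holo}; the paper instead checks locally uniform convergence of $\sum_k b_k\log(1-x^k)$, matching your first option. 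One small simplification: since $Y_m$ is integer-valued, $f(x^m)^{Y_m}$ needs no logarithm to be defined and the identity $\E{f(x^m)^{Y_m}}=\exp\bigl(\tfrac1m(f(x^m)-1)\bigr)$ holds for every $m$ and every complex value $f(x^m)$, so your restriction to $m$ large enough that $|x^mh(x^m)|<1$ is unnecessary (though harmless).
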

\begin{proof}
\begin{description}
\item[Step 1] We show that $\E{f_\infty(x)}$ exists.
We define $h$ and $\delta$ as above and obtain
\begin{multline*}
\E{\left|\prod_{m=1}^\infty f(x^m)^{Y_m}\right|}
=
\E{\prod_{m=1}^\infty \left|\bigl(1+x^mh(x^m)\bigr)^{Y_m}\right|}
\leq
\E{\prod_{m=1}^\infty (1+\delta r^m)^{Y_m}}\\
\substack{(*)\\=}
\prod_{m=1}^\infty \exp\left(\frac{(1+\delta r^m)-1}{m}\right)
=\exp\left(\delta \sum_{m=1}^\infty \frac{r^m}{m}\right)<\infty,
\end{multline*}
where in $\substack{(*)\\=}$ we have used that
         $$\E{y^{(Y_m)}}=\exp\left(\frac{y-1}{m}\right)\text{ for }y\geq0$$
when $Y_m$ is a Poisson distributed random variable with $\E{Y_m}=\frac{1}{m}$. This can be shown by a simple calculation, expanding the exponential series.
 This proves the existence of $\E{f_\infty(x)}$.
\item[Step 2] We calculate the value of $\E{f_\infty(x)}$:
\begin{multline*}
\E{f_\infty(x)}
=
\E{\prod_{m=1}^\infty f(x^m)^{Y_m}}
=
\prod_{m=1}^\infty \E{f(x^m)^{Y_m}}
=
\prod_{m=1}^\infty \exp\left(\frac{f(x^m)-1}{m}\right)\\
=
\exp\left(\sum_{m=1}^\infty \frac{1}{m}\sum_{k=1}^\infty b_k x^{mk} \right)
=
\exp\left(\sum_{k=1}^\infty b_k \sum_{m=1}^\infty \frac{x^{km}}{m} \right)\\
=
\exp\left(\sum_{k=1}^\infty b_k \bigl(-\log(1-x^k)\bigr)\right)
=
\prod_{k=1}^\infty \frac{1}{(1-x^k)^{b_k}}.
\end{multline*}
The exchange of the exponential and the product in the first line is justified by step 1 and the exchange of the two sums as follows: the convergence radius of the Taylor expansion of $f$ is at least $r_1$ and so
$$\limsup_{k\rw\infty}|b_k|^{1/k}
\leq
\frac{1}{r_1}<\frac{1}{r}$$
Therefore there exists a constant $C$ with $|b_k|<C(\frac{1}{r_1})^k$. We define $r':=\frac{1}{r_1}$.
Clearly $r'r<1$, and so
\begin{eqnarray*}
\sum_{m=1}^\infty \sum_{k=1}^\infty \left|b_k \frac{x^{mk}}{m}\right|
\leq
C\sum_{m=1}^\infty \sum_{k=1}^\infty \frac{1}{m}(r')^kr^{mk}
=
C\sum_{m=1}^\infty \sum_{k=1}^\infty \frac{1}{m}(r'r^m)^k
=
C\sum_{m=1}^\infty \frac{1}{m} \frac{r'r^m}{1-r'r^m}
<\infty.
\end{eqnarray*}
\item[Step 3] Holomorphicity of $\E{f_\infty(x)}$:
\begin{align*}
\left|\log\left(\prod_{k=1}^\infty \frac{1}{(1-x^k)^{b_k}}\right)\right|
&\leq
\sum_{k=1}^\infty |b_k| |\log(1-x^k)|
\leq
\beta \sum_{k=1}^\infty |b_k| |x^k|
<
\infty
\end{align*}
\item[Step 4] Existence of the moments. Let $\sigma \in \mathcal{C}_\lambda$.
We have
\begin{multline*}
\Bigl(W^1(f_1)(x) W^1(f_2)(x)\Bigr)(\sigma)
=
\left(\prod_{m=1}^{l(\la)} f_1(\theta_m x^{\la_m})\right) \left(\prod_{m=1}^{l(\la)} f_2(\theta_m x^{\la_m})\right)
=\\
\prod_{m=1}^{l(\la)} \Bigl(f_1(\theta_m x^{\la_m}) f_2(\theta_m x^{\la_m})\Bigr)
=
\prod_{m=1}^{l(\la)} (f_1 f_2)(\theta_m x^{\la_m})
=
\left(W^1(f_1f_2)(x)\right) (\sigma),
\end{multline*}
and step 4 now follows from steps 1, 2, and 3.
\end{description}
\end{proof}


\subsection{Convergence against the limit}
\label{sec_conv_for_x<1_holo}
We give in this section two proofs of theorem~\ref{thm_conv_x<1_holo}. The idea of the first proof is to show $W^{1,n}(f)(x)\xrightarrow{d}W^{1,\infty}(f)(x)$ and then use uniform integrability. The idea of the second is to show that $\E{W^{1,n}(f)(x)}\rw \E{W^{1,\infty}(f)(x)}$ for $x\in[0,r']$ with $r'$ small enough and then to apply the theorem of Montel.

Note that the second proof does not imply $W^{1,n}(f)(x)\xrightarrow{d}W^{1,\infty}(f)(x)$. One would need that $W^{1,n}(f)(x)$ is uniquely determined by its moments. One possibility to check this is \emph{Carleman's condition} (see \cite{Billingsley}). Unfortunately it is very difficult to apply it directly to $W^{1,\infty}(f)(x)$, it is much easier to apply it to the upper bound $F(r)$ (see below). The problem is that $F(r)$ does not fulfill Carleman's condition.

\subsubsection{First proof of theorem~\ref{thm_conv_x<1_holo}}
We first prove
\begin{lemma}
\label{lem_Zn rw Z_infty}
Choose an $0<u\ (\leq r)$ such that $f(x)\neq 0$ for $x\in B_{u}(0)$.
We have for all fixed $x\in B_{u}(0)$
\begin{align}
\sum_{m=1}^n C_m^{(n)}\log\bigl(f(x^m)\bigr)
&\xrightarrow{d}
\sum_{m=1}^\infty Y_m \log\bigl(f(x^m)\bigr) \qquad (n\rw\infty)\\
W^{1,n}(f)(x)
&\xrightarrow{d}
W^{1,\infty}(f)(x) \qquad (n\rw\infty)
\end{align}
\end{lemma}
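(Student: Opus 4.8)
The plan is to transfer the convergence from the Feller coupling of cycle counts to the random variables of interest. Everything lives on the single probability space provided by the Feller coupling (theorem~\ref{thm_feller_conv}), where the $C_m^{(n)}$ and the $Y_m$ are realized simultaneously, with $\E{|C_m^{(n)}-Y_m|}\leq \frac{2}{n+1}$. Since we work at fixed $x\in B_u(0)$ with $f(x)\neq 0$ on $B_u(0)$, the quantities $\log(f(x^m))$ are well-defined complex numbers and, as in the proof of lemma~\ref{lem_g_infty_holo}, satisfy a bound of the form $|\log(f(x^m))|\leq \delta\beta\, u^m$ for $m\geq m_0$, so the tail of $\sum_{m} Y_m\log(f(x^m))$ is a.s. absolutely convergent and the candidate limit $\sum_{m=1}^\infty Y_m\log(f(x^m))$ makes sense a.s.

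First I would reduce to the first displayed convergence: the second follows from the first by the continuous mapping theorem, since $z\mapsto e^z$ is continuous and $W^{1,n}(f)(x)=\exp\bigl(\sum_m C_m^{(n)}\log(f(x^m))\bigr)$ on the event that $f(x^m)\neq 0$ for all relevant $m$ (true for $x\in B_u(0)$), with the analogous identity for $W^{1,\infty}(f)(x)$ from \eqref{eq_w1_infty_g_Ym}. For the first convergence, split the sum at a cutoff $b$: write
\begin{align*}
\sum_{m=1}^n C_m^{(n)}\log\bigl(f(x^m)\bigr)
=
\sum_{m=1}^{b} C_m^{(n)}\log\bigl(f(x^m)\bigr)
+
\sum_{m=b+1}^{n} C_m^{(n)}\log\bigl(f(x^m)\bigr).
\end{align*}
By theorem~\ref{thm_feller_conv} the vector $(C_1^{(n)},\dots,C_b^{(n)})$ equals $(Y_1,\dots,Y_b)$ with probability tending to $1$, so the first (finite) sum converges a.s., hence in distribution, to $\sum_{m=1}^b Y_m\log(f(x^m))$. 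For the tail, I would bound its expected modulus: using $C_m^{(n)}\leq Y_m+\mathbf{1}_{B_m^{(n)}}$ from lemma~\ref{lem_properties_of_feller} together with $\E{Y_m}=\frac1m$ and $\Pb{B_m^{(n)}}=\frac{1}{n+1}$, and the bound $|\log(f(x^m))|\leq \delta\beta u^m$,
\begin{align*}
\E{\left|\sum_{m=b+1}^{n} C_m^{(n)}\log\bigl(f(x^m)\bigr)\right|}
\leq
\delta\beta\sum_{m=b+1}^\infty \left(\frac1m + \frac{1}{n+1}\right) u^m
\leq
\delta\beta\sum_{m=b+1}^\infty \left(\frac1m+1\right) u^m,
\end{align*}
which is the tail of a convergent series and hence can be made smaller than any $\eps$ by choosing $b$ large, uniformly in $n$. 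The same bound applied to $\sum_{m>b} Y_m\log(f(x^m))$ controls the tail of the limit. A standard $3\eps$ / Slutsky-type argument (convergence in $L^1$ of the tails implies convergence in probability, which combines with the distributional convergence of the truncated sums) then yields $\sum_{m=1}^n C_m^{(n)}\log(f(x^m))\xrightarrow{d}\sum_{m=1}^\infty Y_m\log(f(x^m))$.

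The main obstacle is the uniformity in $n$ of the tail estimate: one must be careful that the term $\mathbf{1}_{B_m^{(n)}}$, although it contributes $\frac1{n+1}$ in expectation for each $m$, is summed over $m$ up to $n$, so the naive bound $\sum_{m=b+1}^n \frac{1}{n+1}u^m$ must be controlled by the geometric decay $u^m$ rather than by the number of terms — which is exactly why we shrank $x$ to $B_u(0)$ at the start of section~\ref{sec_easy_facts}. Once this uniform smallness of the tails is in hand, patching the truncated distributional limit to the full one is routine. I would close by remarking that the argument is written for $f$ of one variable and with $\theta\equiv\vth\equiv1$ as arranged in section~\ref{sec_easy_facts}, so no generality is lost.
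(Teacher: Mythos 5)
Your argument is correct in substance, but it takes a genuinely different route from the paper's. The paper proves the first convergence by a direct $L^1$ comparison of the \emph{full} partial sums: it bounds
$\E{\left|\sum_{m=1}^n (Y_m-C_m^{(n)})\log\bigl(f(x^m)\bigr)\right|}$ term by term using the coupling estimate $\E{|C_m^{(n)}-Y_m|}\leq \frac{2}{n+1}$ and the geometric bound $|\log f(x^m)|\leq \beta(1+\delta)r^m$, so the whole difference is $O(1/n)$ and tends to $0$; combined with the a.s.\ convergence of $\sum_m Y_m\log f(x^m)$ (already secured in lemma~\ref{lem_g_infty_holo}), Slutsky gives the claim, and the second convergence follows by continuity of $\exp$, exactly as you say. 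You instead truncate at a level $b$, use the last bullet of theorem~\ref{thm_feller_conv} for the head and the bound $C_m^{(n)}\leq Y_m+\mathbf{1}_{B_m^{(n)}}$ with $\Pb{B_m^{(n)}}=\frac{1}{n+1}$ for a tail estimate uniform in $n$, and patch with a $3\eps$ argument. This is valid (provided $b\geq m_0$ so the logarithm bound applies), and it is arguably more robust, since it only needs the exact-coincidence probability and first moments rather than the quantitative $L^1$ coupling rate; the paper's route is shorter because the $\frac{2}{n+1}$ bound lets one skip the truncation entirely. One small correction: your claim that the head sum ``converges a.s.'' to $\sum_{m=1}^{b}Y_m\log f(x^m)$ is not justified --- the events $\{(C_1^{(n)},\dots,C_b^{(n)})=(Y_1,\dots,Y_b)\}$ have probability tending to $1$ but need not eventually occur for a.s.\ every $\xi$ (indeed lemma~\ref{lem_properties_of_feller} notes that $C_m^{(n)}$ does \emph{not} converge a.s.\ to $Y_m$); what you get is convergence in probability, which is all your $3\eps$/Slutsky step requires, so the proof goes through after this rewording.
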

\textbf{Remark}: While the function $\sum_{m=1}^n C_m^{(n)}\log\bigl(f(x^m)\bigr)$ is not guaranteed to be holomorphic in $x$, it is well-defined with the convention $\log(-y):=\log(y)+i\pi$.

\begin{proof}
Since the exponential map is continuous, the second part follows immediately from the first part (a proof of this fact can be found in \cite{Billingsley}.)
We know from theorem~\ref{thm_feller_conv} that
\begin{align}
\E{\left|C_m^{(n)}-Y_m\right|}\leq \frac{2}{n+1}.
\end{align}
We use $\delta, \beta$ and $h$ as in the proof of lemma~\ref{lem_g_infty_holo}
to get
\begin{multline*}
\E{\left|\sum_{m=1}^n (Y_m-C_m^{(n)})\log\bigl(f(x^m)\bigr)\right|}
\leq
\sum_{m=1}^n\E{|Y_m-C_m^{(n)}|}|\log\bigl(f(x^m)\bigr)|\\
\leq
\sum_{m=1}^n \frac{2}{n+1}|\log\bigl(1+x^m h(x^m)\bigr)|
\leq
 \frac{2}{n+1}\left(C+\sum_{m=k_0}^n \beta (1+\delta) r^m\right)
\longrightarrow
0 \quad (n\rw\infty).
\end{multline*}
\end{proof}

Weak convergence does not automatically imply  convergence of the expectation. One need some additional properties. We introduce therefore
\begin{definition}
A sequence of (complex valued) random variables $(X_m)_{m\in\Nr}$ is called uniformly integrable if
$$\sup_{n\in\Nr} \E{|X_n|\mathbf{1}_{|X_n|>c}}\longrightarrow 0 \text{ for } c\rw \infty.$$
\end{definition}
One can now use
\begin{lemma}
[see \cite{gut}]
\label{lem_uniform integrabel}
Let $(X_m)_{m\in\Nr}$ be uniformly integrable and assume that $X_n\xrightarrow{d} X$. Then,
$$\E{X_n} \longrightarrow \E{X}.$$
\end{lemma}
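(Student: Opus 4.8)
The plan is to reduce the convergence $\E{X_n}\rw\E{X}$ to the definition of weak convergence by a truncation argument, with the truncation error controlled uniformly in $n$ by the uniform integrability hypothesis.

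First I would record two easy consequences of uniform integrability. Choosing $c_0$ with $\sup_n\E{|X_n|\mathbf{1}_{|X_n|>c_0}}\leq 1$ gives $\sup_n\E{|X_n|}\leq c_0+1<\infty$. Next, the map $y\mapsto |y|\wedge c$ is bounded and continuous, so $X_n\xrightarrow{d}X$ yields $\E{|X_n|\wedge c}\rw\E{|X|\wedge c}$, hence $\E{|X|\wedge c}\leq\sup_n\E{|X_n|}$ for every $c$; letting $c\rw\infty$ by monotone convergence shows $X\in L^1$, and consequently $\E{|X|\mathbf{1}_{|X|>c}}\rw 0$ as $c\rw\infty$ by dominated convergence. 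This preliminary step is worth isolating, since integrability of $X$ does not follow from weak convergence alone.

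Then I would introduce the truncation $\phi_c:\C\rw\C$ given by $\phi_c(y):=y$ if $|y|\leq c$ and $\phi_c(y):=cy/|y|$ if $|y|>c$. This function is bounded and continuous, and $|y-\phi_c(y)|=(|y|-c)^{+}\leq |y|\mathbf{1}_{|y|>c}$. Writing
$$|\E{X_n}-\E{X}|\leq \E{|X_n-\phi_c(X_n)|}+\bigl|\E{\phi_c(X_n)}-\E{\phi_c(X)}\bigr|+\E{|X-\phi_c(X)|},$$
the first term is $\leq\sup_n\E{|X_n|\mathbf{1}_{|X_n|>c}}=:\eps(c)$, the third is $\leq\E{|X|\mathbf{1}_{|X|>c}}$, and for each fixed $c$ the middle term tends to $0$ as $n\rw\infty$ because $\phi_c$ is bounded and continuous and $X_n\xrightarrow{d}X$ (applied to real and imaginary parts, viewing $\C\cong\Rr^2$). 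Taking $\limsup_{n\rw\infty}$ gives $\limsup_n|\E{X_n}-\E{X}|\leq\eps(c)+\E{|X|\mathbf{1}_{|X|>c}}$ for every $c$, and letting $c\rw\infty$ completes the argument.

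The only real subtlety is the order of the limits: one must send $n\rw\infty$ first with $c$ held fixed, and only afterwards let $c\rw\infty$; the uniform integrability is exactly what makes the residual term $\eps(c)$ vanish uniformly. A slicker alternative would be to invoke Skorokhod's representation theorem to replace $(X_n)_n$ and $X$ by a.s.-convergent copies on a common space and then apply the Vitali convergence theorem, but I would prefer the elementary truncation argument above as it is self-contained and does not require any additional machinery.
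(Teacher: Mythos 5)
Your proof is correct. Note that the paper does not prove this lemma at all: it is stated with a pointer to the reference \cite{gut}, where the result (uniform integrability plus convergence in distribution implies convergence of expectations, together with integrability of the limit) is proved by essentially the same device you use. Your write-up is the standard textbook argument, and it is complete: you correctly isolate the one genuinely non-trivial preliminary, namely that $X\in L^1$ does not follow from weak convergence alone but does follow once you combine the bounded continuous test functions $|y|\wedge c$ with the uniform bound $\sup_n\E{|X_n|}<\infty$ supplied by uniform integrability; the truncation map $\phi_c$ is bounded and continuous, the error terms are controlled uniformly in $n$ by $\eps(c)$ and $\E{|X|\mathbf{1}_{|X|>c}}$, and the order of limits ($n\rw\infty$ first, then $c\rw\infty$) is handled correctly, as is the reduction of the complex-valued case to $\Rr^2$-valued weak convergence. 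So you have supplied a self-contained proof of exactly the fact the paper delegates to the literature; the Skorokhod--Vitali alternative you mention would also work, but your elementary route is preferable in this context.
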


We now finish the proof of theorem~\ref{thm_conv_x<1_holo}.
We define $h$ and $\delta$ as in the proof of lemma~\ref{lem_g_infty_holo} and get
$$|W^{1,\infty}(f)(x)|=\prod_{m=1}^\infty |f(x^m)|^{Y_m}\leq \prod_{m=1}^\infty (1+\delta r^m)^{Y_m}=:\widetilde{F}(r).$$
It is possible that $C_m^{(n)}=Y_m+1$ and so $\widetilde{F}(r)$ is not automatically an upper bound for $W^{1,n}(f)$, but $F(r):= \prod_{m=1}^\infty (1+\delta r^m)^{Y_m+1}$ is. We now have
$|W^{1,n}(f)|\leq F(r)$ and $\E{|W^{1,n}(f)|}\leq \E{F(r)}.$
We get with this $F(r)$
$$
\sup_{n\in\Nr} \E{ \left| W^{1,n}(f)(x)\right| \mathbf{1}_{\left| W^{1,n}(f)(x)\right|>c}}
\leq
\E{\left|F(r)\right|\mathbf{1}_{F(r)>c}} \longrightarrow 0 \qquad (c\rw\infty)
$$
The sequence $ \Bigl(W^{1,n}(f)(x)\Bigr)_{n\in\Nr}$ is therefore uniformly integrable.
Lemmas~\ref{lem_Zn rw Z_infty} and \ref{lem_uniform integrabel} together prove theorem~\ref{thm_conv_x<1_holo} for $x\in B_{u}(0)$ with $u$ as in lemma~\ref{lem_Zn rw Z_infty}. $\E{W^{1,n}(f)(x)}$ and $\E{W^{1,\infty}(f)(x)}$ are holomorphic in $B_r(0)$ and bounded by $\E{F(r)}$. Therefore theorem~\ref{thm_conv_x<1_holo} is true for all $x\in B_r(0)$.\qed
\subsubsection{Second proof of theorem~\ref{thm_conv_x<1_holo}}
We first prove a special case.
\begin{lemma}
\label{lem_conv_holo_reel}
Assume that $b_k\in\Rr$ for $1\leq k<\infty$ and choose $0<r'<r$ such that either $f(x)\leq 1$ on the interval  $x\in [0,r']$ or $f(x)\geq 1$. We then have for $x\in [0,r']$
$$\E{W^{1,n}(f)(x)}\rw \E{f_\infty(x)}.$$
\end{lemma}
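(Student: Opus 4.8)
The plan is to run the whole argument on the Feller coupling space of section~\ref{sec_feller2}, on which, for $\sigma\in\mathcal{C}_\la$ with $|\la|=n$, both $W^{1,n}(f)(x)=\prod_{m=1}^{n}f(x^m)^{C_m^{(n)}}$ and the candidate limit $f_\infty(x)=\prod_{m=1}^{\infty}f(x^m)^{Y_m}$ are realized simultaneously, so that they can be compared factor by factor. Recall that by lemma~\ref{lem_reduction_to_1} and the reductions of section~\ref{sec_easy_facts} we may take $b_0=1$, $\theta\equiv\vth\equiv1$ and $f$ of one variable; write $f(x)=1+xh(x)$ with $h$ holomorphic on $B_r(0)$, put $\delta:=\sup_{B_r(0)}|h|$ and $M:=\sup_{[0,r']}f\ (\ge 1)$, and fix $m_0$ with $|x^mh(x^m)|<1$ for all $m\ge m_0$, $x\in[0,r']$. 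Shrinking $r'$ if necessary we also arrange $f>0$ on $[0,r']$; the hypothesis then reads $1\le f(x^m)\le 1+\delta r^m$ for $m\ge m_0$ (case $f\ge1$) or $1-\delta r^m\le f(x^m)\le1$ (case $f\le1$).

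The one input from the coupling that drives everything is the bound $C_m^{(n)}\le Y_m+\mathbf{1}_{B_m^{(n)}}$ of lemma~\ref{lem_properties_of_feller}, together with the fact that for fixed $n$ at most one of the events $B_m^{(n)}$ occurs: hence $C_m^{(n)}\le Y_m+1$ for all $m$, and $\prod_m f(x^m)^{\mathbf{1}_{B_m^{(n)}}}\le M$. In the case $f\ge1$ this gives a domination $W^{1,n}(f)(x)\le M\,f_\infty(x)$ \emph{valid for every $n$ at once}, with $\E{f_\infty(x)}<\infty$ by lemma~\ref{lem_E(g_infty)_holo_and_value}; in the case $f\le1$ one simply has $0<W^{1,n}(f)(x)\le1$.

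Now fix a cutoff $b\ge m_0$ and split $W^{1,n}(f)(x)=P_b^{(n)}Q_b^{(n)}$ with $P_b^{(n)}:=\prod_{m\le b}f(x^m)^{C_m^{(n)}}$ and $Q_b^{(n)}:=\prod_{b<m\le n}f(x^m)^{C_m^{(n)}}$, and likewise $f_\infty(x)=P_b^{\infty}Q_b^{\infty}$. In the case $f\ge1$ one has $1\le Q_b^{(n)}\le G_b:=\prod_{m>b}(1+\delta r^m)^{Y_m+1}$, so that for every $n$
\begin{equation*}
0\le W^{1,n}(f)(x)-P_b^{(n)}=W^{1,n}(f)(x)\bigl(1-1/Q_b^{(n)}\bigr)\le M\,f_\infty(x)\bigl(1-1/G_b\bigr),
\end{equation*}
and the same estimate bounds $\E{f_\infty(x)}-\E{P_b^{\infty}}$. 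Since $G_b\downarrow1$ almost surely as $b\to\infty$ (because $\sum_m(Y_m+1)r^m<\infty$ a.s., as in the proof of lemma~\ref{lem_g_infty_holo}) and $f_\infty(x)\in L^1$, dominated convergence yields $\E{M f_\infty(x)(1-1/G_b)}\to0$, \emph{uniformly in $n$}. For $b$ fixed, theorem~\ref{thm_feller_conv} gives $(C_1^{(n)},\dots,C_b^{(n)})\to(Y_1,\dots,Y_b)$ in probability, hence $P_b^{(n)}\to P_b^{\infty}$ in probability; and $P_b^{(n)}\le M^{b}\prod_{m\le b}M^{Y_m}$, an integrable bound independent of $n$, so $(P_b^{(n)})_n$ is uniformly integrable and $\E{P_b^{(n)}}\to\E{P_b^{\infty}}$ by lemma~\ref{lem_uniform integrabel}. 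An $\eps/3$ argument — first choose $b$ to make the two tail terms small, then let $n\to\infty$ to absorb $|\E{P_b^{(n)}}-\E{P_b^{\infty}}|$ — gives $\E{W^{1,n}(f)(x)}\to\E{f_\infty(x)}$. The case $f\le1$ is identical, bounding $Q_b^{(n)}$ below by $\prod_{m>b}(1-\delta r^m)^{Y_m+1}$ and above by $1$; it is in fact easier, since there $W^{1,n}(f)(x)\le1$ already dominates.

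The main obstacle is the uniform-in-$n$ control of the tail $m>b$: the random variables $C_m^{(n)}$ are not independent, so the product cannot be truncated by an independence argument, and the head $P_b^{(n)}$ by itself carries no information about the tail of $W^{1,n}(f)(x)$. The coupling inequality $C_m^{(n)}\le Y_m+1$ is exactly what transports the tail estimate from the \emph{fixed}, integrable object $f_\infty(x)$ back onto $W^{1,n}(f)(x)$ for all $n$ simultaneously, and this transport is legitimate only because it is monotone — which is precisely where the hypothesis that $f-1$ keeps a constant sign on $[0,r']$ enters.
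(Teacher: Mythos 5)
Your argument is correct, and it takes a genuinely different route from the paper's, even though both run on the Feller coupling. The paper proves the lemma as a limsup/liminf sandwich, treated asymmetrically in the two cases: for $f\le 1$ the upper bound comes from monotone truncation of the product at a fixed $m_0$, while the lower bound uses the exact decomposition $W^{1,n}(f)(x)=W^{1,n}(f)(x)\mathbf{1}_{B_0}+\sum_{k=1}^n f(x^k)\,W^{1,n-k}(f)(x)\mathbf{1}_{B_k}$ according to which spacing is cut off at position $n$, followed by the Schwarz inequality, $\Pb{B_k}=\frac{1}{n+1}$ and the existence of \emph{second} moments of $W^{1,\infty}(f)$ (step 4 of lemma~\ref{lem_E(g_infty)_holo_and_value}); the case $f\ge1$ is then obtained by exchanging inequalities and checking \eqref{eq_compare_g_m0} via $C_m^{(n)}\le Y_m+1$. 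You instead make a single two-sided truncation argument: the pathwise bound $C_m^{(n)}\le Y_m+\mathbf{1}_{B_m^{(n)}}$, with at most one $B_m^{(n)}$ occurring, gives a majorant ($M f_\infty(x)$, resp.\ the constant $1$) that is integrable and \emph{independent of $n$}, so the tail error $|W^{1,n}(f)(x)-P_b^{(n)}|$ is dominated uniformly in $n$ by $M f_\infty(x)\bigl(1-1/G_b\bigr)$, which goes to $0$ in $L^1$ as $b\to\infty$ by dominated convergence, while the head converges by theorem~\ref{thm_feller_conv} together with lemma~\ref{lem_uniform integrabel}. This buys a symmetric treatment of both cases, dispenses with the $B_k$-decomposition identity and with Cauchy--Schwarz (you only need $\E{f_\infty(x)}<\infty$ and first moments of finitely many $M^{Y_m}$, not all moments), and isolates exactly where the sign hypothesis is used; the paper's identity is sharper information about the coupling and is the kind of tool one would need if no global integrable majorant were at hand. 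Two small points to tidy: choose $m_0$ so that $\delta r^{m_0}<1$ (as in the proof of lemma~\ref{lem_g_infty_holo}), so that your lower bound $\prod_{m>b}(1-\delta r^m)^{Y_m+1}$ in the case $f\le1$ is strictly positive; and be aware that shrinking $r'$ to force $f>0$ means you prove the convergence only on the smaller interval --- this is harmless here, since lemma~\ref{lem_conv_holo_complex} only needs convergence on some nondegenerate real interval, and the shrinking is in fact what excludes values of $f$ below $-1$, where the bound $W^{1,n}(f)(x)\le1$ (used implicitly in the paper's proof as well) would fail.
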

\textbf{Remark}: We can choose such an $r'$ because $f$ is holomorphic.
\begin{proof}
\hfill
\begin{description}
\item[{Case $f(x)\leq 1$ for $x \in [0,r'] $} ] \hfill
\begin{description}
\item[Inequality $\leq$]
Choose $m_0\in\Nr$ arbitrary. We have for $n\geq m_0$
$$\E{\prod_{m=1}^n f(x^m)^{C_m^{(n)}}}
\leq \E{\prod_{m=1}^{m_0} f(x^m)^{C_m^{(n)}}}
$$
We know that $(C_1^{(n)},\cdots, C_{m_0}^{(n)}) \xrightarrow{d} (Y_1,\cdots, Y_{m_0})$ and $\prod_{m=1}^{m_0} f(x^m)^{C_m}\leq 1$.
This is enough to give
$$\E{\prod_{m=1}^{m_0} f(x^m)^{C_m^{(n)}}} \rw \E{\prod_{m=1}^{m_0} f(x^m)^{Y_m}} \text{ for }n\rw\infty.$$
Therefore,
$$\limsup_{n\rw \infty} \E{\prod_{m=1}^n f(x^m)^{C_m^{(n)}}}
\leq \inf_{m_0\in\Nr} \E{\prod_{m=1}^{m_0} f(x^m)^{Y_m}}
=\E{f_\infty(x)}$$
\item[Inequality $\geq$]
We need in this case the Feller coupling. If we would have always $C_m^{(n)}\leq Y_m$ then we would have no problem.
We have in fact $C_m^{(n)}\leq Y_m$ if $\xi_{n+1}=1$. We use therefore a small trick.
Remember the definition
$B_k=B_k^{(n)}=\set{\xi_{n-k}\cdots\xi_{n+1}=100\cdots 0}$.
We write
\begin{align*}
W^{1,n}(f)(x)
&=
\prod_{m=1}^n f(x^m)^{C_m^{(n)}}
=
\left(\sum_{k=0}^n  \textbf{1}_{B_k}\right)\prod_{m=1}^n f(x^m)^{C_m^{(n)}}\\
&=
\prod_{m=1}^n f(x^m)^{C_m^{(n)}}\textbf{1}_{B_0}+\sum_{k=1}^n f(x^k)\prod_{m=1}^n f(x^m)^{C_m^{(n-k)}}\textbf{1}_{B_k}\\
&=
W^{1,n}(f)(x)\textbf{1}_{B_0}+
\sum_{k=1}^n f(x^k) W^{1,n-k}(f)(x)\textbf{1}_{B_k}.
\end{align*}
Let $0<\eps<1$ be arbitrary and fixed. Since $f(0)=1$ there exists a $k_0$ such that $1-\eps<f(x^k)\leq1$ for $k\geq k_0$. We get
\begin{align*}
\E{W^{1,n}(f)}
&=\E{W^{1,n}(f)(x)\textbf{1}_{B_0}} +
\E{\sum_{k=1}^n f(x^k) W^{1,n-k}(f)(x)\textbf{1}_{B_k}}\\
&\geq
\E{W^{1,\infty}(f)(x)\textbf{1}_{B_0}} +
\E{\sum_{k=1}^n f(x^k) W^{1,\infty}(f)(x)\textbf{1}_{B_k}}\\
&\geq
\E{W^{1,\infty}(f)(x)\textbf{1}_{B_0}} +
\E{\sum_{k=1}^{k_0} f(x^k) W^{1,\infty}(f)(x)\textbf{1}_{B_k}}\\
&+\E{\sum_{k=k_0+1}^n (1-\eps) W^{1,\infty}(f)(x)\textbf{1}_{B_k}}.
\end{align*}
We have that $\Pb{B_k}=\frac{1}{n+1}$, $f(x^k)$ is bounded for $1\leq k\leq k_0$ and all moments of $W^{1,\infty}(f)$ exist.
We can now apply the Schwarz inequality (for $\mathrm{L^2}$) to see that the first two summands go to $0$. We can replace $k_0$ by $0$ in the third summand by the same argument.
\end{description}
\item[{Case $f(x)\geq 1$ for $x\in [0,r']$}]
The arguments are almost the same. We have to exchange all $\leq$ and $\geq$ and check that
\begin{align}\label{eq_compare_g_m0}
\E{\prod_{m=1}^{m_0} f(x^m)^{C_m^{(n)}}}
\longrightarrow
\E{\prod_{m=1}^{m_0} f(x^m)^{Y_m}}
\end{align}
Since $C_m^{(n)}\leq Y_m+1$, we have a common (integrable) upper bound. We also know from theorem~\ref{thm_feller_conv} that
$$\Pb{(C_1^{(n)},\cdots, C_{m_0}^{(n)})\neq(Y_1,\cdots ,Y_{m_0})}\rw 0\ (n\rw\infty)$$
These two facts together prove \eqref{eq_compare_g_m0}.
\end{description}
\end{proof}

We now extend lemma~\ref{lem_conv_holo_reel} to arbitrary $x$ and $b_k$.
We have constructed in the proof of lemma~\ref{lem_conv_holo_reel} an lower and an upper bound for $\E{W^{1,n}(f)(x)}$ and showed that they converge to the same limit as $n\rw\infty$. One could try to modify this proof to apply it to general $x$ and $b_k$, but this is rather difficult. It is easier to use the theorem of Montel (see \cite{fritzsche}).
\begin{lemma}
\label{lem_conv_holo_complex}
We have for any holomorphic function $f$ and $x\in B_r(0)$
$$\E{W^{1,n}(f)(x)}\rw \E{f_\infty(x)}\text{ for }n\rw\infty.$$
\end{lemma}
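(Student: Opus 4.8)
The plan is to deduce the general case from the real case established in lemma~\ref{lem_conv_holo_reel} by an analyticity (normal families) argument. First I would fix $r'>0$ small enough that $f(x)\neq 0$ on $B_{r'}(0)$ and such that on the real interval $[0,r']$ the function $f$ stays on one side of $1$ (possible since $f$ is holomorphic with $f(0)=1$; if $b_k\in\Rr$ this is literally the hypothesis of lemma~\ref{lem_conv_holo_reel}, and in general one can still arrange $|f(x)-1|$ small on $B_{r'}(0)$, which is all we actually use below). On $B_{r'}(0)$ both $\E{W^{1,n}(f)(x)}$ and $\E{f_\infty(x)}$ are holomorphic functions of $x$ — the latter by lemma~\ref{lem_E(g_infty)_holo_and_value}, the former because $W^{1,n}(f)(x)$ is a finite product of holomorphic functions and the expectation over the finite group $\mathcal{S}_n$ (equivalently, the finite sum in~\eqref{E_f_class_n}) preserves holomorphicity.

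The key point is a uniform bound. Using the $h,\delta$ of lemma~\ref{lem_g_infty_holo}, write $f(x)=1+xh(x)$ and note $|f(x^m)|\le 1+\delta r^m$ for $x\in B_r(0)$, so
$$
|W^{1,n}(f)(x)| \le \prod_{m=1}^n (1+\delta r^m)^{C_m^{(n)}} \le \prod_{m=1}^\infty (1+\delta r^m)^{Y_m+\mathbf{1}_{B_m^{(n)}}} \le F(r),
$$
by lemma~\ref{lem_properties_of_feller}, and $\E{F(r)}<\infty$ by the computation in step~1 of lemma~\ref{lem_E(g_infty)_holo_and_value} (the extra factor $\prod_m(1+\delta r^m)$ is bounded). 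Hence $\bigl|\E{W^{1,n}(f)(x)}\bigr|\le \E{F(r)}$ uniformly in $n$ and in $x\in B_r(0)$. Therefore the family $\set{x\mapsto \E{W^{1,n}(f)(x)}}_{n\in\Nr}$ is locally uniformly bounded on $B_r(0)$, so by Montel's theorem it is a normal family: every subsequence has a subsequence converging locally uniformly to some holomorphic limit.

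To finish: any such locally uniform subsequential limit $g(x)$ agrees, by lemma~\ref{lem_conv_holo_reel} applied with real coefficients — or, when $b_k\notin\Rr$, by first splitting $f$ into the contributions with real and imaginary coefficients, or simply by invoking the first proof's lemma~\ref{lem_Zn rw Z_infty} together with uniform integrability at real arguments — with $\E{f_\infty(x)}$ on the real segment $[0,r']$. Since $[0,r']$ has an accumulation point in $B_r(0)$, the identity theorem forces $g\equiv \E{f_\infty(\cdot)}$ on $B_{r}(0)$. As every subsequential limit is the same function, the full sequence $\E{W^{1,n}(f)(x)}$ converges to $\E{f_\infty(x)}$, locally uniformly on $B_r(0)$ and in particular pointwise, which is the claim. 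The main obstacle is the bookkeeping needed to reduce to a situation where lemma~\ref{lem_conv_holo_reel} literally applies — i.e.\ making sure one really has convergence on a subset of $B_r(0)$ with an accumulation point, for general complex $b_k$; once that anchor is in place, Montel plus the identity theorem do the rest routinely.
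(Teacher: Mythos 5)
Your Montel-plus-identity-theorem scheme with the uniform bound $\E{F(r)}$ is exactly right for the case of real coefficients, and there it reproduces the paper's Step 1. The genuine gap is in how you anchor the argument when the $b_k$ are complex. Your first fallback, ``splitting $f$ into the contributions with real and imaginary coefficients,'' does not work: $W^{1,n}(f)=\prod_m f(x^{\la_m})^{C_m^{(n)}}$ is multiplicative, not linear, in $f$, so $\E{W^{1,n}(f)}$ cannot be reassembled from $\E{W^{1,n}(f_R)}$ and $\E{W^{1,n}(f_I)}$ (the cross terms put different functions on different cycles, and $f_I(0)=0$ even violates the normalization). Likewise, your remark that ``$|f(x)-1|$ small is all we actually use'' misreads lemma~\ref{lem_conv_holo_reel}: its sandwich argument genuinely needs $f$ to be \emph{real-valued} and on one side of $1$ on the segment $[0,r']$ (monotonicity in the exponents $C_m^{(n)}$, positivity of the factors), so for complex $b_k$ that lemma simply does not give you convergence on any real segment, and your identity-theorem step has no anchor. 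Your second fallback -- invoking lemma~\ref{lem_Zn rw Z_infty} plus uniform integrability -- is valid mathematics, but it is precisely the paper's \emph{first} proof, which already handles complex coefficients on all of $B_u(0)$; importing it makes lemma~\ref{lem_conv_holo_reel} and your real-segment anchor superfluous, so it is not a repair of this argument but a replacement of it.

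The paper closes the complex case differently, with an interpolation in an auxiliary parameter: set $b_k(s):=\Re(b_k)+s\,\Im(b_k)$ and $f^{(s)}(x):=1+\sum_k b_k(s)x^k$. Both $\E{W^{1,n}(f^{(s)})(x)}$ and $\E{W^{1,\infty}(f^{(s)})(x)}$ are holomorphic in $(x,s)$ and locally uniformly bounded; for $s\in\Rr$ the coefficients are real, so Step 1 (your Montel argument with the real anchor) gives convergence for all $x\in B_r(0)$ and real $s$; then Montel and the identity theorem applied in the $s$-variable upgrade this to all complex $s$, and evaluating at $s=i$ recovers $f$. This is the missing idea you would need to make your proposal self-contained within the second-proof strategy.
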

\begin{proof}\hfill
\begin{description}
\item[Step 1]
We show first that lemma~\ref{lem_conv_holo_reel} is true for arbitrary $x\in B_r(0)$ and all $b_k\in\Rr$ (i.e.~with no condition that either $f(x)\ge1$ or $f(x) \leq 1$ on a whole interval).
We use the $F(r)$ from the first proof. We apply the theorem of Montel with the upper bound $\E{F(r)}$ for $\E{W^{1,n}(f)(x)}$.\\
Suppose that there exists a $x_0\in B_r(0)$ where $\E{W^{1,n}(f)(x_0)}\nrightarrow \E{W^{1,\infty}(f)(x_0)}$. Then there exists a $\eps>0$ and a sequence $\Lambda\subset\Nr$ with $|\E{W^{1,n}(f)(x_0)}-\E{W^{1,\infty}(f)(x_0)}|>\eps$ for $n\in \Lambda$.
We apply the theorem of Montel and get a subsequence $\Lambda'\subset \Lambda$ and a holomorphic function $h$ with
$$\E{W^{1,n}(f)(x)}\longrightarrow h(x) \text{ for }x\in B_r(0)\text{ and }n\in\Lambda'.$$
We know form lemma~\ref{lem_conv_holo_reel} that $h$ has to agree with $\E{f_\infty(x)}$ on $[0,r']$. This is a contradiction, since $\E{f_\infty(x_0)}\neq h(x_0)$.
\item[Step 2]
We now prove the lemma. Let $b_k$ be arbitrary. We define $b_k(s):=\Re(b_k)+s\Im(b_k)$ and
$$f^{(s)}(x):=1+\sum_{k=1}^\infty b_k(s) x^k.$$
It follows as in lemmas~\ref{lem_g_infty_holo} and \ref{lem_E(g_infty)_holo_and_value} that $W^{1,\infty}\left(f^{(s)}\right)(x)$ and $\E{W^{1,\infty}\left(f^{(s)}\right)(x)}$ are holomorphic functions in $x$ and $s$.
We know from step 1 that
$$\E{W^{1,n}\left(f^{(s)}\right)(x)}\longrightarrow \E{W^{1,\infty}\left(f^{(s)}\right)(x)}\qquad \text{ for } x\in B_r(0), s\in\Rr.$$
We use as in step 1 the theorem of Montel and get
$$\E{W^{1,n}\left(f^{(s)}\right)(x)}\longrightarrow \E{W^{1,\infty}\left(f^{(s)}\right)(x)}\qquad \text{ for } x\in B_r(0), s\in\C.$$
We now put $s=i$ and are done.
\end{description}
\end{proof}
Putting lemmas~\ref{lem_E(g_infty)_holo_and_value} and \ref{lem_conv_holo_complex} together, we have proved that for any holomorphic function $f$ and $x\in B_r(0)$
$$\E{W^{1,n}(f)(x)}\rw \E{f_\infty(x)} = \prod_{k=1}^\infty \frac{1}{(1-x^k)^{b_k}},$$
i.e.~ the special case of theorem~\ref{thm_conv_x<1_holo} in just one variable and when $b_{0,0}=1$. By lemma~\ref{lem_reduction_to_1}, this is enough to prove the full theorem (in one variable).%
\subsection{Proof of theorem~\ref{thm_conv_x<1_holo} for $W^2$}
\label{sec_Remarks W^2}
The proof of theorem~\ref{thm_conv_x<1_holo} for $W^2$ is almost the same.
One only has to replace $W^{1,\infty}(f)(x)$ with
\begin{align}\label{eq_w2_infty_g_Ym}
W^{2,\infty}(f)(x):=\prod_{m=1}^{\infty}f(x^m,\alpha_1^m,\alpha_2^m,\cdots)^{Y_m}.
\end{align}

\section{Other Groups}
\label{sec_other_groups}
We can also use the techniques of this paper  for some other groups than $\mathcal{S}_n$.
These are the alternating group and the Weyl groups of classical groups.
We do not give here the definition of a Weyl group, since this would go too far and can be found in many books about Lie groups, for instance in \cite{bump}.
We will only give a presentation of the group and write down the generating functions.
The asymptotic behaviour follows directly form theorem~\ref{thm_conv_x<1_poly} and theorem~\ref{thm_conv_x<1_holo}.

\subsection{The alternating group $\mathcal{A}_n$}
\label{sec_alternating group}
It is natural to ask if we can use the techniques of section \ref{sec_wreath_prod_poly} to obtain generating functions for  subgroups of $\mathcal{S}_n$, and section \ref{sec_wreath_prod_poly} is based on \eqref{E_f_class_n}. Since this formula is only true for class functions on
$\mathcal{S}_n$, the possible subgroups have to be normal. Therefore the only candidate is the alternating group $\mathcal{A}_n$.

\subsubsection{Definitions}
\begin{definition}
A $\sigma\in \mathcal{S}_n$ is called \emph{even} if $\sigma$ can be written as an even number of transpositions. Otherwise
$\sigma$  is called \emph{odd}. The \emph{alternating group} $\mathcal{A}_n$ is the subset of $\mathcal{S}_n$ of all even permutations. The signature $\eps(\sigma)$ of a permutation is 1 for even permutations, -1 for odd ones.
\end{definition}
\begin{lemma}
 The signature $\eps$ is a group homomorphism and $\mathcal{A}_n = \ker(\eps)$.
\end{lemma}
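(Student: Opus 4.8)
The plan is to first establish that $\eps$ is well-defined, then that it is multiplicative, and finally to read off the kernel statement. The only genuinely delicate point is well-definedness: a permutation can be written as a product of transpositions in many ways, so one must check that the parity of the number of transpositions used does not depend on the chosen factorization. First I would recall that every $\sigma \in \mathcal{S}_n$ is indeed a product of transpositions (decompose $\sigma$ into disjoint cycles, then write an $l$-cycle $(a_1\, a_2\, \cdots\, a_l)$ as $(a_1\, a_l)(a_1\, a_{l-1})\cdots(a_1\, a_2)$, a product of $l-1$ transpositions). This also shows concretely that $\eps(\sigma) = (-1)^{n - l(\la)}$ where $\la$ is the cycle-type of $\sigma$ and $l(\la)$ its length, which is convenient because it is manifestly a function of $\sigma$ alone.

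The cleanest route to well-definedness is the standard Vandermonde/discriminant argument: act with $\mathcal{S}_n$ on the polynomial $\Delta := \prod_{1 \le i < j \le n}(x_i - x_j)$ by permuting indices, and observe that a single transposition sends $\Delta$ to $-\Delta$. Hence if $\sigma = \tau_1 \cdots \tau_k$ with each $\tau_i$ a transposition, the action of $\sigma$ multiplies $\Delta$ by $(-1)^k$; since the left-hand side depends only on $\sigma$, the parity of $k$ does too, and we may define $\eps(\sigma) = (-1)^k$ unambiguously. Alternatively, one can argue directly that multiplying a permutation by a transposition changes $n - l(\la)$ by $\pm 1$ (a transposition either merges two cycles into one or splits one cycle into two), which gives the same conclusion together with the cycle-type formula above; I would mention this as the more combinatorial alternative since the rest of the paper is phrased in terms of cycle-types.

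Once $\eps$ is well-defined, multiplicativity is immediate: if $\sigma$ is a product of $k$ transpositions and $\rho$ a product of $k'$ transpositions, then $\sigma\rho$ is a product of $k + k'$ transpositions, so $\eps(\sigma\rho) = (-1)^{k+k'} = \eps(\sigma)\eps(\rho)$; thus $\eps \colon \mathcal{S}_n \to \{\pm 1\}$ is a group homomorphism. Finally, by the very definition given in the excerpt, $\sigma$ is even (i.e.\ $\eps(\sigma) = 1$) precisely when $\sigma \in \mathcal{A}_n$, so $\mathcal{A}_n = \eps^{-1}(\{1\}) = \ker(\eps)$, which also exhibits $\mathcal{A}_n$ as a normal subgroup of index $2$ in $\mathcal{S}_n$ (for $n \ge 2$, where $\eps$ is surjective). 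The main obstacle, as noted, is purely the well-definedness step; everything after it is a one-line formal verification.
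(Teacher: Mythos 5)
Your proof is correct and complete. The paper itself states this lemma without proof, treating it as a standard fact about the symmetric group, so there is no argument in the paper to compare yours against; your route (well-definedness of the sign via the action on the Vandermonde product $\prod_{i<j}(x_i-x_j)$, or equivalently via the observation that multiplying by a transposition changes $n-l(\la)$ by exactly one, followed by the immediate multiplicativity and the kernel identification) is the standard one and fills the gap correctly. Your cycle-type formula $\eps(\sigma)=(-1)^{n-l(\la)}$ is also consistent with the expression $\eps(\sigma)=\prod_{m=1}^{l(\la)}(-1)^{\la_m+1}$ that the paper uses later in lemma~\ref{lem_An=Sn+epsSn}, so the combinatorial variant you mention dovetails well with how the paper actually uses the signature.
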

\begin{definition}
\label{def_haar_An}
We write $\EAn{f}$ for the expectation with respect to the Haar-measure $\mu_{\mathcal{A}_n}$ on $\mathcal{A}_n$.
Explicitly we have for $n\geq2$ (only, because $\mathcal{S}_1=\mathcal{A}_1=\set{1}$)
\begin{align}\label{eq_def_E_An}
\EAn{f}=\frac{2}{n!} \sum_{\sigma\in \mathcal{A}_n} f(\sigma).
\end{align}
\end{definition}
\subsubsection{Generating functions for $W^1$ and $W^2$ on $\mathcal{A}_n$}

We prove in this subsection
\begin{theorem}
\label{thm_gen_fn_poly_An}
Let $\theta$ and $\vth$ be random variables with values in $S^1$ and $P$ be a polynomial with
$$P(x_1,x_2)=\sum_{k_1,k_2=0}^{\infty}b_{k_1,k_2} x_1^{k_1} x_2^{k_2}. $$
We define as in \eqref{eq_def_alpha}
$\alpha_{k_1,k_2}:=\E{\theta^{k_1}\vth^{k_2}}$.
We have for $n\geq 2$
\begin{multline}\label{eq_thm_gen_W1_poly_An}
\EAn{W^1(P)(x_1,x_2)}
\\=
\nth{\prod_{k_1,k_2=0}^\infty (1-x_1^{k_1} x_2^{k_2}t)^{-b_{k_1,k_2} \alpha_{k_1,k_2}} }
+
\nth{\prod_{k_1,k_2=0}^\infty (1+x_1^{k_1} x_2^{k_2}t)^{-b_{k_1,k_2} \alpha_{k_1,k_2}} }
\end{multline}
\begin{multline}\label{eq_thm_gen_W2_poly_An}
\EAn{W^2(P)(x_1,x_2)}
\\=
\nth{\prod_{k_1,k_2=0}^\infty (1-\alpha_{k_1,k_2} x_1^{k_1} x_2^{k_2}t)^{-b_{k_1,k_2}} }
+
\nth{\prod_{k_1,k_2=0}^\infty (1+\alpha_{k_1,k_2} x_1^{k_1} x_2^{k_2}t)^{-b_{k_1,k_2}} }
\end{multline}
The products in \eqref{eq_thm_gen_W1_poly_An}, \eqref{eq_thm_gen_W2_poly_An} are holomorphic for $|t|<1$ and $\max\set{|x_1|,|x_2|}\leq 1$.
\end{theorem}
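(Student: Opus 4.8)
The plan is to reduce the computation of $\EAn{W^j(P)}$ to the already-proven $\mathcal{S}_n$ statement (Theorem~\ref{thm_gen_fn_poly}) by expressing the indicator of $\mathcal{A}_n$ in terms of the signature. Concretely, for any class function $h$ on $\mathcal{S}_n$ (which $W^j(P)(x_1,x_2)$ is, after taking expectations over the $\theta_m,\vth_m$) one has
\begin{align*}
\EAn{h}=\frac{2}{n!}\sum_{\sigma\in\mathcal{S}_n}h(\sigma)\mathbf{1}_{\eps(\sigma)=1}
=\frac{2}{n!}\sum_{\sigma\in\mathcal{S}_n}h(\sigma)\frac{1+\eps(\sigma)}{2}
=\En{h}+\En{\eps\cdot h}.
\end{align*}
So the first step is to record this identity (valid for $n\ge2$, where $|\mathcal{A}_n|=n!/2$), and note that $\eps\cdot h$ is again a class function since $\eps$ is a class function (constant on conjugacy classes, being a homomorphism to an abelian group). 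The first summand $\En{W^j(P)}$ is handled verbatim by Theorem~\ref{thm_gen_fn_poly}, contributing the first $\nth{\cdots}$ term on the RHS of \eqref{eq_thm_gen_W1_poly_An} and \eqref{eq_thm_gen_W2_poly_An}.

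The second step is to compute the generating function of $\En{\eps\cdot W^j(P)}$. The key combinatorial input is the formula for the signature in terms of cycle-type: a cycle of length $\ell$ is a product of $\ell-1$ transpositions, so for $\sigma\in\mathcal{C}_\la$ we have $\eps(\sigma)=\prod_{m=1}^{l(\la)}(-1)^{\la_m-1}=(-1)^{n-l(\la)}$. I would then run exactly the proof of Theorem~\ref{thm_gen_fn_poly} (i.e.~apply \eqref{E_f_class_n} and Lemma~\ref{lem_gen_poly}/Lemma~\ref{lem_symm_fn}), but now the summand carries an extra factor $\eps(\la)=\prod_m(-1)^{\la_m-1}$. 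This is precisely absorbed into Lemma~\ref{lem_symm_fn} by replacing the sequence $a_m=f(x_1^m,x_2^m)$ with $a_m=(-1)^{m-1}f(x_1^m,x_2^m)$; equivalently, the substitution $t\mapsto -t$ combined with an overall sign, since $\eps(\la)t^{|\la|}=-(-t)^{|\la|}$ does not quite work directly — rather $(-1)^{\la_m-1}=-(-1)^{\la_m}$ per part, so $a_\la t^{|\la|}\eps(\la)=(-1)^{l(\la)}a_\la(-t)^{|\la|}$. Tracking this through Lemma~\ref{lem_symm_fn}: $\sum_\la\frac1{z_\la}a_\la\eps(\la)t^{|\la|}=\exp\big(-\sum_m\frac1m a_m(-t)^m\big)$, and then following the chain of equalities in the proof of Lemma~\ref{lem_gen_poly} with $-t$ in place of $t$ turns each factor $(1-x_1^{k_1}x_2^{k_2}t)^{-b_{k_1,k_2}\alpha_{k_1,k_2}}$ into $(1+x_1^{k_1}x_2^{k_2}t)^{-b_{k_1,k_2}\alpha_{k_1,k_2}}$ (the extra global $\exp$ sign matches the $z_\la$-normalization; this is the one computation to do carefully). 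For $W^2$ the same substitution applies to the several-variables version of Lemma~\ref{lem_gen_poly} used in the proof of \eqref{eq_thm_gen_W2_poly}, yielding the $(1+\alpha_{k_1,k_2}x_1^{k_1}x_2^{k_2}t)$ factors.

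The third step is bookkeeping: add the two pieces to obtain \eqref{eq_thm_gen_W1_poly_An} and \eqref{eq_thm_gen_W2_poly_An}, and note the holomorphicity claim for $|t|<1$, $\max\{|x_1|,|x_2|\}\le1$ follows because each of the two terms is, by Theorem~\ref{thm_gen_fn_poly} applied with $t$ and with $-t$ respectively (both products being finite products of functions holomorphic on that domain, since $P$ is a polynomial). I expect the main obstacle to be purely notational rather than conceptual: getting the sign of the global exponential factor right when pulling $\eps(\la)=\prod_m(-1)^{\la_m-1}$ through the $\frac1{z_\la}$-weighted exponential formula, in particular making sure the per-part sign $(-1)^{-1}$ versus the per-cycle-count sign $(-1)^{l(\la)}$ are reconciled correctly so that the net effect is exactly $t\mapsto-t$ inside each Euler factor with no leftover sign. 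Everything else is a transcription of the proof of Theorem~\ref{thm_gen_fn_poly}.
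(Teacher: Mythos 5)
Your route is the paper's own: the splitting $\EAn{h}=\En{h}+\En{\eps\cdot h}$ is exactly the paper's Lemma~\ref{lem_An=Sn+epsSn}, the first summand is Theorem~\ref{thm_gen_fn_poly}, and the second summand is computed, just as you propose, by feeding the per-cycle sign $(-1)^{\la_m+1}$ into Lemma~\ref{lem_symm_fn} (the paper does this directly with $a_m=(-1)^{m+1}P(x^m)$ rather than phrasing it as a substitution $t\mapsto-t$). So there is no methodological difference to report.

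The gap sits precisely at the step you yourself flagged as ``the one computation to do carefully'', and it resolves the other way from what you assert. Your intermediate identity
\[
\sla a_\la\,\eps(\la)\,t^{|\la|}=\exp\Bigl(-\sum_{m=1}^\infty\tfrac{1}{m}\,a_m(-t)^m\Bigr)
\]
is correct, but it is \emph{not} ``$t\mapsto-t$ with no leftover sign'': running the chain of Lemma~\ref{lem_gen_poly} at $-t$ gives $\exp\bigl(\sum_m\tfrac1m a_m(-t)^m\bigr)=\prod_{k_1,k_2}(1+x_1^{k_1}x_2^{k_2}t)^{-b_{k_1,k_2}\alpha_{k_1,k_2}}$, so the extra global minus in your exponent inverts the whole product, and the second generating function comes out as $\prod_{k_1,k_2}(1+x_1^{k_1}x_2^{k_2}t)^{+b_{k_1,k_2}\alpha_{k_1,k_2}}$ (and $(1+\alpha_{k_1,k_2}x_1^{k_1}x_2^{k_2}t)^{+b_{k_1,k_2}}$ for $W^2$); the exponent flips sign instead of staying $-b_{k_1,k_2}\alpha_{k_1,k_2}$. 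A quick check makes this concrete: take $P(x)=1-x$, $\theta\equiv\vth\equiv1$, $n=2$. Then $\mathbb{E}_{\mathcal{A}_2}\bigl[Z_2(x)\bigr]=(1-x)^2$, whereas the coefficient of $t^2$ in $(1-t)^{-1}(1-xt)$ plus the coefficient of $t^2$ in $(1+t)^{-1}(1+xt)$ equals $(1-x)+(1-x)=2(1-x)$; with the flipped exponent one instead gets $(1-x)+(x^2-x)=(1-x)^2$, as required. Be aware that the paper's own proof commits the same slip, in the equality $\exp\bigl(-\sum_m\tfrac{(-1)^m}{m}\sum_k b_kx^{km}t^m\bigr)=\exp\bigl(-\sum_k b_k\log(1+x^kt)\bigr)$: since $\sum_{m\ge1}\tfrac{(-1)^m}{m}y^m=-\log(1+y)$, the signs cancel and the product should be $\prod_k(1+x^kt)^{+b_k}$, so the exponents in the second products of \eqref{eq_thm_gen_W1_poly_An} and \eqref{eq_thm_gen_W2_poly_An} should read $+b_{k_1,k_2}\alpha_{k_1,k_2}$ and $+b_{k_1,k_2}$. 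In short: same method as the paper, but the deferred sign must be tracked to the end, and doing so corrects both your final formula and the stated theorem.
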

The idea of the proof is to reformulate $\EAn{\cdot}$ and to use the results of section \ref{sec_wreath_prod_poly}.
We start with
\begin{lemma}
\label{lem_An=Sn+epsSn}
We have for each $f:\mathcal{S}_n\rw \C$
\begin{align}\label{eq_An=Sn+epsSn}
\EAn{f|_{\mathcal{A}_n}}=\En{f}+\En{\eps\cdot f} \text{ for }n\geq2
\end{align}
and $\eps(\sigma) = \prod_{m=1}^{l(\la)}(-1)^{\la_m+1}$ for $\sigma\in \mathcal{C}_\la$.
\end{lemma}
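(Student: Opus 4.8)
The plan is to establish the two assertions of the lemma separately, both by elementary arguments. For the identity \eqref{eq_An=Sn+epsSn}, the starting point is that for $n\geq 2$ the map $\eps$ is surjective onto $\set{+1,-1}$ (any transposition is odd), so $\mathcal{A}_n=\ker(\eps)$ has index $2$ in $\mathcal{S}_n$ and $\abs{\mathcal{A}_n}=n!/2$; this is precisely what forces the restriction $n\geq 2$ and the constant $2$ in \eqref{eq_def_E_An}. Hence $\sigma\mapsto\tfrac12(1+\eps(\sigma))$ is the indicator function of $\mathcal{A}_n$ inside $\mathcal{S}_n$. I would substitute this into the definition of $\EAn{\cdot}$ and split the resulting sum:
\[
\EAn{f|_{\mathcal{A}_n}}
=\frac{2}{n!}\sum_{\sigma\in\mathcal{S}_n}f(\sigma)\,\frac{1+\eps(\sigma)}{2}
=\frac{1}{n!}\sum_{\sigma\in\mathcal{S}_n}f(\sigma)+\frac{1}{n!}\sum_{\sigma\in\mathcal{S}_n}\eps(\sigma)f(\sigma),
\]
which is exactly $\En{f}+\En{\eps\cdot f}$ by Definition~\ref{def_haar_allg}.

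For the product formula for $\eps(\sigma)$, I would use that $\eps$ is a group homomorphism (recorded in the lemma just above) together with the standard factorization of a single cycle of length $\ell$ into $\ell-1$ transpositions, e.g.\ $(a_1\,a_2\cdots a_\ell)=(a_1\,a_\ell)(a_1\,a_{\ell-1})\cdots(a_1\,a_2)$, so that such a cycle has signature $(-1)^{\ell-1}=(-1)^{\ell+1}$. Writing $\sigma=\sigma_1\cdots\sigma_{l(\la)}$ as a product of disjoint cycles of lengths $\la_1,\dots,\la_{l(\la)}$ and applying multiplicativity of $\eps$ yields $\eps(\sigma)=\prod_{m=1}^{l(\la)}(-1)^{\la_m+1}$; in particular this shows $\eps$ is a class function, consistent with the fact that the right-hand side depends only on the cycle-type $\la$ (so that $\eps\cdot f$ is again a class function whenever $f$ is, which is what makes \eqref{eq_An=Sn+epsSn} useful together with \eqref{E_f_class_n}).

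I do not expect any real obstacle here: the content is entirely the bookkeeping above. The only point deserving an explicit word is the index computation $[\mathcal{S}_n:\mathcal{A}_n]=2$ for $n\geq 2$, which both legitimizes the factor $2$ in \eqref{eq_def_E_An} and explains why $n=1$ must be excluded from the statement.
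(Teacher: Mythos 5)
Your proof is correct and matches the paper's argument: both rewrite $\EAn{f|_{\mathcal{A}_n}}$ as $\frac{1}{n!}\sum_{\sigma\in\mathcal{S}_n}\bigl((1+\eps)f\bigr)(\sigma)$ via the indicator $\tfrac12(1+\eps)$ of $\mathcal{A}_n$ and split the sum. The cycle-signature formula, which the paper dismisses as trivial, you justify by the standard factorization of an $\ell$-cycle into $\ell-1$ transpositions, which is exactly the intended reasoning.
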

\begin{proof}
We have for $n\geq2$
$$
\EAn{f|_{\mathcal{A}_n}}
=
\frac{2}{n!}\sum_{\substack{\sigma\in \mathcal{S}_n\\\eps(\sigma)=1}} f(\sigma)
=
\frac{1}{n!}\sum_{\sigma\in \mathcal{S}_n} \bigl((1+\eps)f\bigr)(\sigma)
=
\En{f}+\En{\eps\cdot f}
$$
This proves \eqref{eq_An=Sn+epsSn}. The second statement is trivial.
\end{proof}
We can now prove theorem \ref{thm_gen_fn_poly_An}:
\begin{proof}[Proof of theorem \ref{thm_gen_fn_poly_An}]
We calculate $\sum_{n=0}^\infty \En{\eps W^1(P)}t^n$ and use lemma \ref{lem_An=Sn+epsSn}.
This calculation is very similar to the calculations in the proof of theorem \ref{thm_gen_fn_poly}.
We therefore simplify the proof by assuming $\theta\equiv\vth\equiv1$ and that $P$ is only dependent on one variable.
We get
\begin{multline*}
\sum_{n=0}^\infty \En{\eps W^1(P)(x)}t^n
=
\sla \prod_{m=1}^{l(\la)}(-1)^{\la_m+1}P(x^{\la_m})t^{|\la|}
\\=
\exp\left(\sum_{m=1}^\infty \frac{1}{m}(-1)^{m+1}P(x^m)t^m\right)
=
\exp\left(-\sum_{m=1}^\infty\frac{1}{m}(-1)^{m}\sum_{k=0}^\infty b_k x^{km}t^m\right)
=\\
\exp\left(-\sum_{k=0}^\infty b_k \log(1+x^kt)\right)
=
\prod_{k=0}^\infty (1+x^kt)^{-b_k}
\end{multline*}
\end{proof}

\subsection{The Weyl group of $SO(2n)$}
\label{sec_so(2n)}
Let $D$  be the set of diagonal matrices with diagonal entries $1,-1$.
The Weyl group $\mathcal{W}_n$ of $SO(2n)$ is equal to $D\mathcal{S}_n$. We define $Z_{\mathcal{W}_n}(x)(w):=\det(I-xw)$ for $w \in\mathcal{W}_n $.
We have to take a closer look at $\mu_{\mathcal{W}_n}$ to write down a generating function for the moments of $Z_{\mathcal{W}_n}(x)$.
Any element $w$ of $\mathcal{W}_n$ can be written uniquely as $w=dg$ with $d\in D,g\in \mathcal{S}_n$.
Therefore $\mu_{\mathcal{W}_n}=\mu_D\times\mu_{\mathcal{S}_n}$ and the diagonal matrices are independent of $\mathcal{S}_n$.
A simple calculation shows that the diagonal entries $d_i$ in $D$ are iid with $\Pb{d_i=1}=\Pb{d_i=-1}=\frac{1}{2}$.
This is precisely the definition of $W^2$ in \eqref{eq_def_W2_Zn_det}. We use theorem~\ref{thm_gen_fn_poly} and the example in section~\ref{ex_discrete} to get
\begin{align*}
\E{Z_{\mathcal{W}_n}^{s_1}(x_1)Z_{\mathcal{W}_n}^{s_2}(x_2)}
=\nth{
\prod_{\substack{k_1,k_2=0\\2|(k_1-k_2)}}^\infty (1-x_1^{k_1} x_2^{k_2} t)^{\binom{s_1}{k_1}\binom{s_2}{k_2}(-1)^{k_1+k_2+1}}}
\end{align*}

\subsection{The Weyl group of $SO(2n+1)$}
\label{sec_so(2n+1)}
Let $D$ be as above. The Weyl group $\mathcal{W}'_n$ of $SO(2n+1)$ is equal to $D\mathcal{A}_n$. We can argue as above and get with theorem~\ref{thm_gen_fn_poly_An}

\begin{multline*}
\E{Z_{\mathcal{W}'_n}^{s_1}(x_1)Z_{\mathcal{W}'_n}^{s_2}(x_2)}
=
\nth{\prod_{\substack{k_1,k_2=0\\2|(k_1-k_2)}}^\infty (1-x_1^{k_1} x_2^{k_2} t)^{\binom{s_1}{k_1}\binom{s_2}{k_2}(-1)^{k_1+k_2+1}}}
+\\
\nth{\prod_{\substack{k_1,k_2=0\\2|(k_1-k_2)}}^\infty (1+x_1^{k_1} x_2^{k_2} t)^{\binom{s_1}{k_1}\binom{s_2}{k_2}(-1)^{k_1+k_2+1}}}
\end{multline*}

\subsection{The Weyl group of $SU(n)$}
The Weyl group $\widetilde{\mathcal{W}}_n$ of $SU(n)$ is equal to $\mathcal{S}_n$, shrunk to the subspace $\mathcal{T}=\set{(x_1,\cdots,x_n)\in\C^n;\sum x_i=0}$.
It is easy to see that $\C^n \cong \mathcal{T}\oplus \C(1,1,\cdots,1)$ and the action of $\mathcal{S}_n$ on $\C(1,1,\cdots,1)$ is trivial.
Thus
\begin{align*}
\E{Z_{\widetilde{\mathcal{W}}_n}^{s_1}(x_1)Z_{\widetilde{\mathcal{W}}_n}^{s_2}(x_2)}
=
\nth{\frac{\prod_{k_1=0}^{s_1}\prod_{k_2=0}^{s_2}\left(1-x_1^{k_1}x_2^{k_2}t\right)^{\binom{s_1}{k_1}\binom{s_2}{k_2}(-1)^{k_1+k_2+1}}}
{(1-x_1)^{s_1}(1-x_2)^{s_2}}}
\end{align*}

\section{Further Questions}
%
We have proven several results in this paper, but there are a few questions left to consider.
\begin{itemize}
\item 
We have focused in this paper on the case $|x_i|<1$, but the generating functions in theorem~\ref{thm_gen_fn_poly} and theorem~\ref{thm_gen_holo} are also valid for $|x_i|=1$. This situation was already studied by the second author in \cite{DZ}, but only for for $f(x)=(1-x)^s$ with $s\in\Nr$ and $|x|=1$, $x$ not a root of unity. A partial fraction decomposition sufficed there to calculate the behaviour of $\En{Z_n^s(x)}$ for $n\rw\infty$. We cannot argue now in the same way since the generating functions are not rational functions anymore. One can instead use theorem VI.5 in \cite{FlSe09}. The problem is that this only works for generating functions associated to polynomials. It thus remains to determine the behaviour of $\En{W^j(f)(x)}$ for $n\rw\infty$ and $|x|=1$, $x$ not a root of unity.
\item Let $f(x) := 1/(1-x)$. It follows from theorem~\ref{thm_conv_x<1_holo} that
    $$
    \lim_{n\rw\infty} \En{W^j(f)(x)}
    =
    \prod_{k=1}^\infty \frac{1}{1-x^k}
    $$
    We put $x=e^{2\pi i \tau}$ with $\tau\in\mathcal{H}=\set{z\in\C; \Im(z)>0}$ and see that the product on the RHS is up to a factor $e^{2\pi i/24}$ the Dedekind eta function. It is now thus natural to look for functional equations satisfied by asymptotic expressions for coefficients of the generating functions associated to other $f$s.
\item The main result in \cite{HKOS} is that the real and the imaginary part of $\frac{\log\bigl(Z_n(x)\bigr)}{\sqrt{\frac{\pi}{12}\log(n)}}$ converges in distribution to normal distributed random variable (for $|x|=1$, $x$ not a root of unity). Does there exist a similar limit theorem for $W^{1,n}(f)$?
\end{itemize}

\thanks{
The authors wish to acknowledge Joseph Najnudel and Ashkan Nikeghbali for encouragements,  stimulating discussions and freely sharing their work in \cite{NN}.}

\bibliographystyle{plain}
\bibliography{wreath_product}

\end{document}